\documentclass[a4paper,11pt]{article}

\usepackage{authblk}
\usepackage{parskip}
\usepackage{mathtools}
\usepackage{amssymb}
\usepackage{amsmath}
\usepackage{latexsym}
\usepackage{mathrsfs}  % example: \mathscr{A} 
\usepackage{bbm}       % example: \mathbbm{N}
\usepackage{amsthm}    % Theorems
\usepackage{relsize}   % Change default sizes 
\usepackage{graphicx}
\usepackage{thmtools}  % Modify theorem style
\usepackage{mathrsfs}  % Ralph Smith’s Formal Script Font (rsfs)
\usepackage{paralist}  % list on paragraph
\usepackage{lipsum}    % drawing horizontal lines
\usepackage[all]{xy}   % To draw diagrams 

\numberwithin{equation}{section}

\setlength{\textwidth}{16.5cm}
\setlength{\hoffset}{-2cm}
\setlength{\textheight}{23cm}
\setlength{\voffset}{-1cm}

\setlength{\parindent}{16pt}
\addtolength{\parskip}{3mm}

%%%%%%%%%%%%%%%%%%%%%%%%% TITLES %%%%%%%%%%%%%%%%%%%%%%%%%%%%%%%%%%%%%%%%%%%

\usepackage{titlesec}   %Control what is at the format of section titles

\titleformat{\section}[block]{\bfseries\filcenter}% centred title 
{{\upshape\thesection\enspace}}{.5em}{}

%\scshape 
\titleformat{\subsection}[block]{\filcenter}% upshape means that is not in italics
{{\upshape\thesubsection\enspace}}{.5em}{} %%^: These reduce the amount of space taken up by section and subsection

%%%%%%%%%%%%%%%%%%%%%%%%%%%%%%%% LIST %%%%%%%%%%%%%%%%%%%%%%%%%%%%%%

\usepackage{enumitem}  % Modify list
\setlist{nosep}  % nosep= no vertical separation between items.
\setitemize[0]{leftmargin=*}
\setenumerate[0]{leftmargin=*}
\setenumerate[1]{label={(\arabic*)}} % Global setting of enumeration of the form (1), (2), ..
%\usepackage[pointlessenum]{paralist} %%^: This removes the trailing periods from enumerate lists.

%%%%%%%%%%%%%%%%%%%%%%%%%%%%%% COMMANDS %%%%%%%%%%%%%%%%%%%%%%%%%%%%%%%%%%%%%

\newcommand{\N}{\mathbb{N}}     % Natural numbers
     % Integer numbers
\newcommand{\Q}{\mathbb{Q}}     % Rational numbers
\newcommand{\I}{\mathbb{I}}     % Irrational numbers
\newcommand{\R}{\mathbb{R}}     % Real numbers
\newcommand{\C}{\mathbb{C}}     % Complex numbers
     % Generic field. Usually R or C
\newcommand{\Prob}{\mathbb{P}}  % Probability measure
\newcommand{\Exp}{\mathbb{E}}   % Expectation 
\newcommand{\st}{\,:\,}         % Symbol for "such that"
\newcommand{\goth}[1]{\mathfrak{#1}} % Gothic letters 
\newcommand{\ind}[2]{\mathbbm{1}_{#1}\left( #2 \right)}          % Indicator function, #1=set, #2=variable
\newcommand{\inner}[2]{\left\langle #1 \, , \, #2 \right\rangle} % Inner product
\newcommand{\norm}[1]{\left|\left|#1\right|\right|}              % Vector norm
\newcommand{\triplet}[3]{\left( #1, #2, #3 \right) }             % General triplet e.g. a probability space
\newcommand{\ProbSpace}{\triplet{\Omega}{\mathscr{F}}{\Prob}}    % Triplet of a Probability Space
 % Convergence almost surely
 % Convergence in probability
 % Weak Convergence of measures
                       % Characteristics of Infinitely Divisible Distributions  
\newcommand{\abs}[1]{\left| #1 \right|}                          % Absolute value  
 % operator angle bracket symbol
\renewcommand{\qedsymbol}{$\square$}                       % For a black square at the end of a proof
\newcommand{\Levy}{L\'{e}vy} 
\newcommand{\cadlag}{c\`{a}dl\`{a}g }
\newcommand{\Ito}{It\^{o}}
\newcommand{\defeq}{\mathrel{\mathop:}=}                         % Defined as equal to symbol
\newcommand\restr[2]{{% we make the whole thing an ordinary symbol
  \left.\kern-\nulldelimiterspace % automatically resize the bar with \right
  #1 % the function
  \vphantom{\big|} % pretend it's a little taller at normal size
  \right|_{#2} % this is the delimiter
  }}
  
%%%%%%%%%%%%%%%%%%%%%%%%%%%%% THEOREMS AND EQUATIONS %%%%%%%%%%%%%%%%%%%%%%%%%%%%%%%%%%

\theoremstyle{plain} 
\newtheorem{theo}{Theorem}[section]    
\newtheorem{prop}[theo]{Proposition} 
\newtheorem{coro}[theo]{Corollary}
\newtheorem{lemm}[theo]{Lemma}
\newtheorem{assu}[theo]{Assumption}
\newtheorem{rema}[theo]{Remark}

\theoremstyle{definition} 
\newtheorem{defi}[theo]{Definition}

\newtheorem{nota}[theo]{Notation}

\declaretheoremstyle[%
  spaceabove=-5pt,%
  spacebelow=6pt,%
  headfont=\normalfont\itshape,%
  postheadspace=1em,%
  qed=\qedsymbol%
]{mystyle} 
\declaretheorem[name={Proof},style=mystyle,unnumbered,
]{prf}

 \title{L\'{e}vy Processes and Infinitely Divisible Measures in the Dual of a Nuclear Space.}
\author{C. A. Fonseca-Mora}
\affil{  Escuela de Matem\'{a}tica, Universidad de Costa Rica, San Jos\'{e}, 11501-2060, Costa Rica. \\

\noindent E-mail:  christianandres.fonseca@ucr.ac.cr }

\date{}

\begin{document}

 \maketitle

\abstract{Let $\Phi$ be a nuclear space and let $\Phi'_{\beta}$ denote its strong dual. In this work we prove the existence of c\`{a}dl\`{a}g versions, the L\'{e}vy-It\^{o} decomposition, and the L\'{e}vy-Khintchine formula for $\Phi'_{\beta}$-valued L\'{e}vy processes. Moreover, we give a characterization for L\'{e}vy measures on $\Phi'_{\beta}$ and provide conditions for the existence of regular versions to cylindrical L\'{e}vy processes in $\Phi'$. Furthermore, under the assumption that $\Phi$ is a barrelled nuclear space we establish a one-to-one correspondence between infinitely divisible measures on $\Phi'_{\beta}$ and L\'{e}vy processes in $\Phi'_{\beta}$. Finally, we prove the L\'{e}vy-Khintchine formula for infinitely divisible measures on $\Phi'_{\beta}$.}

\smallskip

\emph{2010 Mathematics Subject Classification:} 60B11, 60G51, 60E07, 60G20. % 60G17, 28C20.

\emph{Key words and phrases:} L\'{e}vy processes, infinitely divisible measures, cylindrical L\'{e}vy processes, dual of a nuclear space, L\'{e}vy-It\^{o} decomposition, L\'{e}vy-Khintchine formula, L\'{e}vy measure.

\section{Introduction}

This work is concerned with the study of properties of L\'{e}vy processes (i.e. processes with independent and stationary increments) which take values in the strong dual $\Phi'_{\beta}$ of a (general) nuclear space $\Phi$. In particular, we are interested in the validity in the context of the dual of a nuclear space of some properties of the L\'{e}vy processes in $\R^{d}$ such as the existence of c\`{a}dl\`{a}g versions, the  L\'{e}vy-It\^{o} decomposition and the L\'{e}vy-Khintchine formula. We  are also interested in finding characterizations for the L\'{e}vy measures on $\Phi'_{\beta}$ and conditions for the existence of regular versions of cylindrical L\'{e}vy processes in $\Phi'$. Moreover, we intend to find conditions on the nuclear space $\Phi$ in order to guarantee the existence of a one-to-one correspondence between infinitely divisible measures on $\Phi'_{\beta}$ and L\'{e}vy processes taking values in $\Phi'_{\beta}$.   

In the case of the dual of a nuclear space, the study of L\'{e}vy processes has been mostly concentrated on the study of Wiener processes and stochastic analysis defined with respect to these processes (see e.g. \cite{BojdeckiJakubowski:1990, Ito, KallianpurXiong}). Indeed, to the extent of our knowledge the only previous works on the study of some properties of general L\'{e}vy processes in the dual of specific classes of nuclear spaces are \cite{Baumgartner:2015, PerezAbreuRochaArteagaTudor:2005, Ustunel:1984}.

On the other hand, although properties of infinitely divisible measures defined on locally convex spaces were explored by several authors (see e.g. \cite{Dettweiler:1976, Fernique:1967, Tortrat:1967, Tortrat:1969}), we are not aware of any previous work that studies the correspondence between L\'{e}vy processes and infinitely divisible measures on the dual of general nuclear spaces. In this article we prove this correspondence and the L\'{e}vy-Khintchine formula for infinitely divisible measures on $\Phi'_{\beta}$.

We start in Section \ref{sectionPrelim} with some preliminary results on nuclear spaces, cylindrical and stochastic processes and Radon measures on the dual of a nuclear space. Then in Section \ref{subSectionInfDivMeas} we use some results of Siebert \cite{Siebert:1974, Siebert:1976} to study the problem of embedding a given infinite divisible measure $\mu$ into a continuous convolution semigroup of  probability measures $\{ \mu_{t} \}_{t \geq 0}$ on $\Phi'_{\beta}$. Later, in Section \ref{subSectionLPCLP} by using the results in \cite{FonsecaMora:2018} that gives conditions for a cylindrical process to have a c\`{a}dl\`{a}g version (known as regularization theorems), we give conditions for the existence of a c\`{a}dl\`{a}g L\'{e}vy version of a given cylindrical L\'{e}vy process in $\Phi'$ or of a $\Phi'_{\beta}$-valued L\'{e}vy process. In particular we show that if the space $\Phi$ is nuclear and barrelled, then every L\'{e}vy process in $\Phi'_{\beta}$ has a c\`{a}dl\`{a}g version that is also a L\'{e}vy process. 
 
In Section \ref{subsectionCLPIDM} we proceed to prove the one-to-one correspondence between L\'{e}vy processes and infinitely divisible measures on $\Phi'_{\beta}$. Here it is important to remark that the standard argument to prove the correspondence that works in finite dimensions (see e.g. Chapter 2 in \cite{Sato}) does not work in our context as the Kolmogorov extension theorem is not applicable on the dual of a general nuclear space. To overcome this obstacle we use a projective system version of the Kolmogorov extension theorem (see \cite{RaoSPGT}, Theorem 1.3.4) to show a general theorem (that holds for Hausdorff locally convex spaces) that guarantees the existence of a cylindrical L\'{e}vy process $L$ whose cylindrical distributions extends for each time $t$ to the measure $\mu_{t}$ of the continuous convolution semigroup $\{ \mu_{t}\}_{t\geq 0}$ in which the given infinitely divisible measure $\mu$ can be embedded. Then, for this cylindrical process $L$ we use the results in Section \ref{subSectionLPCLP} to show the existence of a $\Phi'_{\beta}$-valued c\`{a}dl\`{a}g L\'{e}vy process $\tilde{L}$  that is a version of $L$, and hence the probability distribution of $\tilde{L}_{1}$ coincides with $\mu$ and then we have the correspondence. In Section \ref{subsectionWCPP} we review some properties of Wiener processes in $\Phi'_{\beta}$.   

After we study in Sections \ref{subsectionPRMPI} and  \ref{subSectionPMPILP} the basic properties of Poisson integrals defined by Poisson random measures on $\Phi'_{\beta}$, in Sections \ref{subSectionLM} and \ref{subSectionLMLP} we investigate the properties of the L\'{e}vy measures on $\Phi'_{\beta}$. In particular, we will show that a Borel measure $\nu$ on $\Phi'_{\beta}$ with $\nu(\{0\})=0$ is a L\'{e}vy measure on $\Phi'_{\beta}$ if and only if $\nu$ is a bounded Radon measure when restricted to the complement of every neighborhood of zero and if for the norm $\rho'$ of some Hilbert space $\Phi'_{\rho}$ continuously embedded in the dual space  $\Phi'_{\beta}$, we have   
$\int_{B_{\rho'}(1)} \rho'(f)^{2} \nu (df) < \infty$ and that $\nu$ is a bounded Radon measure when restricted to the complement of the unit ball $B_{\rho'}(1)$ of $\rho'$. Within the context of the dual of a nuclear space, the above characterization generalizes the characterization of L\'{e}vy measures obtained by Dettweiler in \cite{Dettweiler:1976} for the case of complete Badrikian spaces. 

Later, we proceed to prove in Section \ref{subsectionLID} the so-called L\'{e}vy-It\^{o} decomposition for the paths of a $\Phi'_{\beta}$-valued L\'{e}vy process. More specifically, we show that a $\Phi'_{\beta}$-valued L\'{e}vy process $L=\{ L_{t} \}_{t \geq 0}$ has a decomposition of the form (see Theorem \ref{levyItoDecompositionTheorem}):
$$L_{t}=t\goth{m}+W_{t}+\int_{B_{\rho'}(1)} f \widetilde{N} (t,df)+\int_{B_{\rho'}(1)^{c}} f N (t,df), \quad \forall t \geq 0,$$
where $\goth{m} \in \Phi'_{\beta}$, $\rho'$ is the norm associated to the L\'{e}vy measure $\nu$ of $L$ as described in the previous paragraph, $\{ W_{t} \}_{t \geq 0}$ is a Wiener process taking values in a Hilbert space continuously embedded in the dual space  $\Phi'_{\beta}$, $\left\{ \int_{B_{\rho'}(1)} f \widetilde{N} (t,df): t\geq 0 \right\}$ is a mean-zero, square integrable, c\`{a}dl\`{a}g L\'{e}vy process taking values in a Hilbert space continuously embedded in the dual space  $\Phi'_{\beta}$ and $\left\{ \int_{B_{\rho'}(1)^{c}} f N (t,df): t\geq 0 \right\}$ is a $\Phi'_{\beta}$-valued c\`{a}dl\`{a}g L\'{e}vy process defined by means of a Poisson integral with respect to the Poisson random measure $N$ of the L\'{e}vy process $L$.

Our L\'{e}vy-It\^{o} decomposition improves the decomposition proved by \"{U}st\"{u}nel in \cite{Ustunel:1984} in two directions. First, compared with the result in \cite{Ustunel:1984}, we have obtained a much simpler and more detailed characterization of the components of the decomposition. In particular, contrary to the decomposition in \cite{Ustunel:1984} we have been able to show the independence of all the random components in our decomposition. Second, for our decomposition we only assume that the space $\Phi$ is nuclear and we do not assume any property on the dual space $\Phi'_{\beta}$; this is in contrast to the decomposition in \cite{Ustunel:1984} where $\Phi$ is assumed to be separable, complete,  barreled, and nuclear, and $\Phi'_{\beta}$ is assumed to be  Suslin and nuclear. The same assumptions on $\Phi$ and $\Phi'_{\beta}$ are required for the decomposition obtained by Baumgartner in \cite{Baumgartner:2015} (at least in the context of duals of nuclear spaces). 

Next as a consequence of our proof of the L\'{e}vy-It\^{o} decomposition, we prove a L\'{e}vy-Khintchine formula for the characteristic function of a $\Phi'_{\beta}$-valued L\'{e}vy process (see Theorem \ref{levyKhintchineFormulaLevyProcessTheorem}). 
Moreover, by using the one-to-one correspondence between L\'{e}vy processes and infinitely divisible measures, in Section \ref{sectionLKTIDM} we prove the L\'{e}vy-Khintchine formula for the characteristic function of an infinitely divisible measure on $\Phi'_{\beta}$ (see Theorem \ref{levyKhintchineFormula}). More specifically, we prove that the characteristic function $\widehat{\mu}$ of an infinitely divisible measure $\mu$ on $\Phi'_{\beta}$ is of the form:
$$ \widehat{\mu}(\phi)=\exp \left[ i \goth{m}[\phi] - \frac{1}{2} \mathcal{Q}(\phi)^{2} + \int_{\Phi'_{\beta}} \left( e^{i f[\phi]} -1 - i f[\phi] \ind{ B_{\rho'}(1)}{f} \right) \nu(d f) \right], \quad \forall \phi \in \Phi,$$
where $\goth{m} \in \Phi'_{\beta}$, $\mathcal{Q}$ is a continuous Hilbertian semi-norm on $\Phi$, and $\nu$ is a L\'{e}vy measure on $\Phi'_{\beta}$ with corresponding Hilbertian norm $\rho'$. Here it is important to remark that our L\'{e}vy-Khintchine formula works in a case that is not covered by the formula proved by Dettweiler in \cite{Dettweiler:1976}. This is because our dual space is not assumed to be a complete Badrikian space as in \cite{Dettweiler:1976}. Finally, in Section \ref{sectionExampCommen}  we consider concrete examples of nuclear spaces, give some remarks, and compare our results with those on the literature.

The results obtained in this paper have extensive further applications. In particular, we have applied the  L\'{e}vy-It\^{o} decomposition to introduce stochastic integrals and to study stochastic evolution equations driven by L\'{e}vy noise in the dual of nuclear space (see \cite{FonsecaMora:2018SPDE}) and to characterize the semimartingales with deterministic characteristics (see \cite{FonsecaMora:Semi}), and we have applied the L\'{e}vy-Khintchine formula to the study of weak convergence of a sequence of L\'{e}vy processes (see \cite{FonsecaMora:Skorokhod}).  

\section{Preliminaries} \label{sectionPrelim}

\subsection{Nuclear Spaces And Their Strong Duals} \label{subsectionNuclSpace}

In this section we introduce our notation and review some of the key concepts on nuclear spaces and their dual spaces that we will need throughout this paper. For more information see \cite{Schaefer, Treves}.  

Let $\Phi$ be a locally convex space (over $\R$ or $\C$). If each bounded and closed subset of $\Phi$ is complete, then $\Phi$ is said to be \emph{quasi-complete}. On the other hand, the space $\Phi$ is called a \emph{barrelled} space if every  convex, balanced, absorbing and closed subset of $\Phi$ (i.e. a barrel) is a neighborhood of zero. 

If $p$ is a continuous semi-norm on $\Phi$ and $r>0$, the closed ball of radius $r$ of $p$ given by $B_{p}(r) = \left\{ \phi \in \Phi: p(\phi) \leq r \right\}$ is a closed, convex, balanced neighborhood of zero in $\Phi$. A continuous semi-norm (respectively a norm) $p$ on $\Phi$ is called \emph{Hilbertian} if $p(\phi)^{2}=Q(\phi,\phi)$, for all $\phi \in \Phi$, where $Q$ is a symmetric, non-negative bilinear form (respectively inner product) on $\Phi \times \Phi$. Let $\Phi_{p}$ be the Hilbert space that corresponds to the completion of the pre-Hilbert space $(\Phi / \mbox{ker}(p), \tilde{p})$, where $\tilde{p}(\phi+\mbox{ker}(p))=p(\phi)$ for each $\phi \in \Phi$. The quotient map $\Phi \rightarrow \Phi / \mbox{ker}(p)$ has a unique continuous linear extension that we denote by  $i_{p}:\Phi \rightarrow \Phi_{p}$.   

Let $q$ be another continuous Hilbertian semi-norm on $\Phi$ for which $p \leq q$. In this case, $\mbox{ker}(q) \subseteq \mbox{ker}(p)$. Moreover, the inclusion map from $\Phi / \mbox{ker}(q)$ into $\Phi / \mbox{ker}(p)$ is linear and continuous and it has a unique continuous linear extension that we denote by $i_{p,q}:\Phi_{q} \rightarrow \Phi_{p}$. Furthermore we have the following relation: $i_{p}=i_{p,q} \circ i_{q}$. 

We denote by $\Phi'$ the topological dual of $\Phi$ and by $f[\phi]$ the canonical pairing of elements $f \in \Phi'$, $\phi \in \Phi$. We denote by $\Phi'_{\beta}$ the dual space $\Phi'$ equipped with its \emph{strong topology} $\beta$, i.e. $\beta$ is the topology on $\Phi'$ generated by the family of semi-norms $\{ \eta_{B} \}$, where for each $B \subseteq \Phi$ bounded we have $\eta_{B}(f)=\sup \{ \abs{f[\phi]}: \phi \in B \}$ for all $f \in \Phi'$.  If $p$ is a continuous Hilbertian semi-norm on $\Phi$, then we denote by $\Phi'_{p}$ the Hilbert space dual to $\Phi_{p}$. The dual norm $p'$ on $\Phi'_{p}$ is given by $p'(f)=\sup \{ \abs{f[\phi]}:  \phi \in B_{p}(1) \}$ for all $ f \in \Phi'_{p}$. Moreover, the dual operator $i_{p}'$ corresponds to the canonical inclusion from $\Phi'_{p}$ into $\Phi'_{\beta}$ and it is linear and continuous. 

Let $p$ and $q$ be continuous Hilbertian semi-norms on $\Phi$ such that $p \leq q$.
The space of continuous linear operators (respectively Hilbert-Schmidt operators) from $\Phi_{q}$ into $\Phi_{p}$ is denoted by $\mathcal{L}(\Phi_{q},\Phi_{p})$ (respectively $\mathcal{L}_{2}(\Phi_{q},\Phi_{p})$) and the operator norm (respectively Hilbert-Schmidt norm) is denoted by $\norm{\cdot}_{\mathcal{L}(\Phi_{q},\Phi_{p})}$ (respectively $\norm{\cdot}_{\mathcal{L}_{2}(\Phi_{q},\Phi_{p})}$). We employ an analogous notation for operators between the dual spaces $\Phi'_{p}$ and $\Phi'_{q}$. 
  
Let us recall that a (Hausdorff) locally convex space $(\Phi,\mathcal{T})$ is called \emph{nuclear} if its topology $\mathcal{T}$ is generated by a family $\Pi$ of Hilbertian semi-norms such that for each $p \in \Pi$ there exists $q \in \Pi$, satisfying $p \leq q$ and the canonical inclusion $i_{p,q}: \Phi_{q} \rightarrow \Phi_{p}$ is Hilbert-Schmidt. Other equivalent definitions of nuclear spaces can be found in \cite{Pietsch, Treves}. Examples of nuclear spaces are given in Section \ref{sectionExampCommen}. 

Let $\Phi$ be a nuclear space. If $p$ is a continuous Hilbertian semi-norm  on $\Phi$, then the Hilbert space $\Phi_{p}$ is separable (see \cite{Pietsch}, Proposition 4.4.9 and Theorem 4.4.10, p.82). Now, let $\{ p_{n} \}_{n \in \N}$ be an increasing sequence of continuous Hilbertian semi-norms on $(\Phi,\mathcal{T})$. We denote by $\theta$ the locally convex topology on $\Phi$ generated by the family $\{ p_{n} \}_{n \in \N}$. The topology $\theta$ is weaker than $\mathcal{T}$. We  will call $\theta$ a weaker countably Hilbertian topology on $\Phi$ and we denote by $\Phi_{\theta}$ the space $(\Phi,\theta)$. The space $\Phi_{\theta}$ is a separable pseudo-metrizable (not necessarily Hausdorff) locally convex space and its dual space satisfies $\Phi'_{\theta}=\bigcup_{n \in \N} \Phi'_{p_{n}}$ (see \cite{FonsecaMora:2018}, Proposition 2.4). We denote the completion of $\Phi_{\theta}$ by  $\widetilde{\Phi_{\theta}}$ and its strong dual by $(\widetilde{\Phi_{\theta}})'_{\beta}$.

\subsection{Cylindrical and Stochastic Processes} \label{subSectionCylAndStocProcess}

Unless otherwise specified, in this section $\Phi$ will always denote a nuclear space over $\R$.
 
Let $\ProbSpace$ be a complete probability space. We denote by $L^{0} \ProbSpace$ the space of equivalence classes of real-valued random variables defined on $\ProbSpace$. We always consider the space $L^{0} \ProbSpace$ equipped with the topology of convergence in probability and in this case it is a complete, metrizable, topological vector space. 

For two Borel measures $\mu$ and $\nu$ on $\Phi'_{\beta}$, we denote by $\mu \ast \nu$ their \emph{convolution}. Recall that $\mu \ast \nu (A) = \int_{\Phi' \times \Phi'} \ind{A}{x+y} \mu(dx) \nu(dy)$, for any $A \in \mathcal{B}(\Phi'_{\beta})$. Denote $\nu^{\ast n}=\nu \ast \dots \ast \nu$ ($n$-times) and we use the convention $\nu^{0}=\delta_{0}$, where  $\delta_{f}$ denotes the Dirac measure on $\Phi'_{\beta}$ for $f \in \Phi'$. The restriction of a Borel measure $\nu$ on a set $A \in \mathcal{B}(\Phi'_{\beta})$ will be denoted by $\restr{\nu}{A}$. 

A Borel measure $\mu$ on $\Phi'_{\beta}$ is called a \emph{Radon measure} if for every $\Gamma \in \mathcal{B}(\Phi'_{\beta})$ and $\epsilon >0$, there exist a compact set $K_{\epsilon} \subseteq \Gamma$ such that $\mu(\Gamma \backslash K_{\epsilon}) < \epsilon$. In general not every Borel measure on $\Phi$ is Radon.
We denote by $\goth{M}_{R}^{b}(\Phi'_{\beta})$ and by $\goth{M}_{R}^{1}(\Phi'_{\beta})$ the spaces of all bounded Radon measures and of all Radon probability measures on $\Phi'_{\beta}$ respectively.  A subset $M \subseteq \goth{M}_{R}^{b}(\Phi'_{\beta})$ is called \emph{uniformly tight} if \begin{inparaenum}[(i)] 
\item $\sup \{ \mu(\Phi'_{\beta}) \st \mu \in M \} < \infty$, and \item for every $\epsilon >0$ there exist a compact $K \subseteq \Phi'_{\beta}$ such that $\mu (K^{c})< \epsilon$ for all $\mu \in M$. 
\end{inparaenum}
Also, a subset $M \subseteq \goth{M}_{R}^{b}(\Phi'_{\beta})$ is called \emph{shift tight} if for every $\mu \in M$ there exists $f_{\mu} \in \Phi'_{\beta}$ such that $\{ \mu \ast \delta_{f_{\mu}} : \mu \in M \}$ is uniformly tight. 

For any $n \in \N$ and any $\phi_{1}, \dots, \phi_{n} \in \Phi$, we define a linear map $\pi_{\phi_{1}, \dots, \phi_{n}}: \Phi' \rightarrow \R^{n}$ by
\begin{equation} \label{defiMapProjectionCylinder}
\pi_{\phi_{1}, \dots, \phi_{n}}(f)=(f[\phi_{1}], \dots, f[\phi_{n}]), \quad \forall \, f \in \Phi'. 
\end{equation}
The map $\pi_{\phi_{1}, \dots, \phi_{n}}$ is clearly linear and continuous. Let $M \subseteq \Phi$. A subset of $\Phi'$ of the form 
\begin{equation} \label{defiCylindricalSet}
\mathcal{Z}\left(\phi_{1}, \dots, \phi_{n}; A \right) = \left\{ f \in \Phi'\st \left(f[\phi_{1}], \dots, f[\phi_{n}]\right) \in A \right\}= \pi_{\phi_{1}, \dots, \phi_{n}}^{-1}(A)
\end{equation}
where $n \in \N$, $\phi_{1}, \dots, \phi_{n} \in M$ and $A \in \mathcal{B}\left(\R^{n}\right)$ is called a \emph{cylindrical set} based on $M$. The set of all the cylindrical sets based on $M$ is denoted by $\mathcal{Z}(\Phi',M)$. It is an algebra but if $M$ is a finite set then it is a $\sigma$-algebra. The $\sigma$-algebra generated by $\mathcal{Z}(\Phi',M)$ is denoted by $\mathcal{C}(\Phi',M)$ and it is called the \emph{cylindrical $\sigma$-algebra} with respect to $(\Phi',M)$. 
If $M=\Phi$, we write $\mathcal{Z}(\Phi')=\mathcal{Z}(\Phi',\Phi)$ and $\mathcal{C}(\Phi')=\mathcal{C}(\Phi',\Phi)$\index{.c CPhi@$\mathcal{C}(\Phi')$}. One can easily see from \eqref{defiCylindricalSet} that $\mathcal{Z}(\Phi') \subseteq \mathcal{B}(\Phi'_{\beta})$. Therefore, $\mathcal{C}(\Phi') \subseteq \mathcal{B}(\Phi'_{\beta})$. 
A function $\mu: \mathcal{Z}(\Phi') \rightarrow [0,\infty]$ is called a \emph{cylindrical measure} on $\Phi'$, if for each finite subset $M \subseteq \Phi'$ the restriction of $\mu$ to $\mathcal{C}(\Phi',M)$ is a measure. A cylindrical measure $\mu$ is said to be \emph{finite} if $\mu(\Phi')< \infty$ and a \emph{cylindrical probability measure} if $\mu(\Phi')=1$. The complex-valued function $\widehat{\mu}: \Phi \rightarrow \C$ defined by 
$$ \widehat{\mu}(\phi)= \int_{\Phi'} e^{i f[\phi]} \mu(df) = \int_{-\infty}^{\infty} e^{iz} \mu_{\phi}(dz), \quad \forall \, \phi \in \Phi, $$
where for each $\phi \in \Phi$, $\mu_{\phi} \defeq \mu \circ \pi_{\phi}^{-1}$, is called the \emph{characteristic function}\index{characteristic function} of $\mu$. In general a cylindrical measure on $\Phi'$ does not extend to a Borel measure on $\Phi'_{\beta}$. However, necessary and sufficient conditions for this can be given in terms of the continuity of its characteristic function by means of the Minlos theorem (see \cite{DaleckyFomin}, Theorem III.1.3, p.88).

A \emph{cylindrical random variable}\index{cylindrical random variable} in $\Phi'$ is a linear map $X: \Phi \rightarrow L^{0} \ProbSpace$. If $Z=\mathcal{Z}\left(\phi_{1}, \dots, \phi_{n}; A \right)$ is a cylindrical set, where $\phi_{1}, \dots, \phi_{n} \in \Phi$ and $A \in \mathcal{B}\left(\R^{n}\right)$, let 
\begin{equation*} 
\mu_{X}(Z) \defeq \Prob \left( ( X(\phi_{1}), \dots, X(\phi_{n})) \in A  \right) = \Prob \circ X^{-1} \circ \pi_{\phi_{1},\dots, \phi_{n}}^{-1}(A). 
\end{equation*}
The map $\mu_{X}$ is a cylindrical probability measure on $\Phi'$ and it is called the \emph{cylindrical distribution} of $X$. Conversely, to every cylindrical probability measure $\mu$ on $\Phi'$ there exists a canonical cylindrical random variable for which $\mu$ is its cylindrical distribution (see \cite{SchwartzRM}, p.256-8).  

If $X$ is a cylindrical random variable in $\Phi'$, the \emph{characteristic function} of $X$ is defined as the characteristic function $\widehat{\mu}_{X}: \Phi \rightarrow \C$ of its cylindrical distribution $\mu_{X}$. Hence $ \widehat{\mu}_{X}(\phi)= \Exp e^{i X(\phi)} $, $\forall \, \phi \in \Phi$. Also, we say that $X$ is \emph{$n$-integrable} if $ \Exp \left( \abs{X(\phi)}^{n} \right)< \infty$, $\forall \, \phi \in \Phi$, and has \emph{mean-zero} if $ \Exp \left( X(\phi) \right)=0$, $\forall \phi \in \Phi$. 

Let $X$ be a $\Phi'_{\beta}$-valued random variable, i.e. $X:\Omega \rightarrow \Phi'_{\beta}$ is a $\mathscr{F}/\mathcal{B}(\Phi'_{\beta})$-measurable map. We denote by $\mu_{X}$ the distribution of $X$, i.e. $\mu_{X}(\Gamma)=\Prob \left( X \in  \Gamma \right)$, $\forall \, \Gamma \in \mathcal{B}(\Phi'_{\beta})$, and it is a Borel probability measure on $\Phi'_{\beta}$. For each $\phi \in \Phi$ we denote by $X[\phi]$ the real-valued random variable defined by $X[\phi](\omega) \defeq X(\omega)[\phi]$, for all $\omega \in \Omega$. Then the mapping $\phi \mapsto X[\phi]$ defines a cylindrical random variable. Therefore the above concepts of characteristic function and integrability can be analogously defined for a $\Phi'_{\beta}$-valued random variable in terms of the cylindrical random variable induced/determined by $X$.  

If $X$ is a cylindrical random variable in $\Phi'$, a $\Phi'_{\beta}$-valued random variable $Y$ is a called a \emph{version} of $X$ if for every $\phi \in \Phi$, $X(\phi)=Y[\phi]$ $\Prob$-a.e. 
 
A $\Phi'_{\beta}$-valued random variable $X$ is called \emph{regular} if there exists a weaker countably Hilbertian topology $\theta$ on $\Phi$ such that $\Prob( \omega: X(\omega) \in \Phi'_{\theta})=1$. The following results establish alternative characterizations for regular random variables. 

\begin{theo}[\cite{FonsecaMora:2018}, Theorem 2.9]\label{theoCharacterizationRegularRV}
Let $X$ be a $\Phi'_{\beta}$-valued random variable. Consider the statements:
\begin{enumerate}
\item $X$ is regular.
\item The map $X: \Phi \rightarrow L^{0} \ProbSpace$, $\phi \mapsto X[\phi]$ is continuous. 
\item The distribution $\mu_{X}$ of $X$ is a Radon probability measure. 
\end{enumerate} 
Then $(1) \Leftrightarrow (2)$  and $(2) \Rightarrow (3)$. Moreover, if $\Phi$ is barrelled, we have $(3) \Rightarrow (1)$.  
\end{theo}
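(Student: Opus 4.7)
I would organise the proof around the four implications, leveraging nuclearity (the existence of Hilbert--Schmidt inclusions $i_{p,q}$) at every stage.

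For $(1) \Rightarrow (2)$: a convergent sequence $\phi_{k} \to \phi$ in $\Phi$ is a fortiori convergent in any weaker countably Hilbertian topology $\theta$; on the full-measure event $\{X \in \Phi'_{\theta}\}$, $X(\omega)$ is continuous on $\Phi_{\theta}$, so $X(\omega)[\phi_{k}] \to X(\omega)[\phi]$ pointwise, whence convergence in probability.

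For $(2) \Rightarrow (1)$, the substantive step: continuity of $X$ gives, for each $n$, a continuous Hilbertian semi-norm $p_{n}$ with $p_{n}(\phi) \leq 1$ implying $\Prob(\abs{X(\phi)} > 1/n) < 1/n$. By nuclearity I can enlarge the $p_{n}$ to an increasing chain with Hilbert--Schmidt inclusions $i_{p_{n},p_{n+1}}$, defining a weaker countably Hilbertian topology $\theta$. Then $\widetilde{\Phi_{\theta}}$ is nuclear Fr\'echet and the characteristic functional of $X$ is continuous on $\Phi_{\theta}$; Minlos' theorem yields a Radon probability measure $\tilde\mu$ on $(\widetilde{\Phi_{\theta}})'_{\beta} = \Phi'_{\theta}$ with the same cylindrical distribution as $X$. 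A separability argument using countably many evaluations transfers this extension into the pathwise statement $\Prob(X \in \Phi'_{\theta}) = 1$.

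For $(2) \Rightarrow (3)$, using the regularity just established: for $\epsilon > 0$, by continuity of measure pick $n$ with $\Prob(X \in \Phi'_{p_{n}}) > 1 - \epsilon/2$, and then $R > 0$ with $\Prob(p_{n}'(X) > R) < \epsilon/2$. The ball $B_{p_{n}'}(R)$ is equicontinuous (polar of a $0$-neighborhood) and hence, via the Hilbert--Schmidt factorization $\Phi'_{p_{n}} \to \Phi'_{p_{n+1}} \hookrightarrow \Phi'_{\beta}$, compact in $\Phi'_{\beta}$; it carries mass at least $1-\epsilon$, so $\mu_{X}$ is tight and therefore Radon. For $(3) \Rightarrow (1)$ under barrelledness: pick compact $K_{k} \subset \Phi'_{\beta}$ with $\mu_{X}(K_{k}) > 1 - 2^{-k}$; barrelledness plus Banach--Steinhaus forces each $K_{k}$ to be equicontinuous, hence contained in the polar of a $0$-neighborhood, which by nuclearity can be taken inside some ball $B_{p_{k}'}(R_{k}) \subset \Phi'_{p_{k}}$ with $p_{k}$ continuous Hilbertian; the topology $\theta$ generated by $\{p_{k}\}$ then witnesses regularity, as $\mu_{X}(\bigcup_{k} K_{k}) = 1$.

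\textbf{Main obstacle.} The crux is $(2) \Rightarrow (1)$: translating the Minlos-based extension of the cylindrical distribution into the pathwise statement $\Prob(X \in \Phi'_{\theta}) = 1$. Coincidence of cylindrical laws does not a priori force equality of $\Phi'$-valued random variables, and this is overcome by exploiting that $\Phi'_{\theta}$ is a countable union of separable Hilbert spaces, so membership in $\Phi'_{\theta}$ is detected by a countable dense family of test functionals, all of which agree between $X$ and the Minlos lift.
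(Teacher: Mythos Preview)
This theorem is not proved in the present paper; it is quoted from \cite{FonsecaMora:2016}, Theorem 2.9, and stated here without argument. There is therefore no proof in the paper against which to compare your proposal.

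That said, a few remarks on the proposal itself. Your sketches for $(1)\Rightarrow(2)$ and for $(3)\Rightarrow(1)$ under barrelledness are sound and follow lines the paper itself uses elsewhere (compare the proof of Theorem~\ref{theoLocalEquiconCharFuncInBarrelledNuclearSpace} and of Corollary~\ref{coroLevyMeasureIsRegular}): compact subsets of $\Phi'_\beta$ are equicontinuous when $\Phi$ is barrelled, hence sit inside polars $B_{p'}(R)$, and a countable exhaustion produces the weaker countably Hilbertian topology $\theta$.

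Your treatment of $(2)\Rightarrow(1)$, however, has a genuine gap precisely at the point you flag as the main obstacle. The proposed resolution --- that ``membership in $\Phi'_\theta$ is detected by a countable dense family of test functionals'' --- does not work as stated. If $D\subset B_{p_n}(1)$ is countable and $p_n$-dense, the identity $\sup_{\phi\in D}\abs{f[\phi]}=\sup_{\phi\in B_{p_n}(1)}\abs{f[\phi]}$ holds only for $f$ already known to be $p_n$-continuous; for a general $f\in\Phi'_\beta$ (continuous for the original, stronger topology but not a priori for $p_n$), boundedness on $D$ need not force boundedness on all of $B_{p_n}(1)$. Consequently the set $\{f\in\Phi'_\beta:\sup_{\phi\in D}\abs{f[\phi]}\le R\}$ can strictly contain $B_{p_n'}(R)$, and agreement of the cylindrical law of $X$ with the Minlos extension $\tilde\mu$ does not by itself yield $\mu_X(\Phi'_\theta)=1$. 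The argument in \cite{FonsecaMora:2016} proceeds differently: one uses the Hilbert--Schmidt inclusions to produce a genuine $\Phi'_\theta$-valued random variable on the \emph{same} probability space (in the spirit of the single-time case of Theorem~\ref{theoRegularizationTheoremCadlagContinuousVersion}) and then identifies it with $X$ almost surely via Proposition~\ref{propCondiIndistingProcess}-type reasoning, rather than trying to read off $\{X\in\Phi'_\theta\}$ from the cylindrical $\sigma$-algebra. Since your route to $(2)\Rightarrow(3)$ passes through $(1)$, the same gap propagates there.
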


Let $J=[0,\infty)$ or $J=[0,T]$ for some $T>0$. We say that $X=\{ X_{t} \}_{t \in J}$ is a \emph{cylindrical process} in $\Phi'$ if $X_{t}$ is a cylindrical random variable, for each $t \in J$. Clearly any $\Phi'_{\beta}$-valued stochastic processes $X=\{ X_{t} \}_{t \in J}$ defines a cylindrical process under the prescription: $X[\phi]=\{ X_{t}[\phi] \}_{t \in J}$, for each $\phi \in \Phi$. We will say that it is the \emph{cylindrical process determined/induced} by $X$.

A $\Phi'_{\beta}$-valued processes $Y=\{Y_{t}\}_{t \in J}$ is said to be a $\Phi'_{\beta}$-valued \emph{version} of the cylindrical process $X=\{X_{t}\}_{t \in J}$ on $\Phi'$ if for each $t \in J$, $Y_{t}$ is a $\Phi'_{\beta}$-valued version of $X_{t}$.  

Let $X=\{ X_{t} \}_{t \in J}$ be a $\Phi'_{\beta}$-valued process.
We say that $X$ is \emph{continuous} (respectively \emph{c\`{a}dl\`{a}g}) if for $\Prob$-a.e. $\omega \in \Omega$, the \emph{sample paths} $t \mapsto X_{t}(w) \in \Phi'_{\beta}$ of $X$ are continuous (respectively right-continuous with left limits). On the other hand, we say that $X$ is \emph{regular} if for every $t \in J$, $X_{t}$ is a regular random variable. The following two results contain some useful properties of $\Phi'_{\beta}$- valued regular processes. For proofs see Chapter 1 in \cite{FonsecaMoraThesis}. 

\begin{prop}\label{propCondiIndistingProcess} Let $X=\left\{ X_{t} \right\}_{t \in J}$ and $Y=\left\{ Y_{t} \right\}_{t \in J}$ be $\Phi'_{\beta}$- valued regular stochastic processes such that for each $\phi \in \Phi$, $X[\phi]=\left\{ X_{t}[\phi] \right\}_{t \in J}$ is a version of $Y[\phi]=\left\{ Y_{t}[\phi] \right\}_{t \in J}$. Then $X$ is a version of $Y$. Furthermore, if $X$ and $Y$ are right-continuous then they are indistinguishable. 
\end{prop}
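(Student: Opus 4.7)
The plan is to prove the two assertions separately, with the first (that $X$ is a version of $Y$) carrying the real content and the second (indistinguishability) following from a routine right-continuity argument. The central difficulty is that $\Phi$ is in general not separable, so the classical trick of fixing a countable dense subset in $\Phi$, intersecting the countably many null sets coming from the pointwise hypothesis, and then invoking continuity is not directly available. Regularity of $X_{t}$ and $Y_{t}$ is precisely the feature that lets us replace $\Phi$ by a separable ``skeleton''.

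For the first assertion I would fix $t \in J$ and use regularity to produce two weaker countably Hilbertian topologies $\theta_{X}$ and $\theta_{Y}$ on $\Phi$ such that $\Prob(X_{t} \in \Phi'_{\theta_{X}}) = \Prob(Y_{t} \in \Phi'_{\theta_{Y}}) = 1$. Taking $\theta$ to be the countably Hilbertian topology generated by the union of the two defining sequences of semi-norms, one has $\theta \leq \mathcal{T}$ and $\Phi'_{\theta_{X}} \cup \Phi'_{\theta_{Y}} \subseteq \Phi'_{\theta}$, so on a full-measure event $\Omega_{1}$ both $X_{t}(\omega)$ and $Y_{t}(\omega)$ are $\theta$-continuous linear functionals on $\Phi$. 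Since $\Phi_{\theta}$ is separable and pseudo-metrizable, pick a countable $\theta$-dense subset $\{\phi_{k}\}_{k \in \N} \subseteq \Phi$. The hypothesis yields $\Prob(X_{t}[\phi_{k}] = Y_{t}[\phi_{k}]) = 1$ for each $k$, and on the intersection $\Omega_{0}$ of $\Omega_{1}$ with these countably many events one has two $\theta$-continuous linear functionals that coincide on a $\theta$-dense set, hence on all of $\Phi$. Therefore $X_{t} = Y_{t}$ $\Prob$-a.s.\ as $\Phi'_{\beta}$-valued random variables, and since $t$ was arbitrary, $X$ is a version of $Y$.

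For the second assertion I would combine the first with the right-continuity of the paths of $X$ and $Y$ in $\Phi'_{\beta}$. Fix a countable dense subset $D$ of $J$ (for example $J \cap \Q$), and let $\tilde{\Omega}$ be the intersection of the full-measure sets $\{X_{s} = Y_{s}\}$ for $s \in D$ with the full-measure set on which both paths are right-continuous. On $\tilde{\Omega}$, equality $X_{s} = Y_{s}$ holds for every $s \in D$; taking a sequence $s_{n} \downarrow t$ in $D$ and passing to the limit in $\Phi'_{\beta}$ yields $X_{t} = Y_{t}$ for every $t \in J$, which is the desired indistinguishability.

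The main obstacle, as noted, is reconciling the pointwise-in-$\phi$ hypothesis with a topological continuity argument in the absence of separability of $\Phi$. The trick is the combined topology $\theta$: taking $\theta_{X}$ and $\theta_{Y}$ separately would not be enough because the countable dense subsets would differ, but their ``join'' $\theta$ works for both processes simultaneously, and the structural property $\Phi'_{\theta} = \bigcup_{n} \Phi'_{p_{n}}$ ensures the enclosure $\Phi'_{\theta_{X}} \cup \Phi'_{\theta_{Y}} \subseteq \Phi'_{\theta}$ used above. Once this is in place, everything reduces to a standard continuity-on-a-dense-set argument.
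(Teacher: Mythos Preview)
The paper does not actually prove this proposition; it refers the reader to Chapter~1 of the author's thesis \cite{FonsecaMoraThesis}. Your argument is correct and is exactly the natural one given the machinery the paper sets up: regularity of $X_{t}$ and $Y_{t}$ provides weaker countably Hilbertian topologies, joining them gives a single $\theta$ with $\Phi_{\theta}$ separable (Section~\ref{subsectionNuclSpace}), and then the standard density-plus-continuity argument applies; the indistinguishability part is routine once the first assertion is in hand.
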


\begin{prop} \label{independenceStochProcessDualSpace} Let $X^{1}=\left\{ X^{1}_{t} \right\}_{t \in J}$, $\dots$, $X^{k}=\left\{ X^{k}_{t} \right\}_{t \in J}$ be $\Phi'_{\beta}$- valued regular processes. Then $X^{1}, \dots, X^{k}$ are independent if and only if for all $n \in \N$ and $\phi_{1}, \dots, \phi_{n} \in \Phi$, the $\R^{n}$-valued processes 
$\{(X^{j}_{t}[\phi_{1}],\dots, X^{j}_{t}[\phi_{n}]): t \in J \}$, $j=1,\dots,k$, are independent.
\end{prop}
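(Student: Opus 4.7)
The forward implication is immediate: for any $\phi_1,\ldots,\phi_n \in \Phi$ the map sending a sample path $t \mapsto X^j_t$ to the path $t \mapsto \pi_{\phi_1,\ldots,\phi_n}(X^j_t)$ is $\sigma(X^j)$-measurable by \eqref{defiMapProjectionCylinder}, so the independence of the processes $X^1,\ldots,X^k$ transfers to the independence of their $\R^n$-valued projections.

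For the converse, my plan is to first reduce to finite-dimensional independence. The $\sigma$-algebra $\sigma(X^j)$ is generated by the $\pi$-system of events $\{X^j_{t_1} \in B_1,\ldots,X^j_{t_p} \in B_p\}$ with $B_i \in \mathcal{B}(\Phi'_\beta)$, so by Dynkin's lemma independence of the processes is equivalent to the statement that, for every finite set of times $t_1,\ldots,t_m \in J$, the random elements $Y^j \defeq (X^j_{t_1},\ldots,X^j_{t_m})$, $j=1,\ldots,k$, are independent as $(\Phi^m)'_\beta$-valued random variables (using that $\Phi^m$ is again nuclear and that $(\Phi^m)'_\beta = (\Phi'_\beta)^m$ for finite products). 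To prove this, I would fix an arbitrary finite family $(\phi^{j,l})_{j\le k,\, l\le m} \subset \Phi$, enumerate its members as $\phi_1,\ldots,\phi_N$ with $\phi^{j,l} = \phi_{c(j,l)}$, and apply the hypothesis to this list. The resulting independence of the $\R^{mN}$-valued vectors $\bigl(X^j_{t_l}[\phi_i]\bigr)_{l,i}$ across $j$ factorises their joint characteristic function; setting every real coefficient except those at indices $c(j,l)$ equal to zero extracts the identity
\[
\Exp \exp\Bigl( i \sum_{j,l} X^j_{t_l}[\phi^{j,l}] \Bigr) = \prod_{j=1}^{k} \Exp \exp\Bigl( i \sum_{l} X^j_{t_l}[\phi^{j,l}] \Bigr).
\]
This is precisely the assertion that the characteristic function on $\Phi^{mk}$ of the joint law of $(Y^1,\ldots,Y^k)$ coincides with that of the product law $\mu_{Y^1}\otimes\cdots\otimes\mu_{Y^k}$.

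The main delicacy is upgrading this agreement of characteristic functions to equality of the two probability measures on $(\Phi^{mk})'_\beta$. Here the regularity hypothesis becomes essential: by Theorem \ref{theoCharacterizationRegularRV}, each $X^j_{t_l}$ concentrates on $\Phi'_{\theta^{j,l}}$ for some weaker countably Hilbertian topology $\theta^{j,l}$ on $\Phi$, and taking the topology generated by the countable pool of Hilbertian semi-norms defining these finitely many $\theta^{j,l}$ yields a single weaker countably Hilbertian topology $\theta$ with every $X^j_{t_l}$ taking values $\Prob$-a.s.\ in $\Phi'_\theta$. Both the joint law of $(Y^1,\ldots,Y^k)$ and the product law $\mu_{Y^1}\otimes\cdots\otimes\mu_{Y^k}$ are then Radon probability measures on $(\Phi^{mk})'_\beta$ supported in a common subset of $((\widetilde{\Phi_\theta})'_\beta)^{mk}$, and Minlos' uniqueness theorem applied to the nuclear test space $\Phi^{mk}$ (cf.\ \cite{DaleckyFomin}, Theorem III.1.3) forces them to coincide, giving the desired independence of the $Y^j$ and hence of the $X^j$. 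The hardest step is exactly this last passage from equality of cylindrical characteristic functions to equality of the Radon extensions on the product space, which is what makes the regularity hypothesis indispensable.
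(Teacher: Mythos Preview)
The paper does not actually prove this proposition; it defers to Chapter~1 of the author's thesis \cite{FonsecaMoraThesis}, so there is no in-paper argument to compare against. Your proof is correct and follows the natural route: reduce via a $\pi$-system argument to independence at finitely many times, factorise the joint characteristic function on $(\Phi^{mk})'_{\beta}$ using the hypothesis, and then invoke regularity to know that both the joint law and the product law are Radon (Theorem~\ref{theoCharacterizationRegularRV} applied componentwise, together with nuclearity of the finite product $\Phi^{mk}$), so that agreement of characteristic functions forces agreement of the measures.

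Two small remarks. First, what you call ``Minlos' uniqueness theorem'' is really the separate (and easier) fact that two Radon probability measures on $\Phi'_{\beta}$ with the same characteristic function coincide; Minlos' theorem proper concerns existence of a Radon extension. The uniqueness you need follows because both measures are concentrated on a common $(\Phi'_{\theta})^{mk}$, where the cylindrical $\sigma$-algebra already separates points and coincides with the Borel $\sigma$-algebra of this separable metrizable subspace. Second, you should note briefly why the product measure $\mu_{Y^{1}}\otimes\cdots\otimes\mu_{Y^{k}}$ is itself Radon on $(\Phi^{mk})'_{\beta}$; the cleanest way is to realise it as the law of an independent coupling on a product probability space, which is again a regular random variable by the same common-$\theta$ argument, and then apply Theorem~\ref{theoCharacterizationRegularRV}.
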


The following result known as the \emph{regularization theorem} plays a fundamental role in this work. It establishes conditions for a cylindrical stochastic process in $\Phi'$ to have a regular continuous or c\`{a}dl\`{a}g version.  

\begin{theo}[Regularization Theorem; \cite{FonsecaMora:2018}, Theorem 3.2]\label{theoRegularizationTheoremCadlagContinuousVersion}
Let $X=\{X_{t} \}_{t \geq 0}$ be a cylindrical process in $\Phi'$ satisfying:
\begin{enumerate}
\item For each $\phi \in \Phi$, the real-valued process $X(\phi)=\{ X_{t}(\phi) \}_{t \geq 0}$ has a continuous (respectively c\`{a}dl\`{a}g) version.
\item For every $T > 0$, the family $\{ X_{t}: t \in [0,T] \}$ of linear maps from $\Phi$ into $L^{0} \ProbSpace$ is equicontinuous.  
\end{enumerate}
Then there exists a countably Hilbertian topology $\vartheta_{X}$ on $\Phi$ 
and a $(\widetilde{\Phi_{\vartheta_{X}}})'_{\beta}$-valued continuous (respectively c\`{a}dl\`{a}g) process $Y= \{ Y_{t} \}_{t \geq 0}$, such that for every $\phi \in \Phi$, $Y[\phi]= \{ Y_{t}[\phi] \}_{t \geq 0}$ is a version of $X(\phi)= \{ X_{t}(\phi) \}_{t \geq 0}$. Moreover $Y$ is a $\Phi'_{\beta}$-valued, regular, continuous (respectively c\`{a}dl\`{a}g) version of $X$ that is unique up to indistinguishable versions. 
\end{theo}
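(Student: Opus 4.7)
The plan is to leverage the equicontinuity hypothesis together with the nuclear structure to extract a countable family of Hilbertian seminorms that controls $X$ uniformly on each bounded time interval, and then to promote the pointwise-in-$\phi$ regularity to joint path regularity in the dual of a suitably coarser topology. First, for each $n \in \N$ I would apply equicontinuity of $\{X_{t} : t \in [0,n]\}$ at $0$ (into $L^{0}\ProbSpace$) to obtain a continuous Hilbertian seminorm $q_{n}$ on $\Phi$ such that $X_{t}(\phi)$ is uniformly small in probability for $t \in [0,n]$ whenever $q_{n}(\phi)$ is small. By nuclearity I enlarge $q_{n}$ to a continuous Hilbertian seminorm $p_{n} \geq q_{n}$ with $i_{q_{n},p_{n}}$ Hilbert--Schmidt, and after taking partial sums I may assume the sequence $\{p_{n}\}$ is increasing. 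Let $\vartheta_{X}$ be the locally convex topology generated by $\{p_{n}\}$; by the discussion in Section \ref{subsectionNuclSpace} this is a weaker countably Hilbertian topology on $\Phi$ and $(\widetilde{\Phi_{\vartheta_{X}}})'_{\beta}=\bigcup_{n} \Phi'_{p_{n}}$ embeds continuously in $\Phi'_{\beta}$.

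The core construction is pathwise. Choose a countable $\vartheta_{X}$-dense set $D \subseteq \Phi$; for each $\phi \in D$ fix the continuous (respectively c\`{a}dl\`{a}g) modification of $X(\phi)$ given by hypothesis (1), and let $N$ be a $\Prob$-null set outside of which every such modification has the prescribed regularity. For $\omega \notin N$ I would show that the cylindrical functional $\phi \mapsto X_{t}(\phi)(\omega)$, initially defined on $D$ for $t$ ranging in a dense countable set of times, extends to a continuous linear functional on $\widetilde{\Phi_{\vartheta_{X}}}$ jointly measurable in $(t,\omega)$, and that the resulting process $Y_{t}$ has continuous (resp.\ c\`{a}dl\`{a}g) paths in $(\widetilde{\Phi_{\vartheta_{X}}})'_{\beta}$. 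The key analytical ingredient is that since $i_{q_{n},p_{n}}$ is Hilbert--Schmidt and $\{X_{t}:t \in [0,n]\}$ is uniformly equicontinuous at $0$ in $q_{n}$, for any orthonormal basis $\{\psi_{k}\}$ of $\Phi_{p_{n}}$ the tail sum $\sum_{k \geq K} \Exp\bigl(\abs{X_{t}(\psi_{k})}^{2} \wedge 1\bigr)$ tends to $0$ uniformly in $t \in [0,n]$; an \Ito--Nisio type argument then promotes the pointwise c\`{a}dl\`{a}g (resp.\ continuous) behaviour on $D$ to norm-c\`{a}dl\`{a}g (resp.\ norm-continuous) paths in $\Phi'_{p_{n}}$, which are then assembled into a path in $(\widetilde{\Phi_{\vartheta_{X}}})'_{\beta}$ by taking the union over $n$.

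Once $Y$ is in hand, the identity $Y_{t}[\phi]=X_{t}(\phi)$ $\Prob$-a.s.\ on $D$ extends to all $\phi \in \Phi$ by joint $L^{0}$-continuity of both sides in $\phi$ (the right-hand side from hypothesis (2), the left-hand side because $Y_{t}$ is a continuous linear functional on $\widetilde{\Phi_{\vartheta_{X}}}$). Viewing $Y$ as a $\Phi'_{\beta}$-valued process via the continuous inclusion $(\widetilde{\Phi_{\vartheta_{X}}})'_{\beta} \hookrightarrow \Phi'_{\beta}$, we have $\Prob(Y_{t} \in \Phi'_{\vartheta_{X}})=1$ for every $t$, so $Y$ is regular in the sense defined before Theorem \ref{theoCharacterizationRegularRV}. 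Uniqueness up to indistinguishability then follows directly from Proposition \ref{propCondiIndistingProcess}: any two regular $\Phi'_{\beta}$-valued right-continuous versions of $X$ must coincide cylindrically $\Prob$-a.s.\ and hence be indistinguishable.

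The main obstacle is the pathwise extension in the second paragraph. The $\Prob$-null sets on which the one-dimensional modifications fail depend a priori on $\phi$, so regularity for each $\phi$ on a countable dense set does not immediately yield a dual-valued process with regular paths. The Hilbert--Schmidt inclusion supplied by nuclearity is precisely what converts the uniform-in-$t$ equicontinuity at zero into a genuine square-summability estimate along an orthonormal basis; this is the technical lever that allows uniform control of the partial sums across the countable dense time set and thus the construction of a version with genuinely regular sample paths in the strong dual topology.
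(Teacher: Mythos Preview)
This theorem is not proved in the present paper: it is stated with a citation to \cite{FonsecaMora:2016}, Theorem 3.2, and used as a black box throughout. There is therefore no proof here to compare your proposal against.

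That said, your outline follows the standard architecture for such regularization results (equicontinuity yields a controlling seminorm on each $[0,n]$; nuclearity upgrades it via a Hilbert--Schmidt inclusion; one builds the process pathwise on a countable dense set and then closes up). The overall strategy is sound, and the uniqueness argument via Proposition~\ref{propCondiIndistingProcess} is exactly right.

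The one place where your sketch is genuinely incomplete is the ``\Ito--Nisio type argument''. Equicontinuity of $\{X_{t}:t\in[0,n]\}$ into $L^{0}$ gives you that $\Exp(\abs{X_{t}(\psi_{k})}^{2}\wedge 1)$ is \emph{small} once $q_{n}(\psi_{k})$ is small, uniformly in $t$; it does not by itself give \emph{summability} of $\sum_{k}\Exp(\abs{X_{t}(\psi_{k})}^{2}\wedge 1)$, which is what you need to assemble a $\Phi'_{p_{n}}$-valued random variable and control the tail uniformly over the countable time skeleton. The usual fix is to iterate the nuclear structure: one first passes from $L^{0}$-equicontinuity to a quantitative estimate of the form $\Exp(\abs{X_{t}(\phi)}^{2}\wedge 1)\le C\, q_{n}(\phi)^{2}+\epsilon$ (or works with a Ky Fan--type metric), and then composes with a \emph{second} Hilbert--Schmidt inclusion $i_{p_{n},r_{n}}$ so that the orthonormal basis of $\Phi_{r_{n}}$ has $\sum_{k}q_{n}(\psi_{k})^{2}<\infty$. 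Your single Hilbert--Schmidt step $i_{q_{n},p_{n}}$ is not enough to close this gap as written; you should either insert the extra layer or make explicit the estimate that turns uniform smallness into summability. Once that is done, the promotion from pointwise-in-$\phi$ regularity on $D$ to norm-path regularity in $\Phi'_{r_{n}}$ goes through by the mechanism you describe.
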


The following result is a particular case of the regularization theorem that establishes conditions for the existence of a regular continuous or c\`{a}dl\`{a}g version with finite moments and taking values in one of the Hilbert spaces $\Phi'_{q}$. 

\begin{theo}[\cite{FonsecaMora:2018}, Theorem 4.3] \label{theoExistenceCadlagContVersionHilbertSpaceUniformBoundedMoments}
Let $X=\{X_{t} \}_{t \geq 0}$ be a cylindrical process in $\Phi'$ satisfying:
\begin{enumerate}
\item For each $\phi \in \Phi$, the real-valued process $X(\phi)=\{ X_{t}(\phi) \}_{t \geq 0}$ has a continuous (respectively c\`{a}dl\`{a}g) version.
\item There exists $n \in \N$ and a continuous Hilbertian semi-norm $\varrho$ on $\Phi$ such that for all $T>0$ there exists $C(T)>0$ such that 
\begin{equation} \label{uniformBoundMomentsByHilbertSeminorm}
\Exp \left( \sup_{t \in [0,T]} \abs{X_{t}(\phi)}^{n} \right) \leq C(T) \varrho(\phi)^{n}, \quad \forall \, \phi \in \Phi.
\end{equation} 
\end{enumerate}
Then there exists a continuous Hilbertian semi-norm $q$ on $\Phi$, $\varrho \leq q$, such that $i_{\varrho,q}$ is Hilbert-Schmidt and there exists a $\Phi'_{q}$-valued continuous (respectively c\`{a}dl\`{a}g) process $Y=\{ Y_{t} \}_{t \geq 0}$, satisfying:
\begin{enumerate}[label=(\alph*)]
\item For every $\phi \in \Phi$, $Y[\phi]= \{ Y_{t}[\phi] \}_{t \geq 0}$ is a version of $X(\phi)= \{ X_{t}(\phi) \}_{t \geq 0}$, 
\item For every $T>0$, $\Exp \left( \sup_{t \in [0,T]} q'(Y_{t})^{n} \right) < \infty$.   
\end{enumerate} 
Furthermore $Y$ is a $\Phi'_{\beta}$-valued continuous (respectively c\`{a}dl\`{a}g) version of $X$ that is unique up to indistinguishable versions.
\end{theo}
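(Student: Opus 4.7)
The plan is to reduce to the regularization theorem (Theorem \ref{theoRegularizationTheoremCadlagContinuousVersion}) and then sharpen the conclusion using the Hilbert--Schmidt embeddings afforded by nuclearity of $\Phi$.

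First, I would verify that hypothesis (2) implies the equicontinuity required in Theorem \ref{theoRegularizationTheoremCadlagContinuousVersion}. By Markov's inequality, for every $t \in [0,T]$, $\phi \in \Phi$, and $\epsilon>0$,
\begin{equation*}
\Prob\bigl(\abs{X_{t}(\phi)}>\epsilon\bigr) \,\leq\, \frac{1}{\epsilon^{n}}\, \Exp \abs{X_{t}(\phi)}^{n} \,\leq\, \frac{C(T)}{\epsilon^{n}}\,\varrho(\phi)^{n}.
\end{equation*}
Since $\varrho$ is a continuous semi-norm on $\Phi$, this yields equicontinuity at zero, hence (by linearity) equicontinuity of the family $\{X_{t}:t\in[0,T]\}$ of maps $\Phi\to L^{0}\ProbSpace$. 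Theorem \ref{theoRegularizationTheoremCadlagContinuousVersion} then delivers a $\Phi'_{\beta}$-valued regular continuous (respectively c\`{a}dl\`{a}g) version $Y$ of $X$, unique up to indistinguishable versions.

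Next, I would use the nuclearity of $\Phi$ to choose a continuous Hilbertian semi-norm $q$ with $\varrho\leq q$ and $i_{\varrho,q}:\Phi_{q}\to\Phi_{\varrho}$ Hilbert--Schmidt, and pick an orthonormal basis $\{\phi_{j}\}_{j\in\N}$ of $\Phi_{q}$ lying in $\Phi$ (using density of $\Phi$ in $\Phi_{q}$), so that $\sum_{j\in\N}\varrho(\phi_{j})^{2} = \norm{i_{\varrho,q}}_{\mathcal{L}_{2}(\Phi_{q},\Phi_{\varrho})}^{2} < \infty$. The central computation interchanges $\sup_{t}$ with summation via Minkowski's inequality in $L^{n/2}\ProbSpace$ (for $n\geq 2$; the case $n=1$ is handled analogously after further enlarging $q$ so that $i_{\varrho,q}$ becomes nuclear, in which case its singular-value basis satisfies $\sum_{j}\varrho(\phi_{j})<\infty$):
\begin{equation*}
\left\|\sup_{t\in[0,T]}\sum_{j\in\N}Y_{t}[\phi_{j}]^{2}\right\|_{L^{n/2}\ProbSpace} \,\leq\, \sum_{j\in\N}\left\|\sup_{t\in[0,T]}\abs{Y_{t}[\phi_{j}]}\right\|_{L^{n}\ProbSpace}^{2} \,\leq\, C(T)^{2/n}\sum_{j\in\N}\varrho(\phi_{j})^{2}.
\end{equation*}
Here the final inequality uses $Y_{t}[\phi_{j}]=X_{t}(\phi_{j})$ $\Prob$-a.s.\ together with hypothesis (2). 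Consequently, almost surely $Y_{t}$ extends continuously to $\Phi_{q}$ with $q'(Y_{t})^{2}=\sum_{j}Y_{t}[\phi_{j}]^{2}$, and after modification on a $\Prob$-null set $Y_{t}\in\Phi'_{q}$ for every $t\geq 0$, giving property (b).

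The main obstacle is upgrading the path regularity from the weak structure of $\Phi'_{\beta}$ to the Hilbert norm $q'$. I plan to resolve this by enlarging $q$ (still calling it $q$) so that it dominates an intermediate Hilbertian semi-norm $q_{0}$ with both $i_{\varrho,q_{0}}$ and $i_{q_{0},q}$ Hilbert--Schmidt. The preceding estimate applied with $q_{0}$ in place of $q$ places $Y$ in $\Phi'_{q_{0}}$ with $\sup_{t\in[0,T]}q_{0}'(Y_{t})<\infty$ $\Prob$-a.s., and the compactness of the Hilbert--Schmidt embedding $\Phi'_{q_{0}}\hookrightarrow\Phi'_{q}$ then converts the $\Phi'_{\beta}$-path regularity, in combination with this uniform $q_{0}'$-bound, into $q'$-norm continuity (respectively right-continuity with left limits) of the trajectories. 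Uniqueness up to indistinguishable versions is inherited from Theorem \ref{theoRegularizationTheoremCadlagContinuousVersion} via Proposition \ref{propCondiIndistingProcess}.
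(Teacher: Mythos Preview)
The paper does not prove this statement; it is quoted verbatim from \cite{FonsecaMora:2016}, Theorem~4.3, so there is no in-paper proof to compare against. Your outline is the natural route and is essentially correct: reduce to the regularization theorem via Markov's inequality, then pass to a Hilbert--Schmidt enlargement $q\geq\varrho$ and use a Parseval-type computation together with Minkowski in $L^{n/2}$ (or a nuclear enlargement when $n=1$), finally upgrading path regularity through a second Hilbert--Schmidt layer and compactness. This is also exactly the flavor of argument the present paper itself deploys later (see the proof of Theorem~\ref{existenceMartingaleLevyItoDecomp}).

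There is one step you should tighten. From $\sum_{j}Y_{t}[\phi_{j}]^{2}<\infty$ you conclude that ``$Y_{t}$ extends continuously to $\Phi_{q}$''. What this actually gives is a $q$-bounded extension of $Y_{t}$ from $\operatorname{span}\{\phi_{j}\}$ to $\Phi_{q}$; it does \emph{not} by itself show that this extension agrees with the original $Y_{t}\in\Phi'$ on all of $\Phi$ (for a general $f\in\Phi'$, finiteness of $\sum_{j}f[\phi_{j}]^{2}$ does not force $f\in\Phi'_{q}$). The fix uses hypothesis~(2) once more: for any $\phi\in\Phi$, writing $\psi_{N}=\sum_{j\leq N}\langle\phi,\phi_{j}\rangle_{q}\,\phi_{j}\in\Phi$, one has $\varrho(\phi-\psi_{N})\to 0$ (since $\varrho\leq q$), hence $X_{t}(\psi_{N})\to X_{t}(\phi)$ in $L^{n}$ by \eqref{uniformBoundMomentsByHilbertSeminorm}. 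Combined with the convergence of $\sum_{j\leq N}\langle\phi,\phi_{j}\rangle_{q}X_{t}(\phi_{j})$ in $L^{n}$ (Cauchy--Schwarz and $\sum_{j}\varrho(\phi_{j})^{2}<\infty$), this yields $Y_{t}[\phi]=\sum_{j}\langle\phi,\phi_{j}\rangle_{q}Y_{t}[\phi_{j}]$ $\Prob$-a.e.\ for each fixed $\phi$, and then regularity of both sides together with Proposition~\ref{propCondiIndistingProcess} gives the identification $Y_{t}\in\Phi'_{q}$ for all $t$ on a single full-measure set. With this patch, your argument goes through.
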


\section{L\'{e}vy Processes and Infinitely Divisible Measures.}\label{sectionLPBP}

In this section we study the relationship between L\'{e}vy processes and infinitely divisible measures. The link between these two concepts are the cylindrical L\'{e}vy processes and the semigroups of probability measures. 

\subsection{Infinitely Divisible Measures and Convolution Semigroups in the Strong Dual.} \label{subSectionInfDivMeas}

Let $\Psi$ be a locally convex space. A measure $\mu \in \goth{M}_{R}^{1}(\Psi'_{\beta})$ is called \emph{infinitely divisible} if for every $n\in \N$ there exist a \emph{$n$-th root} of $\mu$, i.e. a measure $\mu_{n} \in \goth{M}_{R}^{1}(\Psi'_{\beta})$ such that $\mu=\mu_{n}^{\ast n}$. We denote by $\goth{I}(\Psi'_{\beta})$ the set of all infinitely divisible measures on $\Psi'_{\beta}$.  

A family $\{ \mu_{t} \}_{t \geq 0} \subseteq \goth{M}_{R}^{1}(\Psi'_{\beta})$ is said to be a \emph{convolution semigroup} if $\mu_{s} \ast \mu_{t} = \mu_{s+t}$ for any $s,t \geq 0$ and $ \mu_{0}= \delta_{0}$. Moreover, we say that the convolution semigroup is \emph{continuous} if the mapping $t \mapsto \mu_{t}$ from $[0,\infty)$ into $\goth{M}_{R}^{1}(\Psi'_{\beta})$ is continuous in the weak topology. 

The following result follows easily from the definition of continuous convolution semigroup. 

\begin{prop} \label{propConvoSemiIsInfinDivisible}
If $\{ \mu_{t} \}_{t \geq 0}$ is a convolution semigroup in $\goth{M}_{R}^{1}(\Psi'_{\beta})$, then $\forall \, t \geq 0$ $\mu_{t} \in \goth{I}(\Psi'_{\beta})$.
\end{prop}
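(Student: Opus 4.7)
The statement is essentially a direct consequence of the definitions, so the plan is short. Fix $t \geq 0$ and $n \in \N$; I need to exhibit an $n$-th convolution root of $\mu_t$ inside $\goth{M}_R^1(\Psi'_\beta)$. The natural candidate is $\mu_{t/n}$, which lies in $\goth{M}_R^1(\Psi'_\beta)$ by hypothesis. The only thing to check is that $(\mu_{t/n})^{\ast n} = \mu_t$.

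To verify this, I would proceed by a straightforward induction on $k \in \N$, using only the semigroup identity $\mu_s \ast \mu_r = \mu_{s+r}$: the base case $k=1$ is trivial, and the inductive step
\[
(\mu_{t/n})^{\ast (k+1)} = (\mu_{t/n})^{\ast k} \ast \mu_{t/n} = \mu_{kt/n} \ast \mu_{t/n} = \mu_{(k+1)t/n}
\]
uses the semigroup property once. Setting $k = n$ gives $(\mu_{t/n})^{\ast n} = \mu_{t}$, so $\mu_{t/n}$ is an $n$-th root of $\mu_t$. Since $n$ was arbitrary, $\mu_t \in \goth{I}(\Psi'_\beta)$.

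There is really no obstacle here; the only thing worth being careful about is that the induction uses associativity of convolution on $\goth{M}_R^b(\Psi'_\beta)$ (so that $(\mu_{t/n})^{\ast(k+1)}$ is unambiguously defined), which is standard for Radon measures on a locally convex space, and that one must include the case $t = 0$ separately, where the claim reduces to the observation that $\delta_0 = \delta_0^{\ast n}$. Continuity of the semigroup plays no role in the argument, which is consistent with the proposition being stated without the continuity hypothesis.
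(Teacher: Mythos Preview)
Your proof is correct and is exactly the argument the paper has in mind: the paper does not write out a proof but simply remarks that the result ``follows easily from the definition of continuous convolution semigroup,'' which is precisely the observation that $\mu_{t/n}$ is an $n$-th root of $\mu_t$ via the semigroup identity. Your remark that continuity is not actually needed is also accurate.
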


Now, to prove the converse of Proposition \ref{propConvoSemiIsInfinDivisible} we will need the following definitions. 

Let $\mu$ be an infinitely divisible measure on $\Psi'_{\beta}$. We define the \emph{root set} of $\mu$ by 
$$ R(\mu) \defeq \bigcup_{n \geq 1} \left\{ \nu^{m}: \nu \in  \goth{M}_{R}^{1}(\Psi'_{\beta}) \mbox{ with } \nu^{n}= \mu, \, 1 \leq m \leq n \right\}.$$ 
We say that $\mu$ is \emph{root compact} if its root set $R(\mu)$ is uniformly tight.

We are ready for the main result of this section. As stated in its proof, the main arguments are based on several results due to E. Siebert (see \cite{Siebert:1974, Siebert:1976}).

\begin{theo} \label{theoCorrespInfDivisAndConvSemigroups}
Assume that $\Psi$ is a locally convex space for which $\Psi'_{\beta}$ is quasi-complete. If $\mu \in \goth{I}(\Psi'_{\beta})$, then there exists a unique continuous convolution semigroup $\{ \mu_{t} \}_{t \geq 0}$ in $\goth{M}_{R}^{1}(\Phi'_{\beta})$ such that $\mu_{1}=\mu$.  
\end{theo}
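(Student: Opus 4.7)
The plan is to invoke the embedding machinery of Siebert, which shows that on a sufficiently nice Hausdorff topological group every \emph{root compact} infinitely divisible probability measure can be embedded (uniquely) into a continuous convolution semigroup. The strategy reduces to two tasks: checking that $\Psi'_{\beta}$ falls within Siebert's framework, and verifying that every $\mu \in \goth{I}(\Psi'_{\beta})$ is root compact in the sense defined above. The quasi-completeness assumption on $\Psi'_{\beta}$ is what allows the limiting procedures in Siebert's construction to produce a measure rather than merely a Cauchy net.

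First I would observe that $(\Psi'_{\beta},+)$ is a Hausdorff topological abelian group (joint continuity of addition and continuity of inversion follow from $\beta$ being a locally convex vector topology), so the ambient setting of Siebert \cite{Siebert:1974, Siebert:1976} applies. The main analytic step is to prove root compactness of $\mu$. Fix $\epsilon > 0$ and, using the Radon property of $\mu$, choose a compact $K \subseteq \Psi'_{\beta}$ with $\mu(K^{c}) < \epsilon$. Given any $\nu \in \goth{M}_{R}^{1}(\Psi'_{\beta})$ with $\nu^{\ast n}=\mu$ and any $1 \le m \le n$, one needs a compact $\tilde{K}_{\epsilon}$ (independent of $\nu, n, m$) with $\nu^{\ast m}(\tilde{K}_{\epsilon}^{c}) < \epsilon$. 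The standard route is symmetrization: pass to $\nu \ast \check{\nu}$ (where $\check{\nu}(A)=\nu(-A)$), and apply a Kolmogorov/L\'{e}vy-type maximal inequality for partial sums of i.i.d.\ symmetric $\Psi'_{\beta}$-valued summands to relate the mass of $\nu^{\ast m}$ outside a neighborhood of zero to the mass of $\mu = \nu^{\ast n}$ outside a related neighborhood. This is precisely the tightness argument Siebert developed for general topological groups and it transfers verbatim here, yielding root compactness.

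Once $R(\mu)$ is uniformly tight, Siebert's embedding theorem applies directly. One constructs $\mu_{t}$ first for dyadic (or general) rationals $t=p/q$ by selecting $q$-th roots of $\mu$ and passing to weak limits of subnets along $R(\mu)$; root compactness guarantees both the existence of such limits and that the resulting family $\{\mu_{p/q}\}$ is consistent in the sense $\mu_{p/q} \ast \mu_{p'/q'} = \mu_{p/q+p'/q'}$. The semigroup is then extended to all $t \ge 0$ by weak continuity, using quasi-completeness of $\Psi'_{\beta}$ to ensure the weak limits define Radon measures on $\Psi'_{\beta}$. For uniqueness, suppose $\{\nu_{t}\}_{t \ge 0}$ is another continuous convolution semigroup with $\nu_{1}=\mu$; then for each rational $t=p/q$, both $\mu_{t}$ and $\nu_{t}$ are $q$-th roots of $\mu^{\ast p}$ lying in $R(\mu^{\ast p})$, and Siebert's uniqueness statement (again a consequence of root compactness) forces $\mu_{t}=\nu_{t}$ on $\Q_{+}$, from which weak continuity extends the equality to $[0,\infty)$.

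The main obstacle I anticipate is the rigorous verification of root compactness: Siebert's inequalities are formulated on metric topological groups or on groups possessing invariant neighborhood bases, and translating them to a general quasi-complete strong dual $\Psi'_{\beta}$ requires one to control tails in terms of the neighborhoods defining $\beta$ (i.e.\ polars of bounded sets in $\Psi$) rather than an invariant metric. All remaining steps---consistency on rationals, continuity in $t$, uniqueness---are formal consequences once the tightness of $R(\mu)$ is in hand.
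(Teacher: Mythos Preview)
Your proposal is correct and follows essentially the same route as the paper: both reduce the problem to Siebert's embedding machinery \cite{Siebert:1974, Siebert:1976}, with root compactness of $\mu$ as the key hypothesis to verify. The paper's proof differs from yours only in that it does not attempt to rederive root compactness via symmetrization and maximal inequalities; instead it cites Siebert's Satz~6.2 and~6.4 in \cite{Siebert:1974} directly, which already establish that the root set of any tight infinitely divisible measure on a quasi-complete locally convex space is uniformly tight. Thus the obstacle you anticipate in your final paragraph---translating Siebert's inequalities from metric groups to the strong dual $\Psi'_{\beta}$---is in fact already handled in the literature you are invoking, and you can simply cite those results rather than reprove them. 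The paper also separates the argument into a first step (existence of a rational semigroup via \cite{Siebert:1974}, Korollar~5.4) and a second step (extension to a continuous real-parameter semigroup via relative compactness and \cite{Siebert:1976}, Proposition~5.3), which streamlines the presentation compared to your sketch of building the semigroup from scratch.
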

\begin{prf}
First as $\Psi'_{\beta}$ is locally convex and $\mu \in \goth{I}(\Psi'_{\beta})$, there exists a rational continuous convolution semigroup $\{ \nu_{t} \}_{t \in \Q \cap [0,\infty) }$ in $\goth{M}_{R}^{1}(\Psi'_{\beta})$ such that $\nu_{1}=\mu$ (see \cite{Siebert:1974}, Korollar 5.4). 

Now, as $\mu=\nu_{1}=\nu_{1/q}^{\ast q}$, then $\nu_{1/q}$ is a root of $\mu$ for each $q \in \N \setminus \{ 0\}$. But as for $p, q \in \N \setminus \{ 0\}$ we have $\nu^{\ast p/q}=\nu_{1/q}^{\ast p}$, then we have that $\nu_{t} \in R(\mu)$ for each $t \in \Q \cap [0,1]$. 

On the other hand, as $\mu$ is tight (is Radon) and $\Psi'_{\beta}$ is a  quasi-complete locally convex space, the root set $R(\mu)$ of $\mu$ is uniformly tight (see \cite{Siebert:1974}, Satz 6.2 and 6.4). Hence the set $\{ \nu_{t} \}_{t \in \Q \cap [0,1] }$ is uniformly tight and by Prokhorov's theorem it is relatively compact. This last property guarantees the existence of a (unique) continuous convolution semigroup $\{ \mu_{t} \}_{t \geq 0}$ in $\goth{M}_{R}^{1}(\Phi'_{\beta})$ such that $\nu_{t}=\mu_{t}$ for each $t \in \Q \cap [0,\infty)$ (see \cite{Siebert:1976}, Proposition 5.3). Therefore, $\mu=\nu_{1}=\mu_{1}$.   
\end{prf}

The following result will be of great importance in further developments. 
 
\begin{lemm} \label{lemmLocalUnifTightnessConvSemigroup}
Assume that $\Psi'_{\beta}$ is quasi-complete and let $\{ \mu_{t} \}_{t \geq 0}$ be a continuous convolution semigroup in $\goth{M}_{R}^{1}(\Phi'_{\beta})$. Then, $\forall \, T>0$ $\{ \mu_{t}: t \in [0,T]\}$ is uniformly tight. 
\end{lemm}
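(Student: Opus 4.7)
The plan is to reduce uniform tightness on the whole interval $[0,T]$ to tightness of the root set of a single element of the semigroup, then bootstrap from a dense set of times to all times via weak continuity.

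First, by Proposition \ref{propConvoSemiIsInfinDivisible}, $\mu_{T}\in\goth{I}(\Psi'_{\beta})$. Since $\Psi'_{\beta}$ is quasi-complete and $\mu_{T}$ is Radon, I would invoke the same Siebert results used in the proof of Theorem \ref{theoCorrespInfDivisAndConvSemigroups} (\cite{Siebert:1974}, Satz 6.2 and 6.4) to conclude that the root set $R(\mu_{T})$ is uniformly tight. Next, observe that for every pair $m,n\in\N$ with $m\leq n$, the element $\mu_{T/n}$ is an $n$-th root of $\mu_{T}$ (because $\mu_{T/n}^{\ast n}=\mu_{T}$ by the semigroup property), and $\mu_{mT/n}=\mu_{T/n}^{\ast m}$, so $\mu_{mT/n}\in R(\mu_{T})$. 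Hence
$$\{\mu_{t}\st t\in S\}\subseteq R(\mu_{T}), \quad \text{where } S\defeq\{mT/n\st m,n\in\N,\, m\leq n\},$$
and this set $\{\mu_{t}\st t\in S\}$ is uniformly tight. Note that $S$ is dense in $[0,T]$.

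To extend uniform tightness from $S$ to all of $[0,T]$, fix $\epsilon>0$ and choose, by the uniform tightness of $R(\mu_{T})$, a compact set $K\subseteq\Psi'_{\beta}$ with $\mu(K^{c})<\epsilon$ for all $\mu\in R(\mu_{T})$; in particular $\mu_{s}(K^{c})<\epsilon$ for every $s\in S$. Given an arbitrary $t\in[0,T]$, pick a sequence $(s_{k})\subseteq S$ with $s_{k}\to t$. By the weak continuity of the convolution semigroup, $\mu_{s_{k}}\to\mu_{t}$ in the weak topology of $\goth{M}_{R}^{1}(\Psi'_{\beta})$. Since $K$ is closed (being compact in the Hausdorff space $\Psi'_{\beta}$) and the measures are Radon, the standard portmanteau inequality gives $\mu_{t}(K^{c})\leq\liminf_{k}\mu_{s_{k}}(K^{c})\leq\epsilon$. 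Thus a single compact $K$ controls $\mu_{t}(K^{c})<2\epsilon$ (say) for all $t\in[0,T]$, which is exactly the required uniform tightness.

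The only technical worry is the last step, namely that weak convergence of Radon probability measures on the (possibly non-metrizable) space $\Psi'_{\beta}$ still yields the portmanteau inequality $\liminf\mu_{s_{k}}(U)\geq\mu_{t}(U)$ for open $U$. This is standard for Radon measures on Hausdorff spaces provided we define weak convergence through bounded continuous test functions, so it does not require any additional structure on $\Psi$. Consequently no serious obstacle arises, and the main point of the argument remains the passage \emph{rational time $\Rightarrow$ root compactness $\Rightarrow$ uniform tightness}, with density and portmanteau handling the remaining times.
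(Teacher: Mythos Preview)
Your proof is correct and follows essentially the same route as the paper: both arguments show that the measures at rational times lie in the root set $R(\mu_{T})$, invoke Siebert's results to get uniform tightness of $R(\mu_{T})$, and then pass to all times via weak continuity. The only cosmetic difference is that the paper handles the extension step by citing the fact that the weak closure of a uniformly tight family is uniformly tight (\cite{VakhaniaTarieladzeChobanyan}, Theorem I.3.5), whereas you unpack this directly via the portmanteau inequality for open sets --- these are equivalent.
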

\begin{prf}
Let $T>0$. Using similar arguments to those used in the proof of Theorem \ref{theoCorrespInfDivisAndConvSemigroups}. we can show that $\{ \mu_{t} \}_{t \in \Q \cap [0,T] } \subseteq  R(\mu_{T})$, and because $\mu_{T}$ is tight, the root set $R(\mu_{T})$ of $\mu_{T}$ is uniformly tight (see \cite{Siebert:1974}, Satz 6.2 and 6.4), and hence $\{ \mu_{t} \}_{t \in \Q \cap [0,T] }$ is also uniformly tight. 

Now note that for each $r \in \I \cup [0,T]$ the continuity of the semigroup $\{ \mu_{t} \}_{t \geq 0}$ shows that $\mu_{r}= \lim_{q \searrow r, q \in \Q \cap [0,T]} \mu_{q}$ in the weak topology. Therefore $\{ \mu_{t} \}_{t \in [0,T] }$ is in the weak closure $\overline{\{ \mu_{t} \}_{t \in \Q \cap [0,T] }}$ of $\{ \mu_{t} \}_{t \in \Q \cap [0,T] }$. But because the weak closure of an uniformly tight family in $\goth{M}_{R}^{1}(\Phi'_{\beta})$ is also uniformly tight (see \cite{VakhaniaTarieladzeChobanyan}, Theorem I.3.5), then it follows that $\overline{\{ \mu_{t} \}_{t \in \Q \cap [0,T] }}$ is uniformly tight and hence $\{ \mu_{t} \}_{t \in [0,T] }$ is uniformly tight too. 
\end{prf}

\subsection{L\'{e}vy Processes and Cylindrical L\'{e}vy Processes} \label{subSectionLPCLP}

From now on and unless otherwise specified, $\Phi$ will always be a nuclear space over $\R$.

We start with our definition of L\'{e}vy processes on the dual of a nuclear space. 

\begin{defi} \label{defiLevyProcess}
A $\Phi'_{\beta}$-valued process $L=\left\{ L_{t} \right\}_{t\geq 0}$ is called a \emph{L\'{e}vy process} if it satisfies:
\begin{enumerate}
\item $L_{0}=0$ a.s.
\item $L$ has \emph{independent increments}, i.e. for any $n \in \N$, $0 \leq t_{1}< t_{2} < \dots < t_{n} < \infty$ the $\Phi'_{\beta}$-valued random variables $L_{t_{1}},L_{t_{2}}-L_{t_{1}}, \dots, L_{t_{n}}-L_{t_{n-1}}$ are independent.  
\item L has \emph{stationary increments}, i.e. for any $0 \leq s \leq t$, $L_{t}-L_{s}$ and $L_{t-s}$ are identically distributed. 
\item For every $t \geq 0$ the distribution $\mu_{t}$ of $L_{t}$ is a Radon measure and the mapping $t \mapsto \mu_{t}$ from $[0, \infty)$ into $\goth{M}_{R}^{1}(\Phi'_{\beta})$ is continuous at $0$ in the weak topology.
\end{enumerate}
\end{defi}

The probability distributions of a $\Phi'_{\beta}$-valued L\'{e}vy process satisfy the following properties:

\begin{theo} \label{theoLevyDefinesConvSemig}
If $L=\left\{ L_{t} \right\}_{t\geq 0}$ is a L\'{e}vy process in $\Phi'_{\beta}$, the family of probability distributions $\{ \mu_{L_{t}} \}_{t\geq 0}$ of $L$ is a continuous convolution semigroup in $\goth{M}_{R}^{1}(\Phi'_{\beta})$. Moreover each $\mu_{L_{t}}$ is infinitely divisible for every $t \geq 0$. Furthermore if $\Phi'_{\beta}$ is quasi-complete, then for each $T>0$ the family $\{ \mu_{L_{t}}: t \in [0,T]\}$ is uniformly tight.
\end{theo}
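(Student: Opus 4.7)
My plan is to split Theorem 3.5 into three assertions---the continuous convolution semigroup structure, the infinite divisibility of each $\mu_{L_t}$, and the uniform tightness claim under barrelledness---and to address them in order. The main obstacle will be the full weak continuity of the family $\{\mu_{L_t}\}_{t \geq 0}$; the other two pieces are essentially direct consequences of what has already been set up in this section.

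For the semigroup structure, I would write $L_{s+t} = L_s + (L_{s+t} - L_s)$ and use the independent and stationary increments axioms to conclude that $L_s$ and $L_{s+t} - L_s$ are independent, with the latter distributed as $L_t$; hence $\mu_{L_{s+t}} = \mu_{L_s} \ast \mu_{L_t}$, and combined with $\mu_{L_0} = \delta_0$ (from $L_0 = 0$ a.s.) this is the semigroup identity. Infinite divisibility of each $\mu_{L_t}$ then follows immediately from $\mu_{L_t} = (\mu_{L_{t/n}})^{\ast n}$ with $\mu_{L_{t/n}} \in \goth{M}_{R}^{1}(\Phi'_{\beta})$ by axiom (4) of Definition 3.4, or equivalently by Proposition 3.1 applied to the semigroup.

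For weak continuity of the map $t \mapsto \mu_{L_t}$, the case $t = 0$ is precisely hypothesis (4). Right continuity at arbitrary $s > 0$ follows from $\mu_{L_{s+h}} = \mu_{L_s} \ast \mu_{L_h}$, the weak convergence $\mu_{L_h} \to \delta_0$ as $h \to 0^+$, and the separate weak continuity of the convolution map $\nu \mapsto \mu_{L_s} \ast \nu$. Left continuity is \emph{the main obstacle}: from $\mu_{L_s} = \mu_{L_{s-h}} \ast \mu_{L_h}$, passing to characteristic functions and using that $\widehat{\mu}_{L_s}(\phi)$ is nowhere vanishing (because $L_s[\phi]$ is a one-dimensional infinitely divisible random variable, via $\mu_{L_s} = (\mu_{L_{s/n}})^{\ast n}$), I obtain pointwise convergence $\widehat{\mu}_{L_{s-h}}(\phi) \to \widehat{\mu}_{L_s}(\phi)$ for every $\phi \in \Phi$. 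Upgrading this to weak convergence of Radon measures on $\Phi'_\beta$ is the nontrivial step. I would mimic the argument in the proof of Theorem 3.2: observe that $\{\mu_{L_{s-h}}: h \in (0, s]\}$ lies in the root set of the Radon measure $\mu_{L_s}$, invoke Siebert's root-compactness results to deduce uniform tightness of this family, extract via Prokhorov a weakly convergent subsequence with limit $\rho$, and finally identify $\rho = \mu_{L_s}$ by joint weak continuity of convolution at $(\rho, \delta_0)$, which yields $\rho \ast \delta_0 = \mu_{L_s}$.

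Finally, when $\Phi$ is in addition barrelled, the strong dual $\Phi'_\beta$ is quasi-complete (a standard result in locally convex space theory), so Lemma 3.3 applies directly to the continuous convolution semigroup just constructed and delivers the uniform tightness of $\{\mu_{L_t}: t \in [0,T]\}$ on every bounded interval $[0,T]$. This step is essentially a direct citation once the semigroup structure and its continuity have been established.
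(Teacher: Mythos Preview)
Your approach aligns with the paper's on all three assertions: the semigroup identity from independent stationary increments, infinite divisibility via Proposition~3.1, and uniform tightness under barrelledness from quasi-completeness of $\Phi'_\beta$ together with Lemma~3.3. The paper's proof is far terser on weak continuity: it simply writes ``the weak continuity is part of our definition of L\'{e}vy process,'' treating the passage from continuity at $0$ (which is all Definition~3.4(4) actually provides) to full continuity as a standard fact for convolution semigroups, whereas you work this out explicitly.

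Two technical caveats on your left-continuity argument. First, membership of $\mu_{L_{s-h}}$ in the root set $R(\mu_{L_s})$ is only immediate when $(s-h)/s \in \Q$; you should restrict to such $h$, obtain uniform tightness of $\{\mu_{L_t}: t/s \in \Q \cap [0,1]\}$ from Siebert, and then pass to the full interval via closure under weak limits exactly as in the proof of Lemma~3.3. Second, Siebert's root-compactness theorem (Satz~6.2 and~6.4 in \cite{Siebert:1974}, as cited in Theorem~3.2) requires $\Phi'_\beta$ to be quasi-complete, which you have only under the barrelledness hypothesis; so as written your route to left continuity is justified only in the barrelled case. The paper's one-line invocation of the definition glosses over the very same point, so this is not a defect of your proof relative to the paper's---but it is worth being aware of.
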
 
\begin{prf}
The semigroup property of $\{ \mu_{L_{t}} \}_{t\geq 0}$ is an easy consequence of the stationary and independent increments properties of $L$. The weak continuity is part of our definition of L\'{e}vy process. The fact that each $\mu_{L_{t}}$ is infinitely divisible follows from Proposition \ref{propConvoSemiIsInfinDivisible}. If $\Phi'_{\beta}$ is quasi-complete the uniform tightness of $\{ \mu_{L_{t}}: t \in [0,T]\}$ follows from Lemma \ref{lemmLocalUnifTightnessConvSemigroup}.  
\end{prf}

Following the definition given in Applebaum and Riedle \cite{ApplebaumRiedle:2010} for cylindrical L\'{e}vy processes in Banach spaces, we introduce the following definition. 

\begin{defi} A cylindrical process $L=\{ L_{t} \}_{t \geq 0}$ in $\Phi'$ is said to be a \emph{cylindrical L\'{e}vy process} if  $\forall \, n\in \N$, $\phi_{1}, \dots, \phi_{n} \in \Phi$, the $\R^{n}$-valued process $\{ (L_{t}(\phi_{1}), \dots, L_{t}(\phi_{n})) \}_{t \geq 0}$ is a L\'{e}vy process.  
\end{defi}

\begin{lemm} \label{lemmLevyProcessIsCylindricalLevy}
Every $\Phi'_{\beta}$-valued L\'{e}vy process $L=\{ L_{t} \}_{t \geq 0}$ determines a cylindrical L\'{e}vy process in $\Phi'$.    
\end{lemm}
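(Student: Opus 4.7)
The strategy is to fix $n \in \N$ and $\phi_{1}, \dots, \phi_{n} \in \Phi$, set $Y_{t} \defeq \pi_{\phi_{1}, \dots, \phi_{n}}(L_{t})=(L_{t}[\phi_{1}], \dots, L_{t}[\phi_{n}])$, and verify the four standard axioms of an $\R^{n}$-valued L\'{e}vy process. The crucial structural fact I will invoke repeatedly is that the map $\pi_{\phi_{1}, \dots, \phi_{n}}: \Phi'_{\beta} \rightarrow \R^{n}$ defined in \eqref{defiMapProjectionCylinder} is continuous and linear (hence Borel-measurable), so every property of $L$ that is stable under continuous linear pushforward automatically transfers to $Y=\{Y_{t}\}_{t \geq 0}$.

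First, $L_{0}=0$ a.s.\ immediately gives $Y_{0}=0$ a.s. For independent increments, I would fix $0 \leq t_{0}<t_{1}<\dots<t_{k}$ and observe that by linearity of $\pi_{\phi_{1}, \dots, \phi_{n}}$,
$$ Y_{t_{j}}-Y_{t_{j-1}}=\pi_{\phi_{1}, \dots, \phi_{n}}(L_{t_{j}}-L_{t_{j-1}}), \qquad j=1, \dots, k.$$
Since the $\Phi'_{\beta}$-valued random variables $L_{t_{j}}-L_{t_{j-1}}$ are independent by Definition \ref{defiLevyProcess}(2) and independence is preserved under Borel-measurable functions applied coordinatewise, the $\R^{n}$-valued random vectors $Y_{t_{j}}-Y_{t_{j-1}}$ are independent. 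Stationary increments follow by the same pushforward argument: since $L_{t}-L_{s} \stackrel{d}{=} L_{t-s}$ as $\Phi'_{\beta}$-valued random variables, applying $\pi_{\phi_{1}, \dots, \phi_{n}}$ yields $Y_{t}-Y_{s} \stackrel{d}{=} Y_{t-s}$.

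For stochastic continuity, Definition \ref{defiLevyProcess}(4) gives $\mu_{L_{t}} \to \delta_{0}$ weakly on $\Phi'_{\beta}$ as $t \searrow 0$. The continuous mapping theorem together with the continuity of $\pi_{\phi_{1}, \dots, \phi_{n}}: \Phi'_{\beta} \rightarrow \R^{n}$ then yields
$$\mu_{Y_{t}}=\mu_{L_{t}} \circ \pi_{\phi_{1}, \dots, \phi_{n}}^{-1} \; \WCM{}{} \; \delta_{0} \circ \pi_{\phi_{1}, \dots, \phi_{n}}^{-1}=\delta_{0} \quad \text{on } \R^{n},$$
i.e.\ $Y_{t} \to 0$ in probability as $t \searrow 0$. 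Combined with the stationary increments property already established, this upgrades to full stochastic continuity of $Y$ at every $t \geq 0$, which is the standard continuity requirement for a finite-dimensional L\'{e}vy process. Hence $Y$ satisfies all four defining properties.

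There is no real obstacle here; the argument is essentially bookkeeping once one recognizes that every property in Definition \ref{defiLevyProcess} is expressed in terms that survive pushforward by a continuous linear map. The only mild care needed is to confirm that each $L_{t}[\phi]$ is genuinely a real random variable (which holds because $L_{t}:\Omega \rightarrow \Phi'_{\beta}$ is Borel-measurable and the evaluation $f \mapsto f[\phi]$ is continuous on $\Phi'_{\beta}$) and that the definition of a cylindrical L\'{e}vy process demands the joint process for arbitrary finite families of test functions, which is precisely what the coordinatewise pushforward above produces.
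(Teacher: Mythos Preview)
Your proposal is correct and follows essentially the same approach as the paper: fix $\phi_{1},\dots,\phi_{n}$, push $L$ forward through the continuous linear map $\pi_{\phi_{1},\dots,\phi_{n}}$, and verify the four axioms of an $\R^{n}$-valued L\'{e}vy process one by one. The paper's proof is terser---it cites Proposition~\ref{independenceStochProcessDualSpace} for the increment properties and an external reference for stochastic continuity---whereas you spell out the pushforward and continuous-mapping-theorem arguments directly, but the underlying logic is identical.
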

\begin{prf}
Let $n\in \N$ and $\phi_{1}, \dots, \phi_{n} \in \Phi$. It is clear that $(L_{0}[\phi_{1}], \dots, L_{0}[\phi_{n}])=0$ $\Prob$-a.e. The fact that 
$\{ (L_{t}[\phi_{1}], \dots, L_{t}[\phi_{n}]) \}_{t \geq 0}$ has stationary and independent increments follows from the corresponding properties of $L$ as a $\Phi'_{\beta}$-valued process (see Proposition \ref{independenceStochProcessDualSpace}). Finally the stochastic continuity of $\{ (L_{t}[\phi_{1}], \dots, L_{t}[\phi_{n}]) \}_{t \geq 0}$ is a consequence of the weak continuity of the map $t \mapsto \mu_{t}$ (see \cite{ApplebaumLPSC}, Proposition 1.4.1). 
\end{prf}

The following result gives sufficient conditions for a cylindrical L\'{e}vy process to have a version that is a L\'{e}vy process in $\Phi'_{\beta}$. This result will be of great importance in the proof of the one-to-one correspondence of L\'{e}vy processes and infinitely divisible measures in Section \ref{subsectionCLPIDM}.   
		
\begin{theo} \label{theoCylindrLevyProcessHaveLevyCadlagVersion}
Let $L=\{ L_{t} \}_{t \geq 0}$ be a cylindrical L\'{e}vy process in $\Phi'$ such that for every $T > 0$, the family $\{ L_{t}: t \in [0,T] \}$ of linear maps from $\Phi$ into $L^{0} \ProbSpace$ is equicontinuous.  
Then there exists a countably Hilbertian topology $\vartheta_{L}$ on $\Phi$ and a $(\widetilde{\Phi_{\vartheta_{L}}})'_{\beta}$-valued c\`{a}dl\`{a}g process $Y= \{ Y_{t} \}_{t \geq 0}$, such that for every $\phi \in \Phi$, $Y[\phi]= \{ Y_{t}[\phi] \}_{t \geq 0}$ is a version of $L(\phi)= \{ L_{t}(\phi) \}_{t \geq 0}$. Moreover $Y$ is a $\Phi'_{\beta}$-valued, regular, c\`{a}dl\`{a}g L\'{e}vy process that is a version of $L$ and that is unique up to indistinguishable versions. 
\end{theo}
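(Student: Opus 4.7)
The plan is to invoke the Regularization Theorem \ref{theoRegularizationTheoremCadlagContinuousVersion} to produce the version $Y$, and then to upgrade the cylindrical L\'{e}vy structure of $L$ to a genuine $\Phi'_{\beta}$-valued L\'{e}vy structure on $Y$ using the regularity of $Y$ together with Propositions \ref{propCondiIndistingProcess} and \ref{independenceStochProcessDualSpace}.

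First I would verify the two hypotheses of Theorem \ref{theoRegularizationTheoremCadlagContinuousVersion}. The local equicontinuity condition is part of the assumption. For the pointwise c\`{a}dl\`{a}g condition, taking $n=1$ in the definition of a cylindrical L\'{e}vy process shows that each real-valued process $L(\phi)=\{L_{t}(\phi)\}_{t\geq 0}$ is a real-valued L\'{e}vy process, and hence admits a c\`{a}dl\`{a}g version by standard one-dimensional theory. Theorem \ref{theoRegularizationTheoremCadlagContinuousVersion} then delivers the countably Hilbertian topology $\vartheta_{L}$ and the $(\widetilde{\Phi_{\vartheta_{L}}})'_{\beta}$-valued c\`{a}dl\`{a}g process $Y$, which is simultaneously a $\Phi'_{\beta}$-valued regular c\`{a}dl\`{a}g version of $L$, unique up to indistinguishability.

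It remains to check that $Y$ satisfies the four items of Definition \ref{defiLevyProcess}. For $Y_{0}=0$ a.s., note that $Y_{0}[\phi]=L_{0}(\phi)=0$ a.s. for every $\phi\in\Phi$; since $Y_{0}$ is regular and $\delta_{0}$ is Radon, equality of their cylindrical distributions forces $\mu_{Y_{0}}=\delta_{0}$. For independent and stationary increments, fix $0=t_{0}<t_{1}<\dots<t_{n}$ and arbitrary $\phi_{1},\dots,\phi_{k}\in\Phi$: by hypothesis $\{(L_{t}(\phi_{1}),\dots,L_{t}(\phi_{k}))\}_{t\geq 0}$ is an $\R^{k}$-valued L\'{e}vy process, so its increments are independent and individually equal in law to $(L_{t_{j}-t_{j-1}}(\phi_{1}),\dots,L_{t_{j}-t_{j-1}}(\phi_{k}))$. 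Transferring these statements from $L$ to $Y$ via the version property and then applying Proposition \ref{independenceStochProcessDualSpace} to the constant-in-time regular processes $X^{j}\equiv Y_{t_{j}}-Y_{t_{j-1}}$ yields independence of the $\Phi'_{\beta}$-valued increments, while equality of the cylindrical laws of $Y_{s+t}-Y_{s}$ and $Y_{t}$ combined with their Radon property yields stationarity. Weak continuity at $0$ of $t\mapsto\mu_{Y_{t}}$ follows from the a.s.\ right-continuity of the sample paths of $Y$, which gives $Y_{t}\to 0$ almost surely in $\Phi'_{\beta}$, so by bounded convergence $\int f\,d\mu_{Y_{t}}\to f(0)$ for every bounded continuous $f$ on $\Phi'_{\beta}$.

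Uniqueness up to indistinguishability for the L\'{e}vy version follows immediately from Proposition \ref{propCondiIndistingProcess}, since any two $\Phi'_{\beta}$-valued regular c\`{a}dl\`{a}g versions of $L$ are pointwise $\Prob$-a.s.\ equal in $\phi$. The main conceptual obstacle is precisely the passage from the cylindrical L\'{e}vy identities, which come automatically from the hypothesis, to their Borel-level counterparts for $Y$; this is exactly where the regularity produced by the Regularization Theorem is essential, since cylindrical identities determine Radon measures on $\Phi'_{\beta}$ but not arbitrary Borel measures.
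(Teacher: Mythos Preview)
Your proof is correct and follows the same overall strategy as the paper: invoke the Regularization Theorem \ref{theoRegularizationTheoremCadlagContinuousVersion} to produce $Y$, then lift the cylindrical L\'{e}vy structure to a genuine one via the regularity of $Y$ together with Propositions \ref{propCondiIndistingProcess} and \ref{independenceStochProcessDualSpace}. The one place where you diverge is the verification of Definition \ref{defiLevyProcess}(4): you use the c\`{a}dl\`{a}g paths of $Y$ directly (a.s.\ convergence $Y_{t}\to Y_{0}=0$ plus dominated convergence) to get weak continuity of $t\mapsto\mu_{Y_{t}}$ at $0$, whereas the paper establishes weak continuity at \emph{every} $t$ by combining pointwise convergence of characteristic functions with the equicontinuity hypothesis, which yields uniform tightness of $\{\mu_{Y_{r}}\}_{r\in[0,T]}$ and then Prokhorov's theorem. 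Since the definition only demands continuity at $0$, your shortcut is valid and more economical; the paper's argument gives a slightly stronger conclusion but at the cost of invoking the tightness machinery.
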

\begin{prf} 
First as for each $\phi \in \Phi$ the real-valued process $L(\phi)=\{ L_{t}(\phi) \}_{t \geq 0}$ is a L\'{e}vy process, then it has a c\`{a}dl\`{a}g version (see Theorem 2.1.8 of Applebaum \cite{ApplebaumLPSC}, p.87). Hence $L$ satisfies all the conditions of the regularization theorem (Theorem \ref{theoRegularizationTheoremCadlagContinuousVersion}) and this theorem shows the existence of a countably Hilbertian topology $\vartheta_{L}$ on $\Phi$ and a $(\widetilde{\Phi_{\vartheta_{L}}})'_{\beta}$-valued c\`{a}dl\`{a}g process $Y= \{ Y_{t} \}_{t \geq 0}$, such that for every $\phi \in \Phi$, $Y[\phi]= \{ Y_{t}[\phi] \}_{t \geq 0}$ is a version of $L(\phi)= \{ L_{t}(\phi) \}_{t \geq 0}$. Moreover, it is a consequence of the regularization theorem that $Y$ is a $\Phi'_{\beta}$-valued, regular, c\`{a}dl\`{a}g version of $L$ that is unique up to indistinguishable versions.

Our next step is to show that $Y$ is a $\Phi'_{\beta}$-valued L\'{e}vy process. First, as $Y_{0}[\phi]=L_{0}(\phi)=0$ $\Prob$-a.e. for every $\phi \in \Phi$, it follows that $Y_{0}=0$ $\Prob$-a.e. (Proposition \ref{propCondiIndistingProcess}). Second, as for each $\phi_{1}, \dots, \phi_{n} \in \Phi$, the $\R^{n}$-valued process  
$\{ (L_{t}(\phi_{1}), \dots, L_{t}(\phi_{n})) \}_{t \geq 0}$ has independent and stationary increments, and because for each $t \geq 0$ we have that  
$$ (L_{t}(\phi_{1}), \dots, L_{t}(\phi_{n}))= (Y_{t}[\phi_{1}], \dots, Y_{t}[\phi_{n}]), \quad \Prob- \mbox{a.e.}, $$
then the $\R^{n}$-valued process  
$\{ (Y_{t}[\phi_{1}], \dots, Y_{t}[\phi_{n}]) \}_{t \geq 0}$ also has independent and stationary increments for every $\phi_{1}, \dots, \phi_{n} \in \Phi$. Hence because $Y$ is a $\Phi'_{\beta}$-valued regular process, it then follows from Propositions \ref{propCondiIndistingProcess} and  \ref{independenceStochProcessDualSpace} that $Y$ has independent and stationary increments.

Now the fact that $Y$ is a $\Phi'_{\beta}$-valued regular process and Theorem \ref{theoCharacterizationRegularRV}  show that for each $t \geq 0$ the probability distribution $\mu_{t}$ of $Y_{t}$ is a Radon measure. 

Our final step to show that $Y$ is a $\Phi'_{\beta}$-valued L\'{e}vy process is to prove that the mapping $t \mapsto \mu_{t}$ from $[0, \infty)$ into $\goth{M}_{R}^{1}(\Phi'_{\beta})$ is continuous in the weak topology. 

Let $t \geq 0$. Our objective is to show that for any net $\{ s_{\alpha} \}$ in $[0,\infty)$ such that $\lim_{\alpha} s_{\alpha}=t$ we have $\lim_{\alpha} \mu_{s_{\alpha}}=\mu_{t}$ in the weak topology on $\goth{M}_{R}^{1}(\Phi'_{\beta})$. As convergence of filter bases is only determined by terminal sets, we can choose without loss of generality some sufficiently large $T>0$ and consider only nets in $[0,T]$ satisfying $\{ s_{\alpha} \}$ such that $\lim_{\alpha} s_{\alpha}=t$. Let $\{ s_{\alpha} \}$ be such a net. 

First as for each $\phi \in \Phi$, $Y[\phi]= \{ Y_{t}[\phi] \}_{t \geq 0}$ is stochastically continuous, it follows that the family $\{ Y_{s_{\alpha}}[\phi] \}$ converges in probability to $Y_{t}[\phi]$. Now this last property in turn shows that $\lim_{\alpha} \widehat{\mu}_{s_{\alpha}}(\phi)=\widehat{\mu}_{t}(\phi)$ for every $\phi \in \Phi$. 

Now for each $r \geq 0$ denote by $\nu_{r}$ the cylindrical distribution of the cylindrical random variable $L_{r}$. Then the equicontinuity of the family $\{ L_{r}: r \in [0,T] \}$ of linear maps from $\Phi$ into $L^{0} \ProbSpace$ implies that the family of characteristic functions $\{ \widehat{\nu}_{r} \}_{r \in [0,T]}$ is equicontinuous at zero. But as for each $r \geq 0$, $\widehat{\nu}_{r}(\phi) = \widehat{\mu}_{r}(\phi)$ for all $\phi \in \Phi$, we then have that the family of characteristic functions $\{ \widehat{\mu}_{r} \}_{r \in [0,T]}$ of $\{ Y_{r} \}_{r \in [0,T]}$ is equicontinuous at zero. However, as $\Phi$ is a nuclear space the equicontinuity of $\{ \widehat{\mu}_{r} \}_{r \in [0,T]}$ at zero implies that $\{ \mu_{r} \}_{r \in [0,T]}$ is uniformly tight (see \cite{DaleckyFomin}, Lemma III.2.3, p.103-4). This last in turn shows that $\{ \mu_{s_{\alpha}} \}$ is uniformly tight, and the Prokhorov's theorem (see \cite{DaleckyFomin}, Theorem III.2.1, p.98) implies that the family $\{ \mu_{s_{\alpha}} \}$ is relatively compact in the weak topology. Because we also have that $\lim_{\alpha} \widehat{\mu}_{s_{\alpha}}(\phi)=\widehat{\mu}_{t}(\phi)$ for every $\phi \in \Phi$, we then conclude that $\lim_{\alpha} \mu_{s_{\alpha}}=\mu_{t}$ in the weak topology (see \cite{VakhaniaTarieladzeChobanyan}, Theorem IV.3.1, p.224-5). Consequently, the map $t \mapsto \mu_{t}$ is continuous in the weak topology and $Y$ is a $\Phi'_{\beta}$-valued L\'{e}vy process.   
\end{prf}

An important variation of the above theorem is the following:

\begin{theo} \label{theoCylindrLevyHilbContHaveLevyCadlagVersHilbSpace}
Let $L=\{ L_{t} \}_{t \geq 0}$ be a cylindrical L\'{e}vy process in $\Phi'$. Assume that there exist $n \in \N$ and a continuous Hilbertian semi-norm $\varrho$ on $\Phi$ such that for all $T>0$ there is a $C(T)>0$ such that 
\begin{equation*} 
\Exp \left( \sup_{t \in [0,T]} \abs{L_{t}(\phi)}^{n} \right) \leq C(T) \varrho(\phi)^{n}, \quad \forall \, \phi \in \Phi.
\end{equation*} 
Then there exists a continuous Hilbertian semi-norm $q$ on $\Phi$, $\varrho \leq q$, such that $i_{\varrho,q}$ is Hilbert-Schmidt and there exists a $\Phi'_{q}$-valued c\`{a}dl\`{a}g L\'{e}vy process $Y=\{ Y_{t}\}_{t \geq 0}$, satisfying:
\begin{enumerate}[label=(\alph*)]
\item For every $\phi \in \Phi$, $Y[\phi]= \{ Y_{t}[\phi] \}_{t \geq 0}$ is a version of $L(\phi)= \{ L_{t}(\phi) \}_{t \geq 0}$, 
\item For every $T>0$, $\Exp \left( \sup_{t \in [0,T]} q'(Y_{t})^{n} \right) < \infty$.   
\end{enumerate} 
Moreover $Y$ is a $\Phi'_{\beta}$-valued, regular, c\`{a}dl\`{a}g version of $L$ that is unique up to indistinguishable versions. Furthermore if the real-valued process $L(\phi)$ is continuous for each $\phi \in \Phi$, then $Y$ can be chosen to be continuous in $\Phi'_{q}$ and hence in $\Phi'_{\beta}$. 
\end{theo}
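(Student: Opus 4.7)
The plan is to reduce the statement to the combination of two earlier results: Theorem \ref{theoExistenceCadlagContVersionHilbertSpaceUniformBoundedMoments} supplies the Hilbert-space valued càdlàg process together with the moment bound, while Theorem \ref{theoCylindrLevyProcessHaveLevyCadlagVersion} supplies the Lévy property. The role of uniqueness (up to indistinguishability) of the $\Phi'_{\beta}$-valued version is what allows us to glue these two outputs into a single process.

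First I would verify the hypotheses of Theorem \ref{theoExistenceCadlagContVersionHilbertSpaceUniformBoundedMoments}. Since $L$ is a cylindrical Lévy process, for each $\phi \in \Phi$ the real-valued process $L(\phi) = \{ L_{t}(\phi) \}_{t \geq 0}$ is a Lévy process and therefore admits a càdlàg version (Applebaum \cite{ApplebaumLPSC}, Theorem 2.1.8). The moment bound is exactly the second hypothesis, so the theorem produces a continuous Hilbertian semi-norm $q$ with $\varrho \leq q$ and $i_{\varrho,q}$ Hilbert-Schmidt, and a $\Phi'_{q}$-valued càdlàg process $Y = \{Y_{t}\}_{t \geq 0}$ satisfying (a) and (b), which is moreover a $\Phi'_{\beta}$-valued regular càdlàg version of $L$ unique up to indistinguishability.

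Next I must promote $Y$ to a Lévy process. The moment bound together with Markov's inequality gives, for every $\epsilon > 0$, $T>0$ and $\phi \in \Phi$,
\begin{equation*}
\Prob \left( \abs{L_{t}(\phi)} > \epsilon \right) \leq \frac{C(T)}{\epsilon^{n}} \varrho(\phi)^{n}, \quad \forall \, t \in [0,T],
\end{equation*}
from which the equicontinuity of the family $\{ L_{t} : t \in [0,T]\}$ of maps $\Phi \to L^{0} \ProbSpace$ is immediate. Hence the hypotheses of Theorem \ref{theoCylindrLevyProcessHaveLevyCadlagVersion} hold, and its conclusion produces a $\Phi'_{\beta}$-valued regular càdlàg Lévy version of $L$. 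By the uniqueness clause of the regularization theorem, this Lévy version must be indistinguishable from the $Y$ obtained above, so $Y$ itself is a $\Phi'_{\beta}$-valued Lévy process, and (a)–(b) continue to hold.

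Finally, for the continuity statement: if each $L(\phi)$ has a continuous version, then Theorem \ref{theoExistenceCadlagContVersionHilbertSpaceUniformBoundedMoments} yields a $\Phi'_{q}$-valued \emph{continuous} process $Y$, and continuity as a $\Phi'_{\beta}$-valued process follows because the canonical inclusion $i_{q}' : \Phi'_{q} \hookrightarrow \Phi'_{\beta}$ is linear and continuous. The main (mild) obstacle is bookkeeping: one must ensure that the two processes produced by the two regularization theorems can be identified, and this is handled precisely by the uniqueness-up-to-indistinguishability clauses; no new analytic estimate is required beyond the Markov-inequality argument for equicontinuity.
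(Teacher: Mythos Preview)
Your approach is correct and essentially matches the paper's, which invokes Theorem~\ref{theoExistenceCadlagContVersionHilbertSpaceUniformBoundedMoments} for the existence of $Y$ with properties (a)--(b) and then says that ``similar arguments to those used in the proof of Theorem~\ref{theoCylindrLevyProcessHaveLevyCadlagVersion}'' show $Y$ is a $\Phi'_{q}$-valued L\'{e}vy process. Your variant---applying Theorem~\ref{theoCylindrLevyProcessHaveLevyCadlagVersion} as a black box and then identifying its output with $Y$ via the uniqueness clause---is a legitimate shortcut for the $\Phi'_{\beta}$-valued L\'{e}vy property.

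One small point you should close: the statement requires $Y$ to be a L\'{e}vy process \emph{in $\Phi'_{q}$}, not merely in $\Phi'_{\beta}$, and your identification argument only delivers the latter. This is routine since $\Phi'_{q}$ is a separable Hilbert space: $\mathcal{B}(\Phi'_{q})$ is generated by the evaluations $f\mapsto f[\phi]$ with $\phi\in\Phi$, so independence and stationarity of increments transfer from $\Phi'_{\beta}$ to $\Phi'_{q}$; the distributions $\mu_{Y_{t}}$ are automatically Radon on the Polish space $\Phi'_{q}$; and weak continuity at $0$ follows from the $\Phi'_{q}$-c\`{a}dl\`{a}g property together with $Y_{0}=0$. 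The paper's phrase ``similar arguments'' is presumably doing exactly this adaptation; you just need to say one sentence to that effect.
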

\begin{prf}
The existence of the $\Phi'_{q}$-valued c\`{a}dl\`{a}g process $Y=\{ Y_{t}\}_{t \geq 0}$ satisfying the conditions in the statement of the theorem follows from Theorem \ref{theoExistenceCadlagContVersionHilbertSpaceUniformBoundedMoments}. Finally similar arguments to those used in the proof of Theorem \ref{theoCylindrLevyProcessHaveLevyCadlagVersion} show that $Y$ is a $\Phi'_{q}$-valued L\'{e}vy process. 
\end{prf}

We now provide a sufficient condition for the existence of a c\`{a}dl\`{a}g version for a $\Phi'_{\beta}$-valued L\'{e}vy process. 

\begin{theo} \label{theoConditCadlagVersionLevyProc}
Let $L=\{ L_{t} \}_{t \geq 0}$ be a $\Phi'_{\beta}$-valued L\'{e}vy process. Suppose that for every $T > 0$, the family $\{ L_{t}: t \in [0,T] \}$ of linear maps from $\Phi$ into $L^{0} \ProbSpace$ given by $\phi \mapsto L_{t}[\phi]$ is equicontinuous. 
Then $L$ has a $\Phi'_{\beta}$-valued, regular, c\`{a}dl\`{a}g version $\tilde{L}=\{ \tilde{L}_{t} \}_{t \geq 0}$ that is also a L\'{e}vy process. Moreover there exists a countably Hilbertian topology $\vartheta_{L}$ on $\Phi$ such that $\tilde{L}$ is a $(\widetilde{\Phi_{\vartheta_{L}}})'_{\beta}$-valued c\`{a}dl\`{a}g process.  
\end{theo}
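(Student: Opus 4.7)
The plan is to reduce this theorem to the cylindrical case treated in Theorem \ref{theoCylindrLevyProcessHaveLevyCadlagVersion}, which does essentially all of the work.

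First, I would observe that the given $\Phi'_{\beta}$-valued L\'{e}vy process $L$ determines a cylindrical process in $\Phi'$ via $\phi \mapsto L_{t}[\phi]$, and by Lemma \ref{lemmLevyProcessIsCylindricalLevy} this cylindrical process is in fact a cylindrical L\'{e}vy process. The equicontinuity assumption on $\{ L_{t}: t \in [0,T]\}$ is exactly the hypothesis required by Theorem \ref{theoCylindrLevyProcessHaveLevyCadlagVersion}.

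Second, applying Theorem \ref{theoCylindrLevyProcessHaveLevyCadlagVersion} directly to this cylindrical L\'{e}vy process would yield a countably Hilbertian topology $\vartheta_{L}$ on $\Phi$ together with a $(\widetilde{\Phi_{\vartheta_{L}}})'_{\beta}$-valued c\`{a}dl\`{a}g process $\tilde{L}=\{ \tilde{L}_{t} \}_{t \geq 0}$ which is a $\Phi'_{\beta}$-valued, regular, c\`{a}dl\`{a}g L\'{e}vy version of the cylindrical process, unique up to indistinguishability.

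Third, I would need to verify that $\tilde{L}$ is a version of the original $\Phi'_{\beta}$-valued process $L$ (and not merely of the cylindrical process it determines). Since both $L$ and $\tilde{L}$ are $\Phi'_{\beta}$-valued regular processes (regularity of $L$ follows from Theorem \ref{theoCharacterizationRegularRV} because $\mu_{L_t}$ is a Radon measure by Definition \ref{defiLevyProcess}(4), and continuity of $L_t:\Phi\to L^0\ProbSpace$ follows from the equicontinuity hypothesis), and since for each $\phi\in\Phi$ the one-dimensional process $\tilde{L}[\phi]$ is a version of $L[\phi]$, Proposition \ref{propCondiIndistingProcess} gives that $\tilde{L}$ is a version of $L$ as $\Phi'_{\beta}$-valued processes, which completes the proof.

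I do not expect any substantial obstacle here: the theorem is essentially a packaging of Lemma \ref{lemmLevyProcessIsCylindricalLevy} and Theorem \ref{theoCylindrLevyProcessHaveLevyCadlagVersion}. The only mildly delicate point is making sure the version obtained from the cylindrical framework is indeed a version of the original $\Phi'_{\beta}$-valued process, and this is handled cleanly by Proposition \ref{propCondiIndistingProcess} once the regularity of $L$ is confirmed via Theorem \ref{theoCharacterizationRegularRV}.
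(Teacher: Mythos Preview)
Your proposal is correct and follows essentially the same route as the paper: pass to the cylindrical L\'evy process via Lemma \ref{lemmLevyProcessIsCylindricalLevy}, apply Theorem \ref{theoCylindrLevyProcessHaveLevyCadlagVersion}, then use regularity of $L$ and Proposition \ref{propCondiIndistingProcess} to conclude that $\tilde{L}$ is a version of the original $\Phi'_{\beta}$-valued process. One small remark: in your parenthetical justification of the regularity of $L$, the Radon property of $\mu_{L_t}$ alone (condition (3) in Theorem \ref{theoCharacterizationRegularRV}) does not yield regularity unless $\Phi$ is barrelled; the correct and sufficient argument is the one you also give, namely that the equicontinuity hypothesis makes each map $\phi \mapsto L_t[\phi]$ continuous (condition (2)), which is equivalent to regularity by the implication $(2)\Rightarrow(1)$ in that theorem---this is exactly how the paper argues.
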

\begin{prf}
First note that our assumption on $L$ implies that $L$ is regular. This is because for each $t \geq 0$ the fact that $L_{t}: \Phi \rightarrow L^{0} \ProbSpace$ is continuous shows that $L_{t}$ is a regular random variable in $\Phi'_{\beta}$ (Theorem \ref{theoCharacterizationRegularRV}).  

Now as $L$ is a $\Phi'_{\beta}$-valued L\'{e}vy process the cylindrical process determined by $L$ is a cylindrical L\'{e}vy process (Lemma \ref{lemmLevyProcessIsCylindricalLevy}). But from our assumptions on $L$, this cylindrical L\'{e}vy process satisfies the assumptions in Theorem \ref{theoCylindrLevyProcessHaveLevyCadlagVersion}. Therefore,
there exists a $\Phi'_{\beta}$-valued, regular, c\`{a}dl\`{a}g L\'{e}vy process $\tilde{L}= \{ \tilde{L}_{t} \}_{t \geq 0}$, such that for every $\phi \in \Phi$, $\tilde{L}_{t}[\phi]= L_{t}[\phi]$ $\Prob$-a.e. for each $t \geq 0$. This last property together with the fact that both $\tilde{L}$ and $L$ are regular process shows that $\tilde{L}$ is a version of $L$ (Proposition \ref{propCondiIndistingProcess}). Finally from Theorem \ref{theoCylindrLevyProcessHaveLevyCadlagVersion} there exists a countably Hilbertian topology $\vartheta_{L}$ on $\Phi$ such that $\tilde{L}$ is a $(\widetilde{\Phi_{\vartheta_{L}}})'_{\beta}$-valued c\`{a}dl\`{a}g process. 
\end{prf}

\begin{coro} \label{coroExistCadlagVersionLevyInBarrelledNuclearSpace}
If $\Phi$ is a barrelled nuclear space and $L=\{ L_{t} \}_{t \geq 0}$ is a $\Phi'_{\beta}$-valued L\'{e}vy process, then $L$ has a $\Phi'_{\beta}$-valued c\`{a}dl\`{a}g version satisfying the properties given in Theorem \ref{theoConditCadlagVersionLevyProc}.  
\end{coro}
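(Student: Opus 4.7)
The plan is to chain together the three results immediately preceding the corollary. Specifically, I will deduce the equicontinuity hypothesis of Theorem \ref{theoConditCadlagVersionLevyProc} from the barrelled nuclear assumption, and then invoke that theorem directly.

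First, I would fix $T>0$ and consider the family $\{\mu_{L_t} : t \in [0,T]\}$ of Radon probability distributions on $\Phi'_\beta$. Since $\Phi$ is barrelled nuclear, Theorem \ref{theoLevyDefinesConvSemig} applies and tells me that this family is uniformly tight. The role of the barrelled assumption here is to guarantee that $\Phi'_\beta$ is quasi-complete (via Schaefer IV.6.1), which is the hypothesis needed to apply Lemma \ref{lemmLocalUnifTightnessConvSemigroup} inside the proof of Theorem \ref{theoLevyDefinesConvSemig}.

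Next, I would feed this uniform tightness into Theorem \ref{theoLocalEquiconCharFuncInBarrelledNuclearSpace}. The cylindrical process determined by $L$ (in the sense of Lemma \ref{lemmLevyProcessIsCylindricalLevy}) has, for each $t \geq 0$, cylindrical distribution equal to the restriction of the Radon measure $\mu_{L_t}$. Since $\Phi$ is barrelled nuclear and $\{\mu_{L_t} : t \in [0,T]\}$ is uniformly tight, Theorem \ref{theoLocalEquiconCharFuncInBarrelledNuclearSpace} yields that the family of linear maps $\{L_t : t \in [0,T]\}$ from $\Phi$ into $L^0\ProbSpace$ is equicontinuous.

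Finally, this equicontinuity is precisely the hypothesis of Theorem \ref{theoConditCadlagVersionLevyProc}, so a direct application of that theorem produces the desired $\Phi'_\beta$-valued, regular, càdlàg Lévy version $\tilde{L}$ of $L$, together with the countably Hilbertian topology $\vartheta_L$ on $\Phi$ such that $\tilde{L}$ is a $(\widetilde{\Phi_{\vartheta_L}})'_\beta$-valued càdlàg process. I do not anticipate any real obstacle here: the whole content of the corollary is just that \emph{barrelled nuclear} is strong enough to supply, for free, the equicontinuity that Theorem \ref{theoConditCadlagVersionLevyProc} otherwise requires as a separate hypothesis. The only point deserving a sentence of care is noting that the quasi-completeness of $\Phi'_\beta$ (needed for Theorem \ref{theoLevyDefinesConvSemig}) follows from $\Phi$ being barrelled.
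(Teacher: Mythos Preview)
Your proposal is correct and follows exactly the same three-step chain as the paper's proof: Theorem \ref{theoLevyDefinesConvSemig} for uniform tightness, then Theorem \ref{theoLocalEquiconCharFuncInBarrelledNuclearSpace} for equicontinuity, then Theorem \ref{theoConditCadlagVersionLevyProc} for the conclusion. Your added remark about quasi-completeness of $\Phi'_\beta$ following from barrelledness is accurate and matches the justification already given inside the proof of Theorem \ref{theoLevyDefinesConvSemig}.
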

\begin{prf} First as $\Phi$ is barrelled then $\Phi'_{\beta}$ is quasi-complete (see \cite{Schaefer}, Theorem IV.6.1, p.148). Then given $T>0$, it follows from Theorem \ref{theoLevyDefinesConvSemig} that the family $\{ \mu_{L_{t}}: t \in [0,T]\}$ of its probability distributions is uniformly tight. But because $\Phi$ is barrelled, the family of their characteristic functions $\{ \widehat{\mu}_{L_{t}}: t \in [0,T]\}$ is equicontinuous (see \cite{BogachevMT}, Vol. II, Corollary 7.13.10, p.126), and consequently the family of linear maps $\{ L_{t}: t \in [0,T] \}$ is also equicontinuous (see \cite{VakhaniaTarieladzeChobanyan}, Proposition IV.3.4, p.231). Therefore, $L$ satisfies the assumptions on Theorem \ref{theoConditCadlagVersionLevyProc} and the result follows. 
\end{prf}

Finally the next result provides sufficient conditions for the existence of a c\`{a}dl\`{a}g version that is a L\'{e}vy process with finite $n$-th moment in some of the Hilbert spaces $\Phi'_{q}$. This result will play an important role in our proof of the L\'{e}vy-It\^{o} decomposition in Section \ref{subsectionLID}. 

\begin{theo} \label{theoCondLevyCadlagVersHilbSpace}
Let $L=\{ L_{t} \}_{t \geq 0}$ be a $\Phi'_{\beta}$-valued L\'{e}vy process. 
Assume that there exist $n \in \N$ and a continuous Hilbertian semi-norm $\varrho$ on $\Phi$ such that for all $T>0$ there is a $C(T)>0$ such that 
\begin{equation*} 
\Exp \left( \sup_{t \in [0,T]} \abs{L_{t}[\phi]}^{n} \right) \leq C(T) \varrho(\phi)^{n}, \quad \forall \, \phi \in \Phi.
\end{equation*}
Then there exists a continuous Hilbertian semi-norm $q$ on $\Phi$, $\varrho \leq q$, such that $i_{\varrho,q}$ is Hilbert-Schmidt and a $\Phi'_{q}$-valued, c\`{a}dl\`{a}g (continuous if $L$ is continuous), L\'{e}vy process $\tilde{L}=\{ \tilde{L}_{t} \}_{t \geq 0}$ that is a version of $L$. Moreover $\Exp \left( \sup_{t \in [0,T]} q'(\tilde{L}_{t})^{n} \right) < \infty$ $\forall \, T>0$.   
\end{theo}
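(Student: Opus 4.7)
The plan is to reduce this essentially to Theorem \ref{theoCylindrLevyHilbContHaveLevyCadlagVersHilbSpace}, whose hypotheses are tailored precisely to the moment bound appearing here, and then to upgrade the resulting version of the cylindrical process to a genuine version of the $\Phi'_\beta$-valued process $L$. The key observation is that by Lemma \ref{lemmLevyProcessIsCylindricalLevy}, $L$ determines a cylindrical Lévy process $\{L_t[\phi]\}_{t\geq 0, \phi \in \Phi}$ on $\Phi'$, and the given moment inequality for $L_t[\phi]$ is exactly the hypothesis of Theorem \ref{theoCylindrLevyHilbContHaveLevyCadlagVersHilbSpace} applied to this cylindrical Lévy process.

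First I would invoke Theorem \ref{theoCylindrLevyHilbContHaveLevyCadlagVersHilbSpace} to produce a continuous Hilbertian semi-norm $q$ on $\Phi$ with $\varrho \leq q$ and $i_{\varrho,q}$ Hilbert-Schmidt, together with a $\Phi'_q$-valued càdlàg (continuous if all the $L(\phi)$ are continuous) Lévy process $\tilde{L}=\{\tilde{L}_t\}_{t\geq 0}$ such that for every $\phi \in \Phi$, $\tilde{L}[\phi]$ is a version of $L[\phi]$, and such that $\Exp(\sup_{t \in [0,T]} q'(\tilde{L}_t)^n) < \infty$ for every $T>0$. Regarded through the continuous inclusion $i'_q: \Phi'_q \hookrightarrow \Phi'_\beta$, $\tilde{L}$ is in particular a $\Phi'_\beta$-valued càdlàg process and, by construction, a regular one.

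It remains to verify that $\tilde{L}$ is a $\Phi'_\beta$-valued version of the original process $L$, not merely of its cylindrical counterpart. For this I would apply Proposition \ref{propCondiIndistingProcess}, which requires both $L$ and $\tilde{L}$ to be regular. The regularity of $\tilde{L}$ is already established, so the one check to be made is that $L$ itself is regular. This follows directly from the moment hypothesis: Chebyshev's inequality gives
\begin{equation*}
\Prob(|L_t[\phi]| > \epsilon) \leq \epsilon^{-n} \Exp|L_t[\phi]|^n \leq \epsilon^{-n} C(T) \varrho(\phi)^n, \qquad t \in [0,T],
\end{equation*}
so each map $L_t: \Phi \to L^0\ProbSpace$, $\phi \mapsto L_t[\phi]$, is continuous at zero, hence continuous, and Theorem \ref{theoCharacterizationRegularRV} then yields that each $L_t$ is a regular $\Phi'_\beta$-valued random variable. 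Since $\tilde{L}_t[\phi] = L_t[\phi]$ $\Prob$-a.e.\ for every $\phi$ and $t$, Proposition \ref{propCondiIndistingProcess} now allows us to conclude that $\tilde{L}$ is a $\Phi'_\beta$-valued version of $L$.

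There is no serious obstacle here: everything has been reduced to machinery already set up. The only mild subtlety is being careful to distinguish the cylindrical-process notion of version (the content of Theorem \ref{theoCylindrLevyHilbContHaveLevyCadlagVersHilbSpace}) from the $\Phi'_\beta$-valued notion (which is what the statement asserts), and bridging them via regularity of both $L$ and $\tilde{L}$ together with Proposition \ref{propCondiIndistingProcess}.
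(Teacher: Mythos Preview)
Your proposal is correct and follows essentially the same approach as the paper: the paper's proof simply says it follows from Theorem \ref{theoCylindrLevyHilbContHaveLevyCadlagVersHilbSpace} together with ``similar arguments to those used in the proof of Theorem \ref{theoConditCadlagVersionLevyProc}'', and those arguments are precisely the regularity check for $L$ (via continuity of $\phi\mapsto L_t[\phi]$ and Theorem \ref{theoCharacterizationRegularRV}) and the application of Proposition \ref{propCondiIndistingProcess} that you spell out.
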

\begin{prf}
The proof follows from Theorem \ref{theoCylindrLevyHilbContHaveLevyCadlagVersHilbSpace} and similar arguments to those used in the proof of Theorem  \ref{theoConditCadlagVersionLevyProc}. 
\end{prf}

\subsection{Correspondence of L\'{e}vy Processes and Infinitely Divisible Measures}\label{subsectionCLPIDM} 

We have  shown in Theorem \ref{theoLevyDefinesConvSemig} that to  every $\Phi'_{\beta}$-valued L\'{e}vy process $L=\{ L_{t} \}_{t \geq 0}$ the probability distribution $\mu_{L_{t}}$ of $L_{t}$ is infinitely divisible for each $t \geq 0$. In this section we will show that if the space $\Phi$ is barrelled and nuclear (see examples in Section \ref{subsectionNuclSpace}), to every infinitely divisible measure $\mu$ on $\Phi'_{\beta}$ there corresponds a $\Phi'_{\beta}$-valued L\'{e}vy process $L$ such that $\mu_{L_{1}}=\mu$. 

In order to prove our main result (Theorem \ref{theoInfiDivisMeasuImpliLevyProc}), we will need the following theorem that establishes the existence of a cylindrical L\'{e}vy process from a given family of cylindrical probability measures with some semigroup properties. 
We formulate our result in the more general context of Hausdorff locally convex spaces. The definitions of cylindrical probability measure and cylindrical L\'{e}vy process are exactly the same to those given in Sections \ref{subSectionCylAndStocProcess} and \ref{subSectionLPCLP}. 

\begin{theo}\label{theoCylSemiGroupImpliesCylLevyProc}
Let $\Psi$ be a Hausdorff locally convex space. Let $\{ \mu_{t} \}_{t \geq 0}$ be a family of cylindrical measures on $\Psi'$ such that for every finite collection $\psi_{1}, \psi_{2}, \dots, \psi_{n} \in \Psi$, the family $\{ \mu_{t} \circ \pi_{\psi_{1}, \psi_{2}, \dots, \psi_{n}}^{-1} \}_{t \geq 0}$ is a continuous convolution semigroup of probability measures on $\R^{n}$. Then, there exists a cylindrical process $L=\{ L_{t} \}_{t \geq 0}$ in $\Psi'$ defined on a probability space $\ProbSpace$, such that:
\begin{enumerate}
\item For every $t \geq 0$, $\psi_{1}, \psi_{2}, \dots, \psi_{n} \in \Psi$ and $\Gamma \in \mathcal{B}(\R^{n})$,  
$$\Prob \left( (L_{t}(\psi_{1}), L_{t}(\psi_{2}), \dots, L_{t}(\psi_{n})) \in \Gamma \right) = \mu_{t} \circ \pi_{\psi_{1}, \psi_{2}, \dots, \psi_{n}}^{-1}(\Gamma). $$ 
\item $L$ is a cylindrical L\'{e}vy process in $\Psi'$. 
\end{enumerate}  
\end{theo}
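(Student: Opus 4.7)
The plan is to construct $L$ via a projective-limit version of the Kolmogorov extension theorem (as cited in the introduction, \cite{RaoSPGT}, Theorem 1.3.4), using as the index set $I=\Psi\times[0,\infty)$. For each finite subset $J=\{(\psi_{1},t_{1}),\dots,(\psi_{n},t_{n})\}\subseteq I$ I will prescribe the Borel probability measure $\mathbb{Q}_{J}$ on $\R^{n}$ that is to be the joint law of $(L_{t_{1}}(\psi_{1}),\dots,L_{t_{n}}(\psi_{n}))$; then I will verify that the family $\{\mathbb{Q}_{J}\}$ is projectively consistent, apply the extension theorem to obtain the real random variables $\{L_{t}(\psi)\}$ on a common probability space $\ProbSpace$, and check that $\psi\mapsto L_{t}(\psi)$ is linear into $L^{0}\ProbSpace$ and that $L$ is a cylindrical L\'{e}vy process.

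For any finite $F=(\psi_{1},\dots,\psi_{n})$ the measures $\nu^{F}_{t}\defeq\mu_{t}\circ\pi_{\psi_{1},\dots,\psi_{n}}^{-1}$ form a continuous convolution semigroup on $\R^{n}$ by hypothesis, and hence correspond to a genuine $\R^{n}$-valued L\'{e}vy process whose joint law at times $0\le s_{1}<\dots<s_{m}$ is the probability measure $\mathbb{Q}^{F}_{s_{1},\dots,s_{m}}$ on $(\R^{n})^{m}$ obtained by declaring the increments independent with laws $\nu^{F}_{s_{1}},\nu^{F}_{s_{2}-s_{1}},\dots,\nu^{F}_{s_{m}-s_{m-1}}$. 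Given arbitrary $J$, order the $t_{j}$ and set $\mathbb{Q}_{J}$ to be the appropriate coordinate marginal of $\mathbb{Q}^{F}_{s_{1},\dots,s_{m}}$ for a sufficiently rich $F$ containing all the $\psi_{j}$.

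Consistency has to be checked in both directions. Collapsing an interior time is the Chapman--Kolmogorov identity $\nu^{F}_{s}\ast\nu^{F}_{t}=\nu^{F}_{s+t}$, which follows from the semigroup hypothesis. Consistency in the $\Psi$-direction is the statement that marginalizing $\nu^{F'}_{t}$ along the coordinates of $F'\setminus F$ returns $\nu^{F}_{t}$; this is automatic because $\mu_{t}$ is a cylindrical measure on $\Psi'$, so projections along $\pi_{\psi_{1},\dots,\psi_{n}}$ and $\pi_{\psi_{1},\dots,\psi_{n},\psi_{n+1}}$ are compatible. These two compatibilities together give the required projective compatibility of the family $\{\mathbb{Q}_{J}\}$, and the Kolmogorov extension theorem then delivers a probability space $\ProbSpace$ carrying random variables $\{L_{t}(\psi): t\ge 0,\,\psi\in\Psi\}$ with the prescribed finite-dimensional laws. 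Property (1) is then the definition of $\mathbb{Q}_{J}$.

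It remains to verify that each $L_{t}$ is a linear map $\Psi\to L^{0}\ProbSpace$ and that (2) holds. For $\alpha,\beta\in\R$ and $\psi,\psi'\in\Psi$, the joint law of $(L_{t}(\psi),L_{t}(\psi'),L_{t}(\alpha\psi+\beta\psi'))$ is $\mu_{t}\circ\pi_{\psi,\psi',\alpha\psi+\beta\psi'}^{-1}$, which is supported on the linear subspace $\{(x,y,z)\st z=\alpha x+\beta y\}$ because every $f\in\Psi'$ satisfies $f[\alpha\psi+\beta\psi']=\alpha f[\psi]+\beta f[\psi']$; consequently $L_{t}(\alpha\psi+\beta\psi')=\alpha L_{t}(\psi)+\beta L_{t}(\psi')$ $\Prob$-a.s., so $L_{t}$ is linear into $L^{0}\ProbSpace$. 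For the cylindrical L\'{e}vy property, fix $\psi_{1},\dots,\psi_{n}$: by construction the $\R^{n}$-valued process $\{(L_{t}(\psi_{1}),\dots,L_{t}(\psi_{n}))\}_{t\ge 0}$ has exactly the finite-dimensional distributions of the L\'{e}vy process associated to the semigroup $\{\nu^{F}_{t}\}_{t\ge 0}$, hence starts at $0$, has independent and stationary increments, and is stochastically continuous (weak continuity of $t\mapsto\nu^{F}_{t}$ at $0$ implies convergence in probability of the increments to $0$). The main subtleties are the two-directional projective compatibility and the a.s.\ linearity of $L_{t}$; the latter cannot be arranged pathwise by the bare Kolmogorov construction but is forced almost surely by the cylindrical structure of the data $\{\mu_{t}\}_{t\geq 0}$.
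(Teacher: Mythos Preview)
Your proposal is correct and follows essentially the same route as the paper: both build a projective system indexed by finite collections of pairs $(t,\psi)$, define the finite-dimensional laws from the $\R^{n}$-valued convolution semigroups $\{\mu_{t}\circ\pi_{F}^{-1}\}$, verify consistency in the time direction via the semigroup property and in the $\Psi$-direction via the cylindrical-measure compatibility, invoke the projective-limit Kolmogorov theorem from \cite{RaoSPGT}, and then deduce a.s.\ linearity of $L_{t}$ from the fact that $\mu_{t}\circ\pi_{\psi,\psi',\alpha\psi+\beta\psi'}^{-1}$ is supported on $\{z=\alpha x+\beta y\}$. The paper's proof is more explicit in setting up the projective system (it works with ordered time tuples and a common $F$ at all times, defining $\mu_{A}$ by the iterated-increment formula \eqref{defiFamilyMeasuresProjectiveSystem}), but the ideas and the order of the argument are the same.
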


For the proof of Theorem \ref{theoCylSemiGroupImpliesCylLevyProc} we will need to deal with projective systems of measure spaces. For the convenience of the reader we recall their definition (see \cite{RaoSPGT} p.17-19 for more details).  	

Let $\{ (\Omega_{\alpha},\Sigma_{\alpha},P_{\alpha}): \alpha \in D\}$ be a family of measure spaces, where $D$ is a directed set, and let $\{ g_{\alpha,\beta}:  \alpha < \beta, \alpha, \beta \in D\}$ be a family of mappings such that: \begin{inparaenum}[(i)] \item $g_{\alpha \beta}: \Omega_{\beta} \rightarrow \Omega_{\alpha}$, and $g_{\alpha \beta}^{-1}(\Sigma_{\alpha}) \subseteq \Sigma_{\beta}$, \item for any $\alpha < \beta < \gamma$, $g_{\alpha \gamma}= g_{\alpha \beta} \circ g_{\beta \gamma}$ and $g_{\alpha \alpha}=$identity, and \item for every $\alpha < \beta$, $P_{\alpha}=P_{\beta} \circ g_{\alpha \beta}^{-1} $.  \end{inparaenum} Then the  collection $\{ (\Omega_{\alpha},\Sigma_{\alpha},P_{\alpha},  g_{\alpha \beta} )_{\alpha < \beta}: \alpha, \beta \in D\}$ is called a projective system of measure spaces (of Hausdorff topological spaces if each $\Omega_{\alpha}$ is a Hausdorff topological space, $\Sigma_{\alpha}$ is its Baire $\sigma$-algebra, the measure $P_{\alpha}$ is regular in the measure theory sense and each $g_{\alpha \beta}$ is continuous).

\begin{proof}[Proof of Theorem \ref{theoCylSemiGroupImpliesCylLevyProc}] 
Our first objective is to define a projective system of Hausdorff topological spaces for which the probability space $\ProbSpace$ will be its projective limit (see \cite{RaoSPGT}). 

Let $\mathbb{F}$ be the set of all finite ordered collections of elements of $\Psi$. For any $F=(\psi_{1}, \psi_{2}, \dots, \psi_{n}) \in \mathbb{F}$, define $\pi_{F} \defeq \pi_{\psi_{1}, \psi_{2}, \dots, \psi_{n}}$, where recall that $\pi_{\psi_{1}, \psi_{2}, \dots, \psi_{n}}(f)=(f[\psi_{1}],f[\psi_{2}], \dots, f[\psi_{n}])$, for all $f \in \Psi'$. Then, it is clear that the map $\pi_{F}: \Psi' \mapsto \Omega^{F}$ is continuous, where $\Omega^{F} \defeq \R^{n}$. 

Now for $F \in \mathbb{F}$, define $\mu_{t}^{F} \defeq \mu_{t} \circ \pi_{F}^{-1}$, for all $t \geq 0$. Then, from our assumptions on $\{ \mu_{t} \}_{t \geq 0}$ we have that $\{ \mu_{t}^{F} \}_{t \geq 0}$ is a continuous convolution semigroup of probability measures on $\Omega^{F}$.     

Consider on $\mathbb{F}$ the partial order $ \leq_{\mathbb{F}}$ determined by the set inclusion. For any $F, G \in \mathbb{F}$ satisfying $F \leq_{\mathbb{F}} G$, denote by $g_{F,G}: \Omega^{G} \rightarrow \Omega^{F}$ the canonical projection from $\Omega^{G}$ into $\Omega^{F}$. For any $F, G, H \in \mathbb{F}$ satisfying $F \leq_{\mathbb{F}} G \leq_{\mathbb{F}} H$, it follows from the definitions above that we have:
\begin{equation} \label{projectionMapsBBF}
g_{F,H}=g_{F,G} \circ g_{G,H}, \quad g_{F,F} = \mbox{ identity on } \Omega^{F}, 
\end{equation}      
\begin{equation} \label{compatiMeasuAndProjectFG}
\mu_{t}^{F}=\mu_{t}^{G} \circ g_{F,G}^{-1}. 
\end{equation}
Now, let $\mathbb{A}=\{ \{ (t_{i},\psi_{i})\}_{i=1}^{n}: \, n \in \N, \, 0 \leq t_{1} \leq t_{2} \leq \dots \leq t_{n}, \, \psi_{1}, \psi_{2}, \dots, \psi_{n} \in \Psi \}$. Then, $(\mathbb{A}, \leq_{\mathbb{A}} )$ is a directed set when $\leq_{\mathbb{A}} $ is the partial order on $\mathbb{A}$ defined as follows:
\newline for $A= \{ (t_{i}, \psi_{i})\}_{i=1}^{n}$, $B= \{ (s_{j},\phi_{j})\}_{j=1}^{m} \in \mathbb{A}$, we say $A \leq_{\mathbb{A}} B$ if 
$$\{ t_{1}, t_{2}, \dots, t_{n} \} \subseteq \{ s_{1}, s_{2}, \dots, s_{m} \} \quad \mbox{and} \quad F=(\psi_{1}, \psi_{2}, \dots, \psi_{n}) \leq_{\mathbb{F}} G=(\phi_{1}, \phi_{2}, \dots, \phi_{m}). $$

For $A, B \in \mathbb{A}$ as above, define $\Omega^{A}= \Omega^{F}_{t_{1}} \times \Omega^{F}_{t_{2}} \times \cdots \times \Omega^{F}_{t_{n}}$ and $\Omega^{B}= \Omega^{G}_{s_{1}} \times \Omega^{G}_{s_{2}} \times \cdots \times \Omega^{G}_{s_{m}}$, where $\Omega^{F}_{t_{i}} \defeq \Omega^{F}$ for  $i=1, \dots, n$, and $\Omega^{G}_{s_{i}} \defeq \Omega^{G}$ for $j=1, \dots, m$. Clearly, $\Omega^{A}$ and $\Omega^{B}$ are Hausdorff topological vector spaces. 

Now note that if $A \leq_{\mathbb{A}} B$, then from the definition of $\leq_{\mathbb{A}} $ we have $ \{ t_{i} \}_{i=1}^{n} \subseteq \{ s_{j} \}_{j=1}^{m}$. Let $s_{j_{1}}, \dots, s_{j_{n}}$ given by $s_{j_{i}}=t_{i}$, for $i=1, \dots, n$. Define the projection $g_{A,B}: \Omega^{B} \rightarrow \Omega^{A}$ by the prescription:
\begin{align}
(w_{s_{1}}, w_{s_{2}}, \dots, w_{s_{n}}) \in \Omega^{B} 
& \mapsto  (g_{F,G}(w_{s_{j_{1}}}), g_{F,G}(w_{s_{j_{2}}}), \dots, g_{F,G}(w_{s_{j_{n}}})) \nonumber \\
&   = (g_{F,G}(w_{t_{1}}), g_{F,G}(w_{t_{2}}), \dots, g_{F,G}(w_{t_{n}})) \in \Omega^{A}. \label{projectionOmegaBtoOmegaA}  
\end{align}
If $C=\{ (r_{k}, \varphi_{k})\}_{k=1}^{p} \in \mathbb{A}$ is such that $A \leq_{\mathbb{A}} B \leq_{\mathbb{A}} C$, and if we take $H= (\varphi_{1}, \varphi_{2}, \dots, \varphi_{p} ) \in \mathbb{F}$, then it is clear from \eqref{projectionMapsBBF} that:
\begin{equation} \label{projectionMapsBBA}
g_{A,C}=g_{A,B} \circ g_{B,C}, \quad g_{A,A} = \mbox{ identity on } \Omega^{A}, 
\end{equation}
Now, for $A \in \mathbb{A}$ as above, define $\mu_{A}$ by 
\begin{equation} \label{defiFamilyMeasuresProjectiveSystem}
\mu_{A}(\Gamma_{1} \times \cdots \times \Gamma_{n})= \int_{\Gamma_{1}} \mu^{F}_{t_{1}}(dw_{1}) \int_{\Gamma_{2}} \mu^{F}_{t_{2}- t_{1}}(dw_{2} - w_{1}) \dots \int_{\Gamma_{n}} \mu^{F}_{t_{n}- t_{n-1}}(dw_{n} - w_{n-1}), 
\end{equation}
for $\Gamma_{i} \in \mathcal{B}(\Omega^{F}_{t_{i}})$, $\forall i=1, \dots, n$. Then $\mu_{A}$ can be extended to a unique measure on $\Omega^{A}$. 

Now let $\Gamma_{i} \in \mathcal{B}(\Omega^{F}_{t_{i}})$, $\forall i=1, \dots, n$. From \eqref{projectionOmegaBtoOmegaA} it follows that for $A \leq_{\mathbb{A}} B$ we have: 
\begin{equation} \label{imageSetsOnProjectionMapsAB}
g_{A,B}^{-1}(\Gamma_{1} \times \cdots \times \Gamma_{n})= \Sigma_{1} \times \cdots \times \Sigma_{m}, \quad \mbox{where} \quad  
\Sigma_{j} =
\begin{cases}
\Omega^{G}_{s_{j}}, & \mbox{if } s_{j} \notin \{ s_{j_{1}}, \dots, s_{j_{n}} \}, \\
g_{F,G}^{-1}(\Gamma_{j}), & \mbox{if } s_{j} \in \{ s_{j_{1}}, \dots, s_{j_{n}} \}.  
\end{cases}
\end{equation}
Hence from \eqref{compatiMeasuAndProjectFG}, \eqref{defiFamilyMeasuresProjectiveSystem} and \eqref{imageSetsOnProjectionMapsAB}, it follows that:
\begin{flalign}
& \mu_{B}(g_{A,B}^{-1}(\Gamma_{1} \times \cdots \times \Gamma_{n})) \nonumber \\
& = \mu_{B}(\Sigma_{1} \times \cdots \times \Sigma_{m}) \nonumber \\
& = \int_{g_{F,G}^{-1}(\Gamma_{1})} \mu^{G}_{s_{j_{1}}}(dw_{1}) \int_{g_{F,G}^{-1}(\Gamma_{2})} \mu^{G}_{s_{j_{2}}- s_{j_{1}}}(dw_{2} - w_{1}) \dots \int_{g_{F,G}^{-1}(\Gamma_{n})} \mu^{G}_{s_{j_{n}}- s_{j_{n-1}}}(dw_{n} - w_{n-1}) \nonumber \\
& = \int_{\Gamma_{1}} \mu^{G}_{t_{1}} \circ g_{F,G}^{-1}(dw_{1}) \int_{\Gamma_{2}} \mu^{G}_{t_{2}- t_{1}} \circ g_{F,G}^{-1} (dw_{2} - w_{1}) \dots \int_{\Gamma_{n}} \mu^{G}_{t_{n}- t_{n-1}} \circ g_{F,G}^{-1}(dw_{n} - w_{n-1}) \nonumber \\
& = \int_{\Gamma_{1}} \mu^{F}_{t_{1}}(dw_{1}) \int_{\Gamma_{2}} \mu^{F}_{t_{2}- t_{1}}(dw_{2} - w_{1}) \dots \int_{\Gamma_{n}} \mu^{F}_{t_{n}- t_{n-1}}(dw_{n} - w_{n-1}) \nonumber \\
& = \mu_{A}(\Gamma_{1} \times \cdots \times \Gamma_{n}). \label{compatiMeasuAndProjecAB}
\end{flalign}
where on the passage from the first to the second line we used that $\{ \mu^{G}_{t} \}_{t \geq 0}$ is a convolution semigroup of probability measures on $\Omega^{G}$. Then from a standard argument it follows that \eqref{compatiMeasuAndProjecAB} extends to
\begin{equation} \label{projectSystemCompatBBA}
\mu_{B} \circ g_{A,B}^{-1}= \mu_{A}, \quad \forall \, A, B \in \mathbb{A}, \, A \leq_{\mathbb{A}} B. 
\end{equation}
We then conclude that $\{ (\Omega^{A}, \mathcal{B}(\Omega^{A}), \mu_{A}, g_{A,B})_{A \leq_{\mathbb{A}} B}: \, A, B \in \mathbb{A} \}$ is a projective system of Hausdorff topological vector spaces. Hence from a generalization of the Kolmogorov's Extension Theorem (see \cite{RaoSPGT}, Theorem 1.3.4, p.20-1), the latter system admits a unique limit $\ProbSpace$ where $\Omega = \R^{\mathbb{A}} \cong \varprojlim (\Omega^{A}, g_{A,B})$, $\mathcal{F}= \sigma  \left( \bigcup_{A \in \mathbb{A}} g_{A}^{-1} (\mathcal{B}(\Omega^{A})) \right)$ and $\Prob = \varprojlim \mu_{A}$, where $g_{A}: \Omega \rightarrow \Omega^{A}$ is the canonical projection determined by the projections $g_{A,B}$.  

On the above, $\varprojlim (\Omega^{A}, g_{A,B})$ is the subset of $\times_{A \in \mathbb{A}} \Omega^{A}$ of all the elements $(\omega_{A})_{A \in \mathbb{A}}$ such that for $A \leq_{\mathbb{A}} B$ we have $g_{A,B}(\omega_{B})=\omega_{A}$. On the other hand, $g_{A}$ is the projection $(\omega_{A})_{A \in \mathbb{A}} \mapsto \omega_{A} \in \Omega^{A}$. Also $\Prob = \varprojlim \mu_{A}$ means that $\Prob$ is a (probability) measure on $\Omega$ that satisfies
\begin{equation} \label{compPropMuAAndProbMeasu} 
\mu_{A}= \Prob \circ g_{A}^{-1}, \quad \forall \, A \in \mathbb{A}. 
\end{equation}

Our next step is to define a cylindrical process $L=\{ L_{t} \}_{t \geq 0}$ in $\Psi'$ defined on the probability space $\ProbSpace$ that satisfies the conditions (1) and (2) on the statement of the theorem. 

First it is clear that $\Omega$ can be embedded in $\R^{\R_{+} \times \Psi} = \times_{(t,\psi)} \R^{(t,\psi)}$, where $\R^{(t,\psi)}=\R$ for each $(t,\psi) \in \R_{+} \times \Psi$. This is an easy consequence of the fact that $\mathbb{A}$ consist of finite collections of elements of $\R_{+} \times \Psi$. 

Now let $\tilde{I}: \R^{\R_{+} \times \Psi} \rightarrow \R^{\R_{+} \times \Psi}$ be the identity mapping. Define $\tilde{L}: \R_{+} \times \Psi \rightarrow L^{0} \ProbSpace$ by $\tilde{L}(t,\psi)=g_{(t,\psi)} \circ \tilde{I}$, where $g_{(t,\psi)}: \Omega \rightarrow \R$ is the coordinate projection. Then it follows from the definition of $\tilde{L}$ that:
$$ \tilde{L}(t,\psi)(\omega)=g_{(t,\psi)} (\tilde{I}(\omega))= g_{(t,\psi)} (\omega) = \omega ((t,\psi)) \in \R, \quad \forall \, \omega \in \R^{\R_{+} \times \Psi}. $$    
We clearly have that for each $(t,\psi) \in \R^{\R_{+} \times \Psi}$, $\tilde{L}(t,\psi)$ is a real-valued random variable since $\{ \omega : g_{(t,\psi)} (\tilde{I}(\omega)) < a\} \subseteq \Omega$ is a cylinder set in $\mathcal{F}$. Moreover for $A= \{ (t_{i}, \psi_{i})\}_{i=1}^{n} \in \mathbb{A}$, we have that $\tilde{L} \circ A$ given by $\tilde{L} \circ A (\omega)  \defeq ( \tilde{L}(t_{1},\psi_{1})(\omega),  \dots, \tilde{L}(t_{n},\psi_{n})(\omega))$ is a random vector because 
\begin{equation}
( \tilde{L}(t_{1},\psi_{1})(\omega),  \dots, \tilde{L}(t_{n},\psi_{n})(\omega)) = (g_{(t_{1},\psi_{1})} \circ \tilde{I}(\omega),	 \dots, g_{(t_{n},\psi_{n})} \circ \tilde{I}(\omega) ) = g_{A} \circ \tilde{I}(\omega) \label{defiRandomVectorTildeLOnA}
\end{equation}
and $\{ \omega: g_{A} \circ \tilde{I}(\omega) < a \} \subseteq \Omega$ is also a cylinder set in $\mathcal{F}$.  Then, from \eqref{compPropMuAAndProbMeasu} and \eqref{defiRandomVectorTildeLOnA} we have: 
\begin{equation} \label{compatMuAAndTildeL}
\mu_{A}= \Prob \circ g_{A}^{-1} = \Prob \circ (\tilde{L} \circ A)^{-1}, \quad \forall \, A \in \mathcal{A}. 
\end{equation}
Therefore $\mu_{A}$ is the distribution of the random vector $\tilde{L} \circ A$. Moreover for any $t \geq 0$ and $\psi_{1}, \psi_{2}, \dots, \psi_{n}$, it follows from our definition of $\mathbb{A}$ that $A=\{ (t,\psi_{i}) \}_{i=1}^{n} \in \mathbb{A}$, and from \eqref{defiFamilyMeasuresProjectiveSystem}, \eqref{defiRandomVectorTildeLOnA} and \eqref{compatMuAAndTildeL} we have for this $A$ that for every $\Gamma \in \mathcal{B}(\R^{n})$,
\begin{equation} \label{compMutAndTildeLOnCylinders}
\Prob \left( ( \tilde{L}(t,\psi_{1}), \tilde{L}(t,\psi_{2}), \dots, \tilde{L}(t,\psi_{n})) \in \Gamma \right) = \mu_{A} (\Gamma)= \mu_{t} \circ \pi^{-1}_{\psi_{1}, \psi_{2}, \dots, \psi_{n}} (\Gamma). 
\end{equation}  
Now fix $t \geq 0$. We will show the linearity of the map $\tilde{L}(t,\cdot): \Psi \rightarrow L^{0} \ProbSpace$.  

Let $\psi_{1}, \psi_{2} \in \Psi$. Consider  $F=(  \psi_{1},  \psi_{2},  \psi_{1}+ \psi_{2})$.
Then, recall that $\Omega^{F}=\R^{3}$ and in this case $\pi_{F}: \Psi' \rightarrow \R^{3}$ is given by $f \mapsto  (f[\psi_{1}], f[\psi_{2}],f[ \psi_{1}+  \psi_{2}])$.
If $\sigma: \R^{3} \rightarrow \R$ is given by $(a,b,c) \mapsto a+b-c$, then it is clear that $\sigma$ is continuous and that $\sigma \circ \pi_{F}=0$.
It then follows that for $\Gamma \in \mathcal{B}(\R)$, $\mu_{t} \circ \pi_{F}^{-1}( \sigma^{-1}(\Gamma))=0$ if $0 \notin \Gamma$ and $\mu_{t} \circ \pi_{F}^{-1}( \sigma^{-1}(\Gamma))=1$ if $0 \in \Gamma$. Hence, 
$\mu_{t} \circ \pi_{F}^{-1}$ is supported by the plane $\sigma^{-1}(\{ 0 \})=\{ (a,b,c): a+b-c=0\}$ of $\R^{3}$. But then,  we have from \eqref{compMutAndTildeLOnCylinders} that
\begin{equation*}
\Prob \left( (\tilde{L}(t,\psi_{1}), \tilde{L}(t,\psi_{2}),  \tilde{L}(t,\psi_{1}+\psi_{2}) ) \in \sigma^{-1}(\{ 0 \}) \right)
= \mu_{t} \circ \pi_{F}^{-1}( \sigma^{-1}(\{ 0\}))=1.
\end{equation*}
Therefore 
\begin{equation} \label{linearSumTildeL}
\tilde{L}(t,\psi_{1})+ \tilde{L}(t,\psi_{2})=  \tilde{L}(t,\psi_{1}+\psi_{2}) \quad \Prob-\mbox{a.e.}
\end{equation}
On the other hand, for any $\alpha \in \R$, $\psi \in \Psi$, if we consider $F=( \psi,\alpha \psi )$, then $\pi_{F}: \Psi' \rightarrow \R^{2}$ is given by $f \mapsto (f[\psi],f[\alpha \psi])$, and if we consider $\sigma: \R^{2} \rightarrow \R$ given by $(p,q) \mapsto \alpha p-q$, by using similar arguments to those used above we can show that 
\begin{equation} \label{linearScalarTildeL}
\alpha \tilde{L}(t,\psi)= \tilde{L}(t,\alpha \psi) \quad \Prob-\mbox{a.e.}
\end{equation}
Hence \eqref{linearSumTildeL} and \eqref{linearScalarTildeL} show that for a fixed $t \geq 0$ the map $\tilde{L}(t,\cdot): \Psi \rightarrow L^{0} \ProbSpace$ is linear.

Now define $L=\{ L_{t} \}_{t \geq 0}$, $L_{t}: \Psi \rightarrow L^{0} \ProbSpace$ by $L_{t}(\psi)(\omega)=\tilde{L}(t,\psi)(\omega)$, for all $t \geq 0$, $\psi \in \Psi$ and $\omega \in \Omega$. The linearity of the map $\tilde{L}(t,\cdot): \Psi \rightarrow L^{0} \ProbSpace$ for every $t \geq 0$ shows that $L=\{ L_{t} \}_{t \geq 0}$ is a cylindrical stochastic process in $\Psi'$. Moreover it follows from \eqref{compMutAndTildeLOnCylinders} that $\forall t \geq 0$, $\psi_{1}, \psi_{2}, \dots, \psi_{n} \in \Psi$, $\Gamma \in \mathcal{B}(\R^{n})$ we have
\begin{equation} \label{compMutAndCylProcLOnCylinders}
\Prob \left( ( L_{t}(\psi_{1}), L_{t}(\psi_{2}), \dots, L_{t}(\psi_{n})) \in \Gamma \right) = \mu_{t} \circ \pi^{-1}_{\psi_{1}, \psi_{2}, \dots, \psi_{n}} (\Gamma). 
\end{equation} 
Now we will show that $L=\{ L_{t} \}_{t \geq 0}$ is a cylindrical L\'{e}vy process in $\Psi$. Fix $\psi_{1}, \psi_{2}, \dots, \psi_{n} \in \Psi$. We have to show that  $\{ ( L_{t}(\psi_{1}), L_{t}(\psi_{2}), \dots, L_{t}(\psi_{n})) \}_{t \geq 0}$ is a $\R^{n}$-valued L\'{e}vy process.

First it follows from \eqref{defiFamilyMeasuresProjectiveSystem}, \eqref{compMutAndTildeLOnCylinders} and \eqref{compMutAndCylProcLOnCylinders} that for any $t_{1} < t_{2} < \dots < t_{n}$ and any bounded measurable function $f$ on $\R^{n^{2}}$, we have 
\begin{multline} \label{expOfCylLevyForBoundFunctF}
 \Exp \left[ f \left( ( L_{t_{1}}(\psi_{1}), L_{t_{1}}(\psi_{2}), \dots, L_{t_{1}}(\psi_{n})),  \dots, ( L_{t_{n}}(\psi_{1}), L_{t_{n}}(\psi_{2}), \dots, L_{t_{n}}(\psi_{n})) \right) \right] \\
 = \int \cdots \int f (w_{1}, w_{1}+w_{2}, \dots, w_{1}+w_{2}+ \dots +w_{n}) \\
 \times \mu^{F}_{t_{1}}(dw_{1}) \mu^{F}_{t_{2}- t_{1}}(dw_{2} - w_{1}) \dots  \mu^{F}_{t_{n}- t_{n-1}}(dw_{n} - w_{n-1}), 
\end{multline}
where $F=(\psi_{1}, \psi_{2}, \dots, \psi_{n})  \in \mathbb{F}$. 
Then by following similar arguments to those used on the proof of Theorem 2.7.10 in \cite{Sato} p.36, the independent and stationary increments of the $\R^{n}$-valued process $\{ ( L_{t}(\psi_{1}), L_{t}(\psi_{2}), \dots, L_{t}(\psi_{n})) \}_{t \geq 0}$ can be deduced by fixing $z_{1}, \dots, z_{n} \in \R^{n}$ and setting
$$f(w_{1}, w_{2}, \dots, w_{n}) = \exp \left( i \sum_{j=1}^{n} \inner{z_{j}}{w_{j}-w_{j-1}} \right), \quad \forall \,  w_{1}, w_{2}, \dots, w_{n} \in \R^{n}, \mbox{ with } w_{0}=0. $$

Finally the fact that the process $\{ ( L_{t}(\psi_{1}), L_{t}(\psi_{2}), \dots, L_{t}(\psi_{n})) \}_{t \geq 0}$ is stochastically continuous is a consequence of  \eqref{compMutAndCylProcLOnCylinders} and our assumption that $\{ \mu_{t} \circ \pi_{\psi_{1}, \psi_{2}, \dots, \psi_{n}}^{-1} \}_{t \geq 0}$ is a continuous convolution semigroup of probability measures on $\R^{n}$ (see \cite{ApplebaumLPSC}, Proposition 1.4.1). Thus, we have shown that $\{ ( L_{t}(\psi_{1}), L_{t}(\psi_{2}), \dots, L_{t}(\psi_{n})) \}_{t \geq 0}$ is a $\R^{n}$-valued L\'{e}vy process for any $\psi_{1}, \psi_{2}, \dots, \psi_{n} \in \Psi$ and consequently $L=\{ L_{t} \}_{t \geq 0}$ is a cylindrical L\'{e}vy process in $\Psi$. 
\end{proof}

We are ready for the main result of this section. 

\begin{theo} \label{theoInfiDivisMeasuImpliLevyProc}  
Let $\Phi$ be a barrelled nuclear space. If $\mu$ is an infinitely divisible measure on $\Phi'_{\beta}$, there exists a $\Phi'_{\beta}$-valued, regular, c\`{a}dl\`{a}g L\'{e}vy process $L=\left\{ L_{t} \right\}_{t\geq 0}$ such that $\mu_{L_{1}}=\mu$. 
\end{theo}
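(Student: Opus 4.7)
The plan is to chain together the results already established in the paper: convert the given infinitely divisible measure $\mu$ into a continuous convolution semigroup, lift that semigroup to a cylindrical Lévy process via the projective-limit construction of Theorem \ref{theoCylSemiGroupImpliesCylLevyProc}, verify equicontinuity to satisfy the hypotheses of the regularization theorem for cylindrical Lévy processes, and finally identify the marginal distributions of the resulting càdlàg version with the original semigroup.

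First, since $\Phi$ is barrelled nuclear, $\Phi'_\beta$ is quasi-complete (Schaefer IV.6.1, as cited in the proof of Theorem \ref{theoLevyDefinesConvSemig}). Thus Theorem \ref{theoCorrespInfDivisAndConvSemigroups} applies and yields a unique continuous convolution semigroup $\{\mu_t\}_{t\geq 0}\subseteq \goth{M}^1_R(\Phi'_\beta)$ with $\mu_1=\mu$. For any finite tuple $\phi_1,\dots,\phi_n\in\Phi$ the pushforwards $\{\mu_t\circ\pi_{\phi_1,\dots,\phi_n}^{-1}\}_{t\geq 0}$ form a continuous convolution semigroup on $\R^n$, because $\pi_{\phi_1,\dots,\phi_n}:\Phi'_\beta\to\R^n$ is continuous and linear and weak convergence is preserved under pushforward by continuous maps.

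Next, I would invoke Theorem \ref{theoCylSemiGroupImpliesCylLevyProc}: this produces a probability space $\ProbSpace$ and a cylindrical Lévy process $L=\{L_t\}_{t\geq 0}$ in $\Phi'$ whose finite-dimensional distributions at each time $t$ are given by $\Prob\bigl((L_t(\phi_1),\dots,L_t(\phi_n))\in\Gamma\bigr)=\mu_t\circ\pi_{\phi_1,\dots,\phi_n}^{-1}(\Gamma)$. In particular, for every $\phi\in\Phi$ the characteristic function of the real random variable $L_t(\phi)$ coincides with $\widehat{\mu}_t(\phi)$.

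The key verification is equicontinuity of the family $\{L_t:t\in[0,T]\}$ from $\Phi$ into $L^0 \ProbSpace$ for every $T>0$. By Lemma \ref{lemmLocalUnifTightnessConvSemigroup} (applicable because $\Phi'_\beta$ is quasi-complete), the family $\{\mu_t:t\in[0,T]\}$ is uniformly tight. Since $\mu_t$ is a Radon extension of the cylindrical distribution of $L_t$, Theorem \ref{theoLocalEquiconCharFuncInBarrelledNuclearSpace} (using the barrelled-nuclear hypothesis) yields equicontinuity of $\{L_t:t\in[0,T]\}$. This is the step I expect to be the main content of the argument — the bridge between the abstract cylindrical Lévy process and a genuine regular process relies crucially on both barrelledness (for equicontinuity of $L_t$) and nuclearity (so that equicontinuity of $L_t$ forces uniform tightness).

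With equicontinuity in hand, Theorem \ref{theoCylindrLevyProcessHaveLevyCadlagVersion} applies to $L$ and produces a $\Phi'_\beta$-valued, regular, càdlàg Lévy process $\tilde{L}=\{\tilde{L}_t\}_{t\geq 0}$ that is a version of $L$, so $\tilde{L}_t[\phi]=L_t(\phi)$ $\Prob$-a.s.\ for every $\phi\in\Phi$ and $t\geq 0$. Consequently the characteristic function of the Radon distribution $\mu_{\tilde{L}_t}$ equals $\widehat{\mu}_t$ on all of $\Phi$. Since the characteristic function of a Radon measure on $\Phi'_\beta$ determines the measure uniquely (Minlos-type uniqueness, as cited in Section \ref{subSectionCylAndStocProcess}), we conclude $\mu_{\tilde{L}_t}=\mu_t$ for every $t\geq 0$; in particular $\mu_{\tilde{L}_1}=\mu_1=\mu$, completing the proof.
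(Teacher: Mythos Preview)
Your proof is correct and follows essentially the same route as the paper: quasi-completeness of $\Phi'_\beta$ via barrelledness, Theorem \ref{theoCorrespInfDivisAndConvSemigroups} to embed $\mu$ in a continuous convolution semigroup, Theorem \ref{theoCylSemiGroupImpliesCylLevyProc} to build the cylindrical L\'{e}vy process, Lemma \ref{lemmLocalUnifTightnessConvSemigroup} plus Theorem \ref{theoLocalEquiconCharFuncInBarrelledNuclearSpace} for equicontinuity, and Theorem \ref{theoCylindrLevyProcessHaveLevyCadlagVersion} for the regular c\`{a}dl\`{a}g version. The only cosmetic difference is in the final identification: the paper checks that $\mu_{\tilde{L}_t}$ and $\mu_t$ agree on cylinder sets and invokes that two Radon measures agreeing on cylinders coincide, whereas you phrase this via equality of characteristic functions; these are equivalent.
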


\begin{prf}   
First note that as $\Phi$ is barrelled then $\Phi'_{\beta}$ is quasi-complete (see \cite{Schaefer}, Theorem IV.6.1, p.148). Therefore it follows from Theorem \ref{theoCorrespInfDivisAndConvSemigroups} that there exists a unique continuous convolution semigroup $\{ \mu_{t} \}_{t \geq 0}$ in $\goth{M}_{R}^{1}(\Phi'_{\beta})$ such that $\mu_{1}=\mu$.  

Now it is clear that the cylindrical measures determined by the family $\{ \mu_{t} \}_{t \geq 0}$ satisfy that for every finite collection $\phi_{1}, \phi_{2}, \dots, \phi_{n} \in \Phi$, the family $\{ \mu_{t} \circ \pi^{-1}_{\phi_{1}, \phi_{2}, \dots, \phi_{n}} \}_{t \geq 0}$ is a continuous convolution semigroup of probability measures on $\R^{n}$. Then Theorem \ref{theoCylSemiGroupImpliesCylLevyProc} shows the existence of a cylindrical L\'{e}vy process $L=\{ L_{t} \}_{t \geq 0}$ in $\Phi'$ defined on a probability space $\ProbSpace$, such that for every $t \geq 0$, $\phi_{1}, \phi_{2}, \dots, \phi_{n} \in \Phi$ and $\Gamma \in \mathcal{B}(\R^{n})$,  
\begin{equation} \label{mutIsExtensionOfCylLevy}
\Prob \left( (L_{t}(\phi_{1}), L_{t}(\phi_{2}), \dots, L_{t}(\phi_{n})) \in \Gamma \right) = \mu_{t} \circ \pi_{\phi_{1}, \phi_{2}, \dots, \phi_{n}}^{-1}(\Gamma).
\end{equation}
Now given $T>0$, since $\Phi$ is barrelled, it follows from Lemma \ref{lemmLocalUnifTightnessConvSemigroup} that $\{ \mu_{t} : t \in [0,T]\}$ is uniformly tight. Then Theorem III.2.7 in \cite{DaleckyFomin} implies that the characteristic functions $\{ \widehat{\mu_{t}}: t \in [0,T]\}$ are equicontinuous.  
Thus as $L=\{ L_{t} \}_{t \geq 0}$ is a cylindrical L\'{e}vy process in $\Phi'$, it follows from 
Theorem \ref{theoCylindrLevyProcessHaveLevyCadlagVersion} that there exists a $\Phi'_{\beta}$-valued, regular, c\`{a}dl\`{a}g L\'{e}vy process $\tilde{L}=\{ \tilde{L}_{t} \}_{t \geq 0}$ that is a version of $L=\{ L_{t} \}_{t \geq 0}$. 
Moreover it follows from \eqref{mutIsExtensionOfCylLevy} that for every $t \geq 0$, $\phi_{1}, \phi_{2}, \dots, \phi_{n} \in \Phi$ and $\Gamma \in \mathcal{B}(\R^{n})$, 
\begin{align*}
\mu_{\tilde{L}_{t}} \circ \pi_{\phi_{1}, \phi_{2}, \dots, \phi_{n}}^{-1}(\Gamma)
& =\Prob \left( \tilde{L}_{t} \in \pi_{\phi_{1}, \phi_{2}, \dots, \phi_{n}}^{-1}(\Gamma) \right)\\
& =\Prob \left( (L_{t}(\phi_{1}), L_{t}(\phi_{2}), \dots, L_{t}(\phi_{n})) \in \Gamma \right)  = \mu_{t} \circ \pi_{\phi_{1}, \phi_{2}, \dots, \phi_{n}}^{-1}(\Gamma).
\end{align*}
Hence for every $t \geq 0$, the measures $\mu_{\tilde{L}_{t}}$ and $\mu_{t}$ coincide on all the cylindrical sets, but as both measures are Radon measures this is enough to conclude that that $\mu_{\tilde{L}_{t}}=\mu_{t}$. Now, as $\mu_{1}=\mu$, we then have that $\mu_{\tilde{L}_{1}}=\mu$. 
\end{prf}

%%%%%%%%%%%%%%%%%%%%%%%%% Wiener and Compound Poisson Processes %%%%%%%%%%%%%%%%%%%%%
\subsection{Wiener Processes in the Dual of a Nuclear Space}\label{subsectionWCPP}

In this section we quickly review some properties of Wiener processes in $\Phi'_{\beta}$ proved by K. It\^{o} \cite{Ito} and that we will need later for our proof of the L\'{e}vy-It\^{o} decomposition. 

\begin{defi} \label{wienerProcess} A $\Phi'_{\beta}$-valued continuous L\'{e}vy process $W=\left\{ W_{t} \right\}_{t\geq 0}$ is called a $\Phi'_{\beta}$-valued \emph{Wiener process}.
A $\Phi'_{\beta}$-valued process $G=\left\{ G_{t} \right\}_{t\geq 0}$ is called \emph{Gaussian} if for any $n \in \N$ and any $\phi_{1}, \dots, \phi_{n} \in \Phi$,  $\left\{ (G_{t}[\phi_{1}], \dots,G_{t}[\phi_{n}]) \st t \geq 0 \right\}$ is a Gaussian process in $\R^{n}$. 
\end{defi}

\begin{theo} [\cite{Ito}, Theorem 2.7.1]\label{propertiesWienerProcess}
Let $W=\left\{ W_{t} \right\}_{t\geq 0}$ be a $\Phi'_{\beta}$-valued Wiener process. Then, $W$ is Gaussian and hence square integrable. Moreover there exists $\goth{m} \in \Phi'$ and a continuous Hilbertian semi-norm $\mathcal{Q}$ on $\Phi$, called respectively the mean and the covariance functional of $W$, such that 
\begin{equation}
\Exp \left( W_{t} [\phi] \right) = t \goth{m} [\phi], \quad  \forall \, \phi \in \Phi, \, t \geq 0.  \label{meanWienerProcess}
\end{equation}   
\begin{equation}
\Exp \left( \left(W_{t}-t \goth{m} \right)[\phi] \left(W_{s}-s \goth{m} \right)[\varphi] \right) = ( t \wedge s ) \mathcal{Q} (\phi, \varphi), \quad \forall \, \phi, \varphi \in \Phi, \, s, t \geq 0,  \label{covarianceFunctWienerProcess}
\end{equation}
where in \eqref{covarianceFunctWienerProcess}, $\mathcal{Q}(\cdot,\cdot)$ corresponds to the continuous, symmetric, non-negative bilinear form on $\Phi \times \Phi$ associated to $\mathcal{Q}$. Furthermore the characteristic function of $W$ is given by  
\begin{equation} \label{charactFunctionWienerProcess}
\Exp \left( e^{i W_{t}[\phi]} \right)=\exp\left(i t \goth{m}[\phi] - \frac{t}{2} \mathcal{Q}(\phi)^{2} \right), \quad \mbox{ for each } t \geq 0, \,  \phi \in \Phi.
\end{equation}
\end{theo}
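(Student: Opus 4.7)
The plan is to exploit the continuity of paths together with the L\'{e}vy structure to force Gaussianity, and then extract $\goth{m}$ and $\mathcal{Q}$ as the mean and covariance of $W_{1}$, upgrading their continuity on $\Phi$ via regularity of $W_{1}$ and Gaussian moment bounds.

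First I would verify Gaussianity. For any $\phi_{1}, \dots, \phi_{n} \in \Phi$, the $\R^{n}$-valued process $\{(W_{t}[\phi_{1}], \dots, W_{t}[\phi_{n}])\}_{t \geq 0}$ has independent and stationary increments inherited from $W$ (using Proposition \ref{independenceStochProcessDualSpace}) and continuous sample paths inherited from the continuity of $W$. The classical fact that a continuous $\R^{n}$-valued L\'{e}vy process is a Brownian motion with drift then shows this vector process is Gaussian, giving the Gaussian property of $W$ in the sense of Definition \ref{wienerProcess} and finite moments of all orders for each $W_{t}[\phi]$.

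Second, I would build the mean. Set $\goth{m}_{t}(\phi) \defeq \Exp(W_{t}[\phi])$; this is linear in $\phi$ (cylindrical linearity of $W_{t}$ combined with integrability) and additive in $t$ (independent stationary increments give $\goth{m}_{t+s} = \goth{m}_{t} + \goth{m}_{s}$). Combined with measurability/continuity in $t$ (which follows from the weak continuity of $t \mapsto \mu_{t}$ together with Gaussian moment control), one obtains $\goth{m}_{t} = t\goth{m}$ with $\goth{m} \defeq \goth{m}_{1}$. To show $\goth{m} \in \Phi'_{\beta}$, note that $W_{1}$ is regular (its distribution $\mu_{1}$ is Radon), so Theorem \ref{theoCharacterizationRegularRV} gives continuity of $\phi \mapsto W_{1}[\phi]$ from $\Phi$ into $L^{0}\ProbSpace$. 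Since each $W_{1}[\phi]$ is Gaussian, the Gaussian hypercontractive estimate upgrades this to continuity into $L^{2}\ProbSpace$; passing to expectations yields continuity of $\goth{m}$.

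Third, I would define $\mathcal{Q}(\phi,\varphi) \defeq \Exp\bigl((W_{1}-\goth{m})[\phi]\,(W_{1}-\goth{m})[\varphi]\bigr)$. Symmetry and non-negativity are immediate, bilinearity follows from the cylindrical linearity, and continuity as a bilinear form follows from the $L^{2}$-continuity of $\phi \mapsto (W_{1}-\goth{m})[\phi]$ established in the previous step. Hence $\mathcal{Q}(\phi) \defeq \mathcal{Q}(\phi,\phi)^{1/2}$ is a continuous Hilbertian semi-norm on $\Phi$. For the general covariance identity, I would assume $s \leq t$ and decompose $W_{t} - t\goth{m} = (W_{s} - s\goth{m}) + \bigl((W_{t}-W_{s}) - (t-s)\goth{m}\bigr)$, where the second summand has zero mean (by stationarity) and is independent of the first (by independent increments), so cross terms vanish and the expression collapses to $\Exp\bigl((W_{s}-s\goth{m})[\phi]\,(W_{s}-s\goth{m})[\varphi]\bigr)$. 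Writing $c_{t}(\phi,\varphi)$ for this quantity, the same decomposition yields additivity $c_{t+u} = c_{t} + c_{u}$, and continuity in $t$ forces $c_{t} = t \mathcal{Q}(\phi,\varphi)$, proving \eqref{covarianceFunctWienerProcess}. The characteristic function formula \eqref{charactFunctionWienerProcess} then reduces to the univariate Gaussian Fourier transform applied to $W_{t}[\phi] \sim N(t\goth{m}[\phi], t\mathcal{Q}(\phi)^{2})$.

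The main obstacle is the continuity of $\goth{m}$ and $\mathcal{Q}$ on $\Phi$: the natural continuity one gets from regularity of $W_{1}$ is only into $L^{0}$, so one needs the upgrade to $L^{2}$ via Gaussian equivalence of norms, which depends on having Gaussianity of all linear marginals — hence on Step 1. Once this $L^{2}$-continuity is in hand, every remaining step is a formal algebraic manipulation with expectations using the independent and stationary increment properties.
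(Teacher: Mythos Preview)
The paper does not supply its own proof of this theorem: it is stated with attribution to It\^{o}'s monograph (Theorem 2.7.1 there) and no argument is given. Your proposal is the standard proof of this fact and matches in spirit what one finds in It\^{o}'s treatment: reduce Gaussianity to the one-dimensional projections via the classical result that a continuous real (or $\R^{n}$-valued) L\'{e}vy process is Brownian motion with drift, then read off $\goth{m}$ and $\mathcal{Q}$ from the mean and covariance of $W_{1}$, using additivity in $t$ from the independent stationary increments.

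One point to tighten: you invoke Theorem \ref{theoCharacterizationRegularRV} to pass from ``$\mu_{W_{1}}$ is Radon'' to ``$\phi \mapsto W_{1}[\phi]$ is continuous into $L^{0}$'', but in that theorem the implication $(3)\Rightarrow(2)$ is only asserted when $\Phi$ is barrelled, which is not assumed in the statement of Theorem \ref{propertiesWienerProcess}. You should either add that hypothesis, or argue continuity of $\widehat{\mu_{W_{1}}}$ directly from tightness of the Radon measure $\mu_{W_{1}}$ (a compact set in $\Phi'_{\beta}$ carrying most of the mass is equicontinuous on a suitable neighbourhood of zero, which gives equicontinuity of $\phi\mapsto e^{iW_{1}[\phi]}$ and hence continuity of the characteristic function; from there the Gaussian form of $\widehat{\mu_{W_{1}}}$ yields continuity of $\goth{m}$ and $\mathcal{Q}$). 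Apart from this, your argument is complete.
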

% \cite{Ito:1980}.  

\begin{theo}[\cite{Ito}, Theorem 2.7.2] \label{existenceWienerProcess} 
Given $\goth{m} \in \Phi'$ and a continuous Hilbertian semi-norm $\mathcal{Q}$ on $\Phi$, there exists a $\Phi'_{\beta}$-valued Wiener process $W=\left\{ W_{t} \right\}_{t\geq 0}$ such that $\goth{m}$ and $\mathcal{Q}$ are the mean and covariance functional of $W$. Moreover, such a process is unique in distribution.  
\end{theo}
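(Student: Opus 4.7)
The plan is to build the Wiener process by first constructing its cylindrical distributions, invoking the projective-system theorem to produce a cylindrical Lévy process, and finally lifting to a continuous $\Phi'_{\beta}$-valued process via the regularization machinery established in Section \ref{subSectionLPCLP}.

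First I would define, for each $t\geq 0$, a cylindrical probability measure $\mu_t$ on $\Phi'$ via the prescription that its characteristic function is
\[
\widehat{\mu}_t(\phi) \;=\; \exp\!\left( i t\,\goth{m}[\phi] - \tfrac{t}{2}\mathcal{Q}(\phi)^{2} \right), \qquad \phi \in \Phi.
\]
For any finite $\phi_{1},\dots,\phi_{n}\in\Phi$, the push-forward $\mu_{t}\circ\pi_{\phi_{1},\dots,\phi_{n}}^{-1}$ is the $n$-dimensional Gaussian with mean $t(\goth{m}[\phi_{1}],\dots,\goth{m}[\phi_{n}])$ and covariance matrix $t\bigl(\mathcal{Q}(\phi_{i},\phi_{j})\bigr)_{i,j}$; hence these push-forwards form a continuous convolution semigroup of Gaussian laws on $\R^{n}$. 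Theorem \ref{theoCylSemiGroupImpliesCylLevyProc} then produces a cylindrical Lévy process $L=\{L_{t}\}_{t\geq 0}$ in $\Phi'$ whose cylindrical distribution at each time $t$ equals $\mu_{t}$.

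Next I would verify that the family $\{L_{t}:t\in[0,T]\}$ is equicontinuous from $\Phi$ into $L^{0}(\Omega,\mathscr{F},\Prob)$. Equivalently (by the remark preceding Theorem \ref{theoRegularizationTheoremCadlagContinuousVersion}) it suffices to show the family $\{\widehat{\mu}_{t}\}_{t\in[0,T]}$ is equicontinuous at $0\in\Phi$. Since both $\phi\mapsto\goth{m}[\phi]$ and $\phi\mapsto\mathcal{Q}(\phi)$ are continuous, the bound
\[
\bigl|\widehat{\mu}_{t}(\phi)-1\bigr| \;\leq\; T\,|\goth{m}[\phi]| + \tfrac{T}{2}\mathcal{Q}(\phi)^{2}, \qquad t\in[0,T],
\]
gives the required equicontinuity. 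Moreover, for each fixed $\phi$, the real-valued Lévy process $L(\phi)$ is a Brownian motion with drift $\goth{m}[\phi]$ and variance parameter $\mathcal{Q}(\phi)^{2}$, and therefore possesses a \emph{continuous} modification. The two hypotheses of the regularization Theorem \ref{theoRegularizationTheoremCadlagContinuousVersion} are thus satisfied in the continuous version, yielding a regular $\Phi'_{\beta}$-valued continuous process $W=\{W_{t}\}_{t\geq 0}$ such that $W[\phi]$ is a version of $L(\phi)$ for every $\phi\in\Phi$. By construction $W$ inherits stationary and independent increments (arguing as in Theorem \ref{theoCylindrLevyProcessHaveLevyCadlagVersion}, using Propositions \ref{propCondiIndistingProcess} and \ref{independenceStochProcessDualSpace}) and weak continuity of its laws, so $W$ is a $\Phi'_{\beta}$-valued Wiener process. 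Formula \eqref{charactFunctionWienerProcess} holds by construction, and differentiating the characteristic function (equivalently, using the Gaussian moment structure of each $W_{t}[\phi]$) identifies $\goth{m}$ and $\mathcal{Q}$ as the mean and covariance functional.

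Uniqueness in law is immediate: if $W'$ is any other $\Phi'_{\beta}$-valued Wiener process with the same mean $\goth{m}$ and covariance $\mathcal{Q}$, then Theorem \ref{propertiesWienerProcess} shows that $\Exp e^{iW'_{t}[\phi]}=\Exp e^{iW_{t}[\phi]}$ for every $\phi$ and $t$, so by independence of increments all finite-dimensional characteristic functions of $W'$ agree with those of $W$. Since the laws of $W_{t_{1}},\dots,W_{t_{n}}$ are Radon probability measures on $(\Phi'_{\beta})^{n}$, they are determined by the cylindrical characteristic function (Minlos-type uniqueness), hence $W$ and $W'$ have identical finite-dimensional distributions. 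The main obstacle I anticipate is verifying that the continuous version delivered by the regularization theorem truly retains the L\'evy-process properties globally (independent/stationary increments and weak continuity of $\mu_{W_{t}}$); this is handled exactly as in the proof of Theorem \ref{theoCylindrLevyProcessHaveLevyCadlagVersion}, passing from $\phi$-wise versions to the process-level statement through the regularity of $W$.
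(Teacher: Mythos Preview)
Your proof is correct. Note, however, that the paper does not actually prove this theorem: it is quoted from It\^{o} \cite{Ito}, Theorem~2.7.2, and stated without argument. What you have done is supply a self-contained proof using the machinery of this paper itself---the projective-system construction of Theorem~\ref{theoCylSemiGroupImpliesCylLevyProc} followed by the regularization Theorem~\ref{theoRegularizationTheoremCadlagContinuousVersion}---rather than appealing to It\^{o}'s original construction (which proceeds via a direct Hilbert-space realization of the Gaussian measure and a Kolmogorov--Chentsov continuity argument in a suitable $\Phi'_{q}$). Your route has the virtue of making the result an internal corollary of the paper's general Lévy framework, and it illustrates that Theorem~\ref{theoCylSemiGroupImpliesCylLevyProc} together with regularization already contains the existence of Wiener processes as a special case. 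One minor remark: your equicontinuity bound is fine, but you could alternatively invoke Theorem~\ref{theoCylindrLevyHilbContHaveLevyCadlagVersHilbSpace} directly, since $\Exp|L_{t}(\phi)|^{2}=t^{2}\goth{m}[\phi]^{2}+t\,\mathcal{Q}(\phi)^{2}$ is dominated by a continuous Hilbertian semi-norm; this would immediately yield a $\Phi'_{q}$-valued continuous version and slightly streamline the argument.
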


%%%%%%%%%%%%%%%%%%%%%%%% The Paths of a Levy Process: The Levy-Ito Decomposition %%%%%%%%%%%%%%%%%%%%%%%%%%%%%
 
\section{The L\'{e}vy-It\^{o} Decomposition.}\label{sectionPLPLID}

\begin{assu}\label{generalAssumptionsLevyProcess}  Let $\ProbSpace$ be a complete probability space equipped with a filtration $\left\{ \mathcal{F}_{t} \right\}_{t \geq 0}$, that satisfies the usual conditions, i.e. it is right continuous and $\mathcal{F}_{0}$ contains all sets of $\Prob$-measure zero. 

We will consider a $\Phi'_{\beta}$-valued L\'{e}vy process $L=\left\{ L_{t} \right\}_{t\geq 0}$ and we assume that: \hfill
\begin{enumerate}
\item $L_{t}-L_{s}$ is independent of $\mathcal{F}_{s}$ for all $0\leq s <t$.
\item $\forall T > 0$, the family $\{ L_{t}: t \in [0,T] \}$ is equicontinuous as operators from $\Phi$ into $L^{0} \ProbSpace$.
\end{enumerate}
Then, it is a consequence of Theorem \ref{theoConditCadlagVersionLevyProc} that there  exists a countably Hilbertian topology $\vartheta_{L}$ on $\Phi$ such that $L$ is a $(\widetilde{\Phi_{\vartheta_{L}}})'_{\beta}$-valued c\`{a}dl\`{a}g process. 
We denote by $\Omega_{L} \subseteq \Omega$ a set with $\Prob (\Omega_{L})=1$ and such that for each $\omega \in \Omega_{L}$ the map $t \mapsto L_{t}(\omega)$ is c\`{a}dl\`{a}g in  $(\widetilde{\Phi_{\vartheta_{L}}})'_{\beta}$.
\end{assu}

It is important to remark that if $\Phi$ is a barrelled nuclear space, then Assumption \ref{generalAssumptionsLevyProcess}\emph{(2)} is satisfied by every $\Phi'_{\beta}$-valued L\'{e}vy process (see the proof of Corollary \ref{coroExistCadlagVersionLevyInBarrelledNuclearSpace}).

\subsection{Poisson Random Measures and Poisson Integrals.}\label{subsectionPRMPI}

In this section we study basic properties of the Poisson integrals defined by a stationary Poisson point process and its associated Poisson random measure on the dual of a nuclear space (see \cite{IkedaWatanabe}, Sections 1.8 and 1.9, for the basic definitions). For our proof of the L\'{e}vy-It\^{o} decomposition we will follow a program that can be thought as an infinite dimensional version of the arguments in  \cite{Bretagnolle:1973}, where the Poisson integrals will play a central role.  

Let $p=\{ p_{t} \}_{t \geq 0}$ be a $\{ \mathcal{F}_{t} \}$-adapted stationary Poisson point process on $( \Phi'_{\beta}, \mathcal{B}(\Phi'_{\beta}))$. Let $N$ be the Poisson random measure on $[0,\infty) \times \Phi'_{\beta}$ associated to $p$, i.e. 
\begin{equation} \label{poissonRandomMeasure}
N_{p}(t,A)(\omega) = \sum_{0 \leq s \leq t} \ind{A}{p_{s}(\omega)}, \quad \forall \omega \in \Omega, \, t \geq 0, \, A \in \mathcal{B}(\Phi'_{\beta}).
\end{equation}

As $p$ is stationary, there exists a Borel measure $\nu_{p}$ on $\Phi'_{\beta}$ such that 
\begin{equation} \label{relationNandNup}
\Exp ( N_{p}(t,A))= t \nu_{p}(A),  \quad \forall \, t \geq 0, \, A \in \mathcal{B}(\Phi'_{\beta}).
\end{equation}
We call $\nu_{p}$ the \emph{characteristic measure} of $p$.

Let $A \in \mathcal{B}(\Phi'_{\beta})$ with $\nu_{p}(A)< \infty$. For each $t \geq 0$ the \emph{Poisson integral with respect to $N_{p}$} is defined by 
\begin{equation} \label{poissonIntegralForIdenity}
J^{(p)}_{t}(A)(\omega) \defeq \int_{A} f N_{p}(t,df)(\omega)=  \sum_{0 \leq s \leq t} p_{s}(\omega) \ind{A}{p_{s}(\omega)}, \quad \forall \omega \in \Omega.    
\end{equation}

From now on we assume that $p=\{ p_{t} \}_{t \geq 0}$ is a regular process in $( \Phi'_{\beta}, \mathcal{B}(\Phi'_{\beta}))$. The following result presents the main properties of the Poisson integral process. 

\begin{prop}\label{propPropertiesPoissonIntegrals}
The process $J^{(p)}(A)=\{ J^{(p)}_{t}(A) \}_{t \geq 0}$  
is a $\{ \mathcal{F}_{t} \}$-adapted $\Phi'_{\beta}$-valued, regular, c\`{a}dl\`{a}g, L\'{e}vy process. For every $t \geq 0$ the distribution of $J^{(p)}_{t}(A)$ is given by
\begin{equation} \label{distributionPoissonIntegral}
\Prob \left( \omega : J^{(p)}_{t}(A)(\omega) \in \Gamma \right)= e^{-t\nu_{p}(A)} \sum_{k=0}^{\infty} \frac{t^{k}}{k!} (\restr{\nu_{p}}{A})^{\ast k}\left(\Gamma \right), \quad \forall \,  \Gamma \in \mathcal{B}(\Phi'_{\beta}). 
\end{equation} 
and its characteristic function is 
\begin{equation} \label{charactFunctionPoissonIntegral}
\Exp \left( \exp\left\{ i J^{(p)}_{t}(A)[\phi] \right\} \right) = \exp \left\{ t \int_{A} \left( e^{i f[\phi]}-1 \right) \nu_{p} (d f) \right\}, \quad \forall \, \phi \in \Phi.   
\end{equation}
Moreover if $\int_{A} \abs{f[\phi]} \nu_{p} (df) < \infty$ for each $\phi \in \Phi$, then
\begin{equation} \label{meanPoissonIntegral}
\Exp \left( J^{(p)}_{t}(A)[\phi] \right) = t \int_{A} f[\phi] \nu_{p} (df), \quad \forall \, \phi \in \Phi,
\end{equation}
Furthermore if $\int_{A} \abs{f[\phi]}^{2} \nu_{p}(df) < \infty$ for each $\phi \in \Phi$, then
\begin{equation} \label{variancePoissonIntegral}
\mbox{Var} \left( J^{(p)}_{t}(A)[\phi] \right) = t \int_{A} \abs{f[\phi]}^{2} \nu_{p} (df), \quad \forall \, \phi \in \Phi. 
\end{equation}
\end{prop}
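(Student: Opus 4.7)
The plan is to reduce everything to classical finite-dimensional compound Poisson theory by exploiting the finiteness assumption $\nu_p(A) < \infty$. First, I would establish that the sum in \eqref{poissonIntegralForIdenity} is actually a finite sum almost surely: equation \eqref{relationNandNup} gives $\Exp N_p(t,A) = t\nu_p(A) < \infty$, so $N_p(t,A)$ is a.s.\ finite and integer-valued. Thus for a.e.\ $\omega$ the set of jump times of $p$ that fall in $A$ during $[0,t]$ is locally finite, $J^{(p)}_t(A)(\omega)$ is a genuine finite sum in $\Phi'_\beta$, the sample paths $t \mapsto J^{(p)}_t(A)(\omega)$ are piecewise constant càdlàg functions into $\Phi'_\beta$, and $\mathcal{F}_t$-adaptedness is inherited from the adaptedness of $p$.

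Next I would derive \eqref{distributionPoissonIntegral} and \eqref{charactFunctionPoissonIntegral} together. The classical marked-Poisson description (see \cite{IkedaWatanabe}, \S I.8--9) says that $N_p(t,A)$ is Poisson with parameter $t\nu_p(A)$, and conditional on $\{N_p(t,A)=k\}$ the $k$ points of $p$ that land in $A$ during $[0,t]$ are i.i.d.\ with law $\nu_p|_A/\nu_p(A)$. Summing over $k$ yields \eqref{distributionPoissonIntegral}. The characteristic function \eqref{charactFunctionPoissonIntegral} then follows by integrating $e^{if[\phi]}$ term-by-term in this Poisson mixture and recognizing the exponential series. The moment formulas \eqref{meanPoissonIntegral} and \eqref{variancePoissonIntegral} follow either from differentiating \eqref{charactFunctionPoissonIntegral} once and twice at zero (the integrability hypotheses justifying differentiation under the $\nu_p$-integral sign), or directly by Wald's identity applied to the conditional finite sum.

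For the Lévy-process properties, independence and stationarity of the increments of the $\R^n$-valued finite-dimensional projections $\{(J^{(p)}_t(A)[\phi_1], \dots, J^{(p)}_t(A)[\phi_n])\}_{t\geq 0}$ are inherited from the corresponding properties of the Poisson random measure $N_p$ on disjoint time intervals; stochastic continuity of each real-valued projection at $t_0$ is immediate from \eqref{charactFunctionPoissonIntegral}. Regularity of each $J^{(p)}_t(A)$ follows because the map $\phi \mapsto J^{(p)}_t(A)[\phi]$ from $\Phi$ into $L^0\ProbSpace$ is continuous: by \eqref{charactFunctionPoissonIntegral} its characteristic function is continuous at $0$, and Lévy's continuity theorem in $\R$ then gives convergence in probability, so Theorem \ref{theoCharacterizationRegularRV}((1)$\Leftrightarrow$(2)) applies. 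Finally Propositions \ref{propCondiIndistingProcess} and \ref{independenceStochProcessDualSpace} promote the cylindrical Lévy property to the $\Phi'_\beta$-valued Lévy property of Definition \ref{defiLevyProcess}.

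The main obstacle I anticipate is justifying that the compound-Poisson formula on the right-hand side of \eqref{distributionPoissonIntegral} genuinely defines a Radon probability measure on $\Phi'_\beta$: this hinges on $\nu_p|_A$ itself being Radon, which comes from the regularity of the point process $p$ (each $p_s$ is a regular $\Phi'_\beta$-valued random variable) combined with the finiteness $\nu_p(A) < \infty$, together with the fact that convolutions of Radon measures on $\Phi'_\beta$ remain Radon. Once this Radon structure is secured, the remainder of the argument is a mechanical translation of the Euclidean compound-Poisson calculation to the nuclear-dual setting.
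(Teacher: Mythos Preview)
Your approach is essentially the same as the paper's: the paper also says the adapted, c\`{a}dl\`{a}g, regular, independent-and-stationary-increment properties are ``immediate from the corresponding properties of the processes $p$, $\{N_p(t,A)\}_{t\geq 0}$'' and that \eqref{distributionPoissonIntegral}--\eqref{variancePoissonIntegral} follow from the analogous $\R^d$-valued results in Applebaum's book (Theorems 2.3.7 and 2.3.9). Your explicit marked-Poisson/Wald derivation is a legitimate unpacking of that reference.

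There is, however, one step you have not covered. Propositions \ref{propCondiIndistingProcess} and \ref{independenceStochProcessDualSpace} only transfer independence and versions; they do \emph{not} verify condition (4) of Definition \ref{defiLevyProcess}, namely that $t \mapsto \mu_{J^{(p)}_t(A)}$ is weakly continuous at $0$ in $\goth{M}_R^1(\Phi'_\beta)$. Stochastic continuity of each real projection $J^{(p)}_t(A)[\phi]$ gives pointwise convergence of characteristic functions but not, by itself, weak convergence of the full $\Phi'_\beta$-valued laws. The paper handles this directly from \eqref{distributionPoissonIntegral}: for $G \in C_b(\Phi'_\beta)$ with $\sup|G| \leq N$ one bounds $\bigl|\int G\,d\mu_{J^{(p)}_t(A)} - G(0)\bigr|$ by $e^{-t\nu_p(A)}\bigl(e^{t\nu_p(A)N}-1\bigr) \to 0$ as $t\to 0^+$. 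You should insert this (or an equicontinuity-plus-tightness argument along the lines of the proof of Theorem \ref{theoCylindrLevyProcessHaveLevyCadlagVersion}) to complete the verification.

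Finally, your ``main obstacle'' is slightly misplaced: once you have regularity of $J^{(p)}_t(A)$ via Theorem \ref{theoCharacterizationRegularRV} (your argument through continuity of the characteristic function \eqref{charactFunctionPoissonIntegral} is correct, using dominated convergence with the bound $|e^{if[\phi]}-1|\leq 2$ and $\nu_p(A)<\infty$), Radonness of its distribution follows from that same theorem. You do not need to separately prove that $\restr{\nu_p}{A}$ or its convolution powers are Radon in order for \eqref{distributionPoissonIntegral} to hold as an identity of Borel measures.
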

\begin{prf}
The fact that $J^{(p)}(A)$ is a $\{ \mathcal{F}_{t} \}$-adapted c\`{a}dl\`{a}g regular process with independent and stationary increments is immediate from  \eqref{poissonIntegralForIdenity} and the corresponding properties of the processes $p$ and $\{ N_{p}(t,A) \}_{t \geq 0}$. It is clear from \eqref{poissonIntegralForIdenity} that $J^{(p)}_{0}(A)=0$ $\Prob$-a.e. 

The proofs of \eqref{distributionPoissonIntegral}, \eqref{charactFunctionPoissonIntegral}, \eqref{meanPoissonIntegral} and \eqref{variancePoissonIntegral} follow from  similar arguments to those used in the proofs of Theorems 2.3.7 and 2.3.9 in \cite{ApplebaumLPSC} where analogous results are proved for the case of Poisson integrals defined by the Poisson random measure of a $\R^{d}$-valued L\'{e}vy process. 

Finally let $G \in C_{b}(\Phi'_{\beta})$ and let $N>0$ such that $\sup_{f \in \Phi'} \abs{G(f)} \leq N$. Then, from \eqref{distributionPoissonIntegral} we have: 
\begin{align*}
\lim_{t \rightarrow 0+} \abs{ \int_{\Phi'} G(f) \mu_{J^{(p)}_{t}(A)}(df) - \int_{\Phi'} G(f) \delta_{0}(df)} 
& \leq \lim_{t \rightarrow 0+} \abs{ e^{-t \nu_{p}(A)}  \sum_{k=1}^{\infty} \frac{(t\nu_{p}(A)N)^{k}}{k!} } \\
& = \lim_{t \rightarrow 0+} e^{-t \nu_{p}(A)} \left( e^{-t \nu_{p}(A)N}- 1 \right)=0.   
\end{align*}
Then it follows that the map $t \mapsto \mu_{J^{(p)}_{t}(A)}$ is weakly continuous. Hence, $J^{(p)}(A)$ is a $\Phi'_{\beta}$-valued L\'{e}vy process.  
\end{prf}

Now if $\int_{A} \abs{f[\phi]} \nu_{p} (df) < \infty$ for each $\phi \in \Phi$, then for each $t \geq 0$ we define the \emph{compensated Poisson integral with respect to} $N_{p}$ by 
\begin{equation} \label{compensatedPoissonIntegralForIdenity}
\widetilde{J}^{(p)}_{t}(A)[\phi] \defeq \int_{A} f \widetilde{N}_{p}(t,df)[\phi] = \int_{A} f N_{p}(t,df)[\phi] - t \int_{A} f[\phi] \nu_{p}(df), \quad \forall  \, \phi \in \Phi.      
\end{equation}
The process  $\widetilde{J}^{(p)}(A)=\{ \widetilde{J}^{(p)}_{t}(A)  \}_{t \geq 0}$ is a $\Phi'_{\beta}$-valued, mean-zero, square integrable $\{ \mathcal{F}_{t} \}$-adapted regular c\`{a}dl\`{a}g L\'{e}vy process. In particular, for each $\phi \in \Phi$ the process $\widetilde{J}^{(p)}(A)[\phi]$ is a real-valued martingale. Moreover for each $t \geq 0$ it follows from \eqref{charactFunctionPoissonIntegral} and \eqref{variancePoissonIntegral} that    
\begin{equation} \label{charactFunctionCompensatedPoissonIntegral}
\Exp \left( \exp\left\{ i \widetilde{J}^{(p)}_{t}(A)[\phi] \right\} \right) = \exp \left\{ t \int_{A} \left( e^{i f[\phi]}-1-if[\phi] \right) \nu_{p} (d f) \right\}, \quad  \forall \, \phi \in \Phi. 
\end{equation}
Furthermore if $\int_{A} \abs{f[\phi]}^{2} \nu_{p}(df) < \infty$, for each $\phi \in \Phi$, then
\begin{equation} \label{secondMomentCompensatedPoissonIntegral}
\Exp \left( \abs{ \widetilde{J}^{(p)}_{t}(A)[\phi] }^{2} \right) = t \int_{A} \abs{f[\phi]}^{2} \nu_{p} (df),  \quad \forall \, \phi \in \Phi. 
\end{equation}

Other important properties of Poisson integrals are summarized in the following result. 

\begin{theo} \label{independencePoissonIntegrals} Let $A_{1}, A_{2} \in \mathcal{B}(\Phi'_{\beta})$ disjoint sets with $\nu_{p}(A_{1}), \nu_{p}(A_{2})< \infty$. Then the processes $J^{(p)}(A_{1})$ and $J^{(p)}(A_{2})$ are independent. If moreover $\int_{A_{i}} \abs{f[\phi]} \nu_{p} (df) < \infty$, for all $\phi \in \Phi$, $i=1,2$, then the processes $\widetilde{J}^{(p)}(A_{1})$ and $\widetilde{J}^{(p)}(A_{2})$ are independent. 
\end{theo}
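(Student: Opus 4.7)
The plan is to reduce the independence of these $\Phi'_{\beta}$-valued processes to a joint characteristic function computation on finite-dimensional projections, and then exploit the disjointness of $A_1$ and $A_2$ at that level.

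First, since $J^{(p)}(A_1)$ and $J^{(p)}(A_2)$ are regular $\Phi'_{\beta}$-valued processes by Proposition \ref{propPropertiesPoissonIntegrals}, Proposition \ref{independenceStochProcessDualSpace} reduces the claim to showing that for every $n \in \N$ and every $\phi_{1}, \ldots, \phi_{n} \in \Phi$, the $\R^{n}$-valued processes
$X^{i}_{t} = (J^{(p)}_{t}(A_{i})[\phi_{1}], \ldots, J^{(p)}_{t}(A_{i})[\phi_{n}])$, $i=1,2$,
are independent. Both $X^{1}$ and $X^{2}$ are $\R^{n}$-valued L\'{e}vy processes, so their independence is equivalent to showing that for every partition $0 = t_{0} < t_{1} < \cdots < t_{m}$ the joint characteristic function of the increments $\{\Delta_{k} X^{i}\}$ factors as a product over $i = 1, 2$.

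The key computation is done increment by increment. Fix $0 \leq s < t$ and $\phi, \psi \in \Phi$, and set
$h(f) = f[\phi]\ind{A_{1}}{f} + f[\psi]\ind{A_{2}}{f}$;
since $A_{1} \cap A_{2} = \emptyset$, the random variable
$(J^{(p)}_{t}(A_{1}) - J^{(p)}_{s}(A_{1}))[\phi] + (J^{(p)}_{t}(A_{2}) - J^{(p)}_{s}(A_{2}))[\psi]$
equals the Poisson integral $\int_{A_{1}\cup A_{2}} h(f)(N_{p}(t,df)-N_{p}(s,df))$. Applying the Campbell-type argument behind \eqref{charactFunctionPoissonIntegral} to this integral and using stationarity of increments gives its characteristic function as
\[
\exp\!\left((t-s)\int_{A_{1}}(e^{if[\phi]}-1)\,\nu_{p}(df) + (t-s)\int_{A_{2}}(e^{if[\psi]}-1)\,\nu_{p}(df)\right),
\]
which by \eqref{charactFunctionPoissonIntegral} is exactly the product of the characteristic functions of $(J^{(p)}_{t}(A_{1}) - J^{(p)}_{s}(A_{1}))[\phi]$ and $(J^{(p)}_{t}(A_{2}) - J^{(p)}_{s}(A_{2}))[\psi]$. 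Replacing $\phi, \psi$ with linear combinations $\sum_{j} \alpha_{j} \phi_{j}$, $\sum_{j} \beta_{j} \phi_{j}$ and multiplying over the partition (both increments of each $X^{i}$ and the joint pair-increments are independent of $\mathcal{F}_{s}$, as they are $\sigma(p_{u} : s \leq u \leq t)$-measurable) yields the required factorization, and hence independence of $X^{1}$ and $X^{2}$.

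For the compensated processes $\widetilde{J}^{(p)}(A_{i})$ the same argument works verbatim with \eqref{charactFunctionCompensatedPoissonIntegral} in place of \eqref{charactFunctionPoissonIntegral}: the compensator term $\int_{A} f[\phi]\nu_{p}(df)$ is additive over disjoint $A$, so no coupling between $A_{1}$ and $A_{2}$ is introduced. The main obstacle is the auxiliary step of extending the characteristic function formula \eqref{charactFunctionPoissonIntegral} from integrands of the form $f \mapsto f[\phi]$ to mixed integrands like $h$ above; this is handled either by repeating the derivation of \eqref{charactFunctionPoissonIntegral} for $h$ on $A_{1} \cup A_{2}$, or, equivalently, by invoking the standard fact that the Poisson random measure values $\{N_{p}(t,B) : B \in \mathcal{B}(A_{1})\}$ and $\{N_{p}(t,B) : B \in \mathcal{B}(A_{2})\}$ are independent processes because $A_{1}$ and $A_{2}$ are disjoint.
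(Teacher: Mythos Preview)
Your proof is correct and follows essentially the same strategy as the paper: both reduce to finite-dimensional projections via Proposition \ref{independenceStochProcessDualSpace} and then establish independence at that level using the disjointness of $A_{1}$ and $A_{2}$. The only minor difference is that the paper cites Applebaum's Theorem 2.4.6 (compound Poisson processes with jumps at distinct times are independent) for the finite-dimensional step, whereas you carry out the equivalent characteristic-function factorization explicitly; for the compensated case the paper simply notes that adding deterministic compensators preserves independence, which is slightly cleaner than rerunning the computation with \eqref{charactFunctionCompensatedPoissonIntegral}.
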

\begin{prf}
Let $\phi_{1}, \dots, \phi_{n} \in \Phi$. Then it follows from \eqref{poissonIntegralForIdenity}  that the $\R^{n}$-valued stochastic processes $(J^{(p)}(A_{1})[\phi_{1}], \dots, J^{(p)}(A_{1})[\phi_{n}])$ and $(J^{(p)}(A_{2})[\phi_{1}], \dots, J^{(p)}(A_{2})[\phi_{n}])$ are compound Poisson processes whose jumps occurs at distinct times for each $\omega \in \Omega$ due to the fact that $A_{1}$ and $A_{2}$ are disjoint. Then the same arguments used in the proof of Theorem 2.4.6 of \cite{ApplebaumLPSC} p.116 show that the processes $(J^{(p)}(A_{1})[\phi_{1}], \dots, J^{(p)}(A_{1})[\phi_{n}])$ and $(J^{(p)}(A_{2})[\phi_{1}], \dots, J^{(p)}(A_{2})[\phi_{n}])$ are independent. Then, as the processes $J^{(p)}(A_{1})$ and $J^{(p)}(A_{2})$ are regular, it follows from Proposition \ref{independenceStochProcessDualSpace} that they are independent. 

Now if the integrability condition $\int_{A_{i}} \abs{f[\phi]} \nu_{p} (df) < \infty$, for all $\phi \in \Phi$, $i=1,2$, is satisfied, the independence of $\widetilde{J}^{(p)}(A_{1})$ and $\widetilde{J}^{(p)}(A_{2})$ follows immediately from the independence of $J^{(p)}(A_{1})$ and $J^{(p)}(A_{2})$. 
\end{prf}

\subsection{The Poisson random measure and Poisson integrals of  a L\'{e}vy process}\label{subSectionPMPILP}

For the L\'{e}vy process $L=\left\{ L_{t} \right\}_{t\geq 0}$, we define by $\Delta L_{t} \defeq L_{t}-L_{t-}$ the \emph{jump} of the process $L$ at the time $t\geq 0$. Note that from Assumption \ref{generalAssumptionsLevyProcess} we have that $\Delta L= \{ \Delta L_{t} \}_{t \geq 0}$ is a $\{ \mathcal{F}_{t} \}$-adapted $\Phi'_{\beta}$-valued regular stochastic process.   

We say that a set $A \in \mathcal{B}(\Phi'_{\beta} \setminus \{ 0\})$ is \emph{bounded below} if $0 \notin \overline{A}$, where $\overline{A}$ is the closure of $A$.  Then $A$ is bounded below if and only if $A$ is contained in the complement of a neighborhood of zero. We denote by $\mathcal{A}$ the collection of all the subsets of $\Phi'_{\beta} \setminus \{0\}$ that are bounded below. Clearly $\mathcal{A}$ is a ring. 

For $A \in \mathcal{B}( \Phi'_{\beta} \setminus \{ 0\})$ and $t \geq 0$ define
\begin{equation*} 
N(t,A)(\omega)=\# \left\{ 0 \leq s \leq t \st \Delta L_{s}(\omega) \in A \right\} = \sum_{0 \leq s \leq t} \ind{A}{\Delta L_{s}(\omega)}, \quad \mbox{if } \omega \in \Omega_{L}
\end{equation*}
and $N(t,A)(\omega)=0$ if $\omega \in \Omega_{L}^{c}$. 

We have from our Assumption \ref{generalAssumptionsLevyProcess}(2) that $L$ is a 
$(\widetilde{\Phi_{\vartheta_{L}}})'_{\beta}$-valued c\`{a}dl\`{a}g process. Then, the fact that $\widetilde{\Phi_{\vartheta_{L}}}$ is an ultrabornological space, Proposition 3.10 in \cite{FonsecaMora:2018} and the arguments in the proof of Proposition 3.3 in \cite{FonsecaMora:2018} (see Remark 3.9 in \cite{FonsecaMora:2018}), imply that for every $\omega \in \Omega_{L}$ and $t \geq 0$, there exists a continuous Hilbertian semi-norm $\varrho=\varrho(\omega,t)$ on $\Phi$ such that the map $s \mapsto L_{s}(\omega)$ is c\`{a}dl\`{a}g from $[0,t]$ into the Hilbert space $\Phi'_{\varrho}$. But as $\Phi'_{\varrho}$ is a complete separable metric space, the above implies that $\Delta L_{s}(\omega) \neq 0$ for a finite number of $s \in [0,t]$. Hence $A \mapsto N(t,A)(\omega)$ is a counting measure on $\left( \Phi'_{\beta} \setminus \{ 0 \}, \mathcal{B}(\Phi'_{\beta} \setminus \{ 0\}) \right)$. Then $\Delta L= \{ \Delta L_{t} \}_{t \geq 0}$ is a regular stationary Poisson point processes on $\left( \Phi'_{\beta} \setminus \{ 0 \}, \mathcal{B}(\Phi'_{\beta} \setminus \{ 0\}) \right)$ and $N=\{N(t,A): \, t \geq 0, A \in \mathcal{B}(\Phi'_{\beta} \setminus \{ 0\})\}$ is the \emph{Poisson random measure} associated to $\Delta L$ with respect to the ring $\mathcal{A}$. 
Let $\nu$ be the characteristic measure of $\Delta L$, i.e. the Borel measure on $\Phi'_{\beta}$ defined by $\nu( \{ 0 \})=0$ and that satisfies:
\begin{equation} \label{intensityMeasurePoissonRandomMeasure}
\Exp \left( N(t,\Gamma)\right) = t \nu(\Gamma), \quad \forall \, t \geq 0, \, \Gamma \in \mathcal{B}\left(\Phi'_{\beta} \setminus \{0\}\right).
\end{equation}  
Clearly $\nu(A)< \infty$ for every $A \in \mathcal{A}$. 

\begin{defi} \label{defiRegularMeasure}
Let $\mu$ be a Borel measure on $\Phi'_{\beta}$. We will say that $\mu$ is a $\theta$-\emph{regular} measure on $\Phi'_{\beta}$ if there exists a weaker countably Hilbertian topology $\theta$ on $\Phi$ such that $\mu$ is concentrated on $\Phi'_{\theta}$, i.e. $\mu(\Phi'_{\beta} \setminus \Phi'_{\theta})=0$. 
\end{defi}

\begin{lemm} \label{lemmRestrLevyMeasureIsRadon}
The measure $\nu$ is $\theta_{L}$-regular (where $\theta_{L}$ is as in Assumption \ref{generalAssumptionsLevyProcess}). Moreover for every $A \in \mathcal{B}(\Phi'_{\beta})$ such that $\nu(A)< \infty$ (in particular if $A \in \mathcal{A}$), $\restr{\nu}{A}$ is $\theta_{L}$-regular and $\restr{\nu}{A} \in \goth{M}_{R}^{b}(\Phi'_{\beta})$. 
\end{lemm}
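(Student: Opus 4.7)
The plan is to prove the two claims using Assumption \ref{generalAssumptionsLevyProcess}\emph{(2)} together with the defining identity \eqref{intensityMeasurePoissonRandomMeasure} for the first, and standard inner-regularity arguments on separable Hilbert subspaces of $\Phi'_\beta$ for the second.

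For the $\vartheta_L$-regularity of $\nu$, observe that for every $\omega \in \Omega_L$ the c\`{a}dl\`{a}g path $s \mapsto L_s(\omega)$ lies in $(\widetilde{\Phi_{\vartheta_L}})'_\beta$, so every left limit $L_{s-}(\omega)$ also does, and therefore every jump $\Delta L_s(\omega) = L_s(\omega) - L_{s-}(\omega)$ belongs to $(\widetilde{\Phi_{\vartheta_L}})' = \Phi'_{\vartheta_L}$. Taking $\Gamma = \Phi'_\beta \setminus \Phi'_{\vartheta_L}$ in the definition of $N$, this yields $N(t,\Gamma)(\omega) = 0$ for every $\omega \in \Omega_L$; since $\Prob(\Omega_L)=1$, identity \eqref{intensityMeasurePoissonRandomMeasure} gives $t\,\nu(\Gamma) = \Exp N(t,\Gamma) = 0$ for every $t>0$, and hence $\nu(\Phi'_\beta \setminus \Phi'_{\vartheta_L}) = 0$.

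For the second part, fix $A \in \mathcal{B}(\Phi'_\beta)$ with $\nu(A)<\infty$. Boundedness and $\vartheta_L$-regularity of $\restr{\nu}{A}$ are automatic, since the total mass equals $\nu(A)<\infty$ and $\restr{\nu}{A}(\Phi'_\beta \setminus \Phi'_{\vartheta_L}) \leq \nu(\Phi'_\beta \setminus \Phi'_{\vartheta_L}) = 0$. To obtain Radonness, I would use the representation $\Phi'_{\vartheta_L} = \bigcup_n \Phi'_{p_n}$ for a defining increasing sequence $\{p_n\}_{n \in \N}$ of continuous Hilbertian semi-norms of $\vartheta_L$ (Proposition 2.4 of \cite{FonsecaMora:2016}), where each $\Phi'_{p_n}$ is a separable Hilbert space continuously embedded in $\Phi'_\beta$. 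Given $\epsilon>0$, monotone convergence of $\nu(A \cap \Phi'_{p_n}) \nearrow \nu(A)<\infty$ furnishes $n_0$ with $\nu(A \setminus \Phi'_{p_{n_0}})<\epsilon/2$. The measure $\restr{\nu}{A \cap \Phi'_{p_{n_0}}}$ is a finite Borel measure on the Polish space $\Phi'_{p_{n_0}}$, hence Radon there, so one can pick a compact $K \subseteq A \cap \Phi'_{p_{n_0}}$ (in the $\Phi'_{p_{n_0}}$-topology) with $\nu((A \cap \Phi'_{p_{n_0}}) \setminus K) < \epsilon/2$. Continuity of the canonical inclusion $i_{p_{n_0}}' : \Phi'_{p_{n_0}} \hookrightarrow \Phi'_\beta$ keeps $K$ compact in $\Phi'_\beta$ and gives $\restr{\nu}{A}(\Phi'_\beta \setminus K) < \epsilon$; the usual upgrade from tightness to full inner regularity on arbitrary Borel subsets of $\Phi'_\beta$ is then routine.

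The main obstacle I anticipate is the measurability of the subspaces $\Phi'_{p_n}$ and $\Phi'_{\vartheta_L}$ viewed inside $\Phi'_\beta$, since these images under continuous injections are in general only Souslin rather than Borel. This can be handled by working modulo $\nu$-null sets or by invoking the universal measurability of Souslin sets, a framework that is already implicit in the Radon-measure setup adopted throughout the paper.
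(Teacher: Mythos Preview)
Your proof is correct. The first part (the $\vartheta_L$-regularity of $\nu$) is exactly the paper's argument.

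For the second part, the paper takes a shorter route: it normalises $\restr{\nu}{A}$ to a probability measure, regards it as the law of a canonical $\Phi'_\beta$-valued random variable $X_{\nu,A}$, observes that $X_{\nu,A}$ is a \emph{regular} random variable (since its law is concentrated on $\Phi'_{\vartheta_L}$), and then simply cites Theorem~\ref{theoCharacterizationRegularRV} (implication $(1)\Rightarrow(3)$) to conclude Radonness. Your argument instead unpacks precisely what goes into that implication: exhaust $\Phi'_{\vartheta_L}$ by the separable Hilbert spaces $\Phi'_{p_n}$, use that finite Borel measures on Polish spaces are tight, and push compacts forward along the continuous inclusion $i'_{p_n}$. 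Both approaches rest on the same mechanism; the paper's is more economical because the work has already been done in Theorem~\ref{theoCharacterizationRegularRV}, while yours is self-contained.

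One remark on your stated obstacle: the measurability worry is actually harmless here, and you do not need Souslin-set machinery. Each $\Phi'_{p_n}$ is an $F_\sigma$ subset of $\Phi'_\beta$, since $\Phi'_{p_n}=\bigcup_{m\in\N} B_{p_n'}(m)$ and each ball $B_{p_n'}(m)=B_{p_n}(1/m)^{0}$ is a polar set, hence weak-$*$ closed and in particular $\beta$-closed. Moreover, because $i_{p_n}(\Phi)$ is dense in the separable Hilbert space $\Phi_{p_n}$, the cylindrical evaluations $f\mapsto f[\phi]$, $\phi\in\Phi$, already generate $\mathcal{B}(\Phi'_{p_n})$, so the trace of $\mathcal{B}(\Phi'_\beta)$ on $\Phi'_{p_n}$ coincides with $\mathcal{B}(\Phi'_{p_n})$ and your Polish-space tightness step applies without further ado.
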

\begin{prf}
First, note that from Assumption \ref{generalAssumptionsLevyProcess}(2) we have that $\Delta L_{t} \in \Phi'_{\theta_{L}}$ $\forall t \geq 0$ $\Prob$-a.e. and hence from \eqref{intensityMeasurePoissonRandomMeasure} we have that $\nu \left( \Phi'_{\beta} \setminus \Phi'_{\theta_{L}}\right)=0$ and hence $\nu$ is $\theta_{L}$-regular. 

Now, let $A \in \mathcal{B}(\Phi'_{\beta})$ such that $\nu(A)< \infty$. Because the measure $\nu$ is $\theta_{L}$-regular so is $\restr{\nu}{A}$. If we consider the canonical $\Phi'_{\beta}$-valued random variable $X_{\nu,A}$ whose probability distribution is $\displaystyle{\restr{\nu}{A}(\cdot) \, / \, \restr{\nu}{A}(\Phi'_{\theta_{L}})}$, we then have that $\Prob (X_{\nu,A} \in \Phi'_{\theta_{L}})=1$ and hence $X_{\nu,A}$ is a regular random variable. Therefore, Theorem \ref{theoCharacterizationRegularRV} shows that the probability distribution of $X_{\nu,A}$ is a Radon measure on $\Phi'_{\beta}$. Then, $\restr{\nu}{A} \in \goth{M}_{R}^{b}(\Phi'_{\beta})$. 
\end{prf}

For every $A \in \mathcal{B}(\Phi'_{\beta})$ such that $\nu(A)< \infty$, we will denote by $J(A)$ the Poisson integral with respect to $N$ and if $\int_{A} \abs{f[\phi]}^{2} \nu(df) < \infty$, for each $\phi \in \Phi$, we denote by $\widetilde{J}(A)$ the compensated Poisson integral with respect to $N$.  

\begin{theo} \label{independenceLevyAndItsPoissonIntegrals} 
Let $A \in \mathcal{B}(\Phi'_{\beta})$ with $\nu(A)< \infty$. Then $ L-J(A) = \left\{ L_{t} -  J_{t}(A) \right\}_{t \geq 0}$ is a $\Phi'_{\beta}$-valued L\'{e}vy  process. Moreover the processes $L- J(A)$ and $J(A)$ are independent. 
\end{theo}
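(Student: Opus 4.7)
The plan is to reduce both assertions simultaneously to the classical finite-dimensional fact that, for a Lévy process in $\R^{d}$ and a Borel set bounded below in $\R^{d}$, the compound Poisson process built from the jumps hitting that set is independent of the remainder (Theorem 2.4.6 in \cite{ApplebaumLPSC}). The difficulty is that $A\subseteq\Phi'_{\beta}$ is not the preimage of a Borel set in $\R^{n}$ under any cylindrical projection, so one cannot apply the finite-dimensional result to the plain projection $(L_{t}[\phi_{1}],\dots,L_{t}[\phi_{n}])$; this is the main technical obstacle. The trick is to \emph{augment} the cylindrical projection by the Poisson counting process $N(\cdot,A)$, turning the condition ``$\Delta L_{s}\in A$'' into membership in a hyperplane of $\R^{n+1}$ that \emph{is} bounded away from $0$.

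Concretely, I would fix $\phi_{1},\dots,\phi_{n}\in\Phi$ and define
\[
X_{t}\defeq\bigl(L_{t}[\phi_{1}],\dots,L_{t}[\phi_{n}],\,N(t,A)\bigr)\in\R^{n+1}.
\]
The first step is to verify that $X=\{X_{t}\}_{t\geq 0}$ is an $\R^{n+1}$-valued Lévy process: independent and stationary increments come from Assumption \ref{generalAssumptionsLevyProcess}(1) and the stationarity of $L$ together with the fact that $N((s,t],A)$ is a measurable functional of the jumps of $L$ on $(s,t]$; stochastic continuity is immediate because $N(t,A)$ is Poisson$(t\nu(A))$ and $L$ is weakly continuous at $0$. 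The jumps of $X$ satisfy $\Delta X_{s}=(\Delta L_{s}[\phi_{1}],\dots,\Delta L_{s}[\phi_{n}],\mathbf{1}_{A}(\Delta L_{s}))$.

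The second step is to apply the classical decomposition to $X$ with the Borel set $D\defeq\R^{n}\times\{1\}\subset\R^{n+1}$, which is closed, bounded below, and satisfies $\nu^{X}(D)=\nu(A)<\infty$. This yields that
\[
X^{D}_{t}\defeq\sum_{0\leq s\leq t}\Delta X_{s}\mathbf{1}_{D}(\Delta X_{s})=\bigl(J_{t}(A)[\phi_{1}],\dots,J_{t}(A)[\phi_{n}],N(t,A)\bigr)
\]
is a compound Poisson process independent of the Lévy process $X-X^{D}$, whose first $n$ coordinates are exactly $((L_{t}-J_{t}(A))[\phi_{j}])_{j=1}^{n}$ and whose last coordinate vanishes. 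Consequently, the $\R^{n}$-valued process $((L_{t}-J_{t}(A))[\phi_{j}])_{j,t}$ is a Lévy process in $\R^{n}$ and is independent of $(J_{t}(A)[\phi_{j}])_{j,t}$.

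Finally, since $\phi_{1},\dots,\phi_{n}$ were arbitrary and both $L-J(A)$ and $J(A)$ are $\Phi'_{\beta}$-valued regular processes (regularity of $L-J(A)$ follows from regularity of $L$ by Assumption \ref{generalAssumptionsLevyProcess} and of $J(A)$ by Proposition \ref{propPropertiesPoissonIntegrals}), Proposition \ref{independenceStochProcessDualSpace} delivers the independence of $L-J(A)$ and $J(A)$ as $\Phi'_{\beta}$-valued processes. The Lévy-process properties of $L-J(A)$ from Definition \ref{defiLevyProcess} are then collected: $L_{0}-J_{0}(A)=0$ is immediate; independent and stationary increments follow from the finite-dimensional Lévy property established in the second step together with Proposition \ref{independenceStochProcessDualSpace} and regularity; the distribution of each $L_{t}-J_{t}(A)$ is Radon because the random variable is regular (Theorem \ref{theoCharacterizationRegularRV}); and weak continuity at $0$ follows from the càdlàg property of $L-J(A)$ in $\Phi'_{\beta}$ (inherited from the càdlàg properties of $L$ and $J(A)$), which forces $L_{t}-J_{t}(A)\to 0$ $\Prob$-a.s., hence weakly, as $t\downarrow 0$.
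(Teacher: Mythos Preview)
Your proof is correct and takes a genuinely different route from the paper. The paper first transplants the $\R^{n}$-argument (Applebaum, Theorem 2.4.8) to show $L-J(A)$ is a $\Phi'_{\beta}$-valued L\'{e}vy process, and then, for independence, observes that the projected processes $\bigl((L-J(A))[\phi_{j}]\bigr)_{j}$ and $\bigl(J(A)[\phi_{j}]\bigr)_{j}$ are $\R^{n}$-valued L\'{e}vy processes with \emph{disjoint jump times} (when $\Delta L_{s}\in A$ only $J(A)$ jumps, otherwise only $L-J(A)$ can jump), and invokes a general result from Medvegyev (Lemma 7.9 and Theorem 7.12) that two L\'{e}vy processes never jumping simultaneously are independent. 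Your augmentation trick sidesteps the disjoint-jumps lemma entirely: adding $N(\cdot,A)$ as an $(n{+}1)$st coordinate turns the non-cylindrical event $\{\Delta L_{s}\in A\}$ into $\{\Delta X_{s}\in \R^{n}\times\{1\}\}$, a set bounded below in $\R^{n+1}$, so the \emph{standard} finite-dimensional compound-Poisson splitting delivers both the L\'{e}vy property of the projections of $L-J(A)$ and the independence in one stroke. The paper's argument is shorter on the page but leans on the less commonly cited Medvegyev result; your approach needs only the textbook splitting theorem and handles both conclusions simultaneously. One minor caution: make sure the exact reference you cite in \cite{ApplebaumLPSC} really gives the joint statement ``$X^{D}$ and $X-X^{D}$ are independent L\'{e}vy processes for $D$ bounded below''; in Applebaum's second edition this is typically assembled from Theorems 2.4.6 and 2.4.8 together rather than stated in 2.4.6 alone.
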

\begin{prf}
First, similar arguments to those used in the proof of Theorem 2.4.8 of  \cite{ApplebaumLPSC} for the case of $\R^{n}$-valued L\'{e}vy processes show that $L-J(A)$ is a $\Phi'_{\beta}$-valued L\'{e}vy process. To prove the independence of $L- J(A)$ and $J(A)$, let $\phi_{1}, \dots, \phi_{n} \in \Phi$. As $\left((L-J(A))[\phi_{1}], \dots, (L-J(A))[\phi_{n}] \right)$ and $\left( J(A)[\phi_{1}], \dots, J(A)[\phi_{n}] \right)$ are $\R^{n}$-valued L\'{e}vy processes that have their jumps at distinct times for each $\omega \in \Omega$, the same arguments used in the proof of Lemma 7.9 and Theorem 7.12 of \cite{Medvegyev} p.468-71 show that the processes $\left((L-J(A))[\phi_{1}], \dots, (L-J(A))[\phi_{n}] \right)$ and $\left( J(A)[\phi_{1}], \dots, J(A)[\phi_{n}] \right)$ are independent. Then the independence of $L-J(A)$ and $J(A)$ follows from Proposition \ref{independenceStochProcessDualSpace} as both $L-J(A)$ and $J(A)$ are regular processes.  
\end{prf}

\subsection{L\'{e}vy Measures on the Dual of a Nuclear Space}\label{subSectionLM}

L\'{e}vy measures play an important role on the study of L\'{e}vy processes and infinitely divisible measures. In this section we introduce our definition of L\'{e}vy measure and present some of its basic properties.  

\begin{defi} \label{defiLevyMeasureDualNuclearSpace}
A Borel measure $\lambda$ on $\Phi'_{\beta}$ is a L\'{e}vy measure if 
\begin{enumerate}
\item $\lambda (\{ 0 \})=0$, 
\item for each neighborhood of zero $U \subseteq \Phi'_{\beta}$, $\restr{\lambda}{U^{c}}  \in  \goth{M}^{b}_{R}(\Phi'_{\beta})$, 
\item there exists a continuous Hilbertian semi-norm $\rho$ on $\Phi$ such that 
\begin{equation} \label{integrabilityPropertyLevyMeasure}
\int_{B_{\rho'}(1)} \rho'(f)^{2} \lambda (df) < \infty,  \quad \mbox{and} \quad  \restr{\lambda}{B_{\rho'}(1)^{c}} \in \goth{M}^{b}_{R}(\Phi'_{\beta}), 
\end{equation}
where we recall that $B_{\rho'}(1) \defeq  \{f \in \Phi': \rho'(f) \leq 1\} = B_{\rho}(1)^{0}$. 

\end{enumerate}
\end{defi}
Note that \eqref{integrabilityPropertyLevyMeasure} implies that 
\begin{equation} \label{equDefLevyMeasure}
\int_{\Phi'} (\rho'(f)^{2} \wedge 1) \lambda (df) < \infty,
\end{equation}
which resembles the property that characterizes L\'{e}vy measures on Hilbert spaces (see \cite{Parthasarathy}).

\begin{prop} \label{propLevyMeasuresAreRadon}
Every L\'{e}vy measure on $\Phi'_{\beta}$ is a $\sigma$-finite Radon measure. Moreover if $\Phi$ is a barrelled nuclear space, every L\'{e}vy measure on $\Phi'_{\beta}$ is $\theta$-regular.
\end{prop}
\begin{prf}
Let $\lambda$ be a L\'{e}vy measure on $\Phi'_{\beta}$ and let $\rho$ as in Definition \ref{defiLevyMeasureDualNuclearSpace}(3). From \eqref{equDefLevyMeasure} and standard arguments we have that  
$\lambda( B_{\rho'}(\epsilon)^{c})< \infty$ $\forall \, 0 <\epsilon \leq 1$. But the above together with $\lambda (\{ 0 \})=0$ imply that $\lambda$ is $\sigma$-finite. 

To prove that $\lambda$ is Radon, note that because $\restr{\lambda}{B_{\rho'}(1)^{c}} \in \goth{M}^{b}_{R}(\Phi'_{\beta})$, it is enough to show that $\restr{\lambda}{B_{\rho'}(1)} \in \goth{M}^{b}_{R}(\Phi'_{\beta})$. To show this, let $q: \Phi \rightarrow \R$ defined by 
$$q(\phi)^{2}=\int_{B_{\rho'}(1)} \abs{f[\phi]}^{2} \lambda(df), \quad \forall \, \phi \in \Phi.$$ 
It is clear that $q$ is a Hilbertian semi-norm on $\Phi$. Moreover because $q(\phi)^{2} \leq C \rho(\phi)^{2}$ for all $\phi \in \Phi$, where $C=\int_{B_{\rho'}(1)} \rho'(f)^{2} \lambda (df) < \infty$, then $q$ is continuous on $\Phi$. 

Now note that for every $\phi \in \Phi$ we have 
\begin{equation*}
1-\mbox{Re} \, \widehat{ \restr{\lambda}{B_{\rho'}(1)} }(\phi) 
= \int_{B_{\rho'}(1)} (1-\cos f[\phi]) \lambda (df) \leq \frac{1}{2} \int_{B_{\rho'}(1)} {f[\phi]}^{2} \lambda (df)= \frac{1}{2} q(\phi)^{2}.
\end{equation*}
Then it follows that $\widehat{ \restr{\lambda}{B_{\rho'}(1)} }$ is continuous on $\Phi$. Finally by Minlos' theorem (see \cite{DaleckyFomin}, Theorem III.1.3, p.88) this shows that $ \restr{\lambda}{B_{\rho'}(1)} $ is a Radon measure on $\Phi'_{\beta}$. Therefore $\lambda \in \goth{M}^{b}_{R}(\Phi'_{\beta})$. 

Finally if $\Phi$ is a barrelled nuclear space, the fact that $\lambda$ is $\theta$-regular is a consequence of Theorem \ref{theoCharacterizationRegularRV} and the fact that $\lambda$ is a Radon measure on $\Phi'_{\beta}$.   
\end{prf}

\subsection{The L\'{e}vy Measure of a L\'{e}vy process}\label{subSectionLMLP}

We proceed to show that the measure $\nu$ associated to the Poisson measure $N$ of the L\'{e}vy process $L$ is a L\'{e}vy measure on $\Phi'_{\beta}$.  We start by recalling the concept of Poisson measure that will be of great importance for our 	forthcoming arguments. 

Let $\mu \in \goth{M}_{R}^{b}(\Phi'_{\beta})$. The measure $e(\mu) \in \goth{M}_{R}^{1}(\Phi'_{\beta})$ defined by 
$$ e(\mu)(\Gamma)=e^{-\mu(\Phi'_{\beta})} \sum_{k=0}^{\infty} \frac{1}{k!} \mu^{\ast k}(\Gamma), \quad \forall \, \Gamma \in \mathcal{B}(\Phi'_{\beta}),$$ 
is called a \emph{Poisson measure}. We call $\mu$ the \emph{Poisson exponent} of $e(\mu)$. It is clear that $e(\mu)$ is infinitely divisible and that  
\begin{equation} \label{charactFunctPoissonMeas}
\widehat{e(\mu)}(\phi)=\exp \left[ -(\widehat{\mu}(0)-\widehat{\mu}(\phi))\right], \quad \forall \, \phi \in \Phi.
\end{equation}
Observe  that for $\mu \in \goth{M}_{R}^{b}(\Phi'_{\beta})$, $\abs{\widehat{e(\mu)}(\phi)}^{2}$ is the characteristic function of a measure belonging to $\goth{M}_{R}^{b}(\Phi'_{\beta})$. Indeed, it is the characteristic function of the measure $e(\mu+\overline{\mu})=e(\mu) \ast e(\overline{\mu})$, where $\overline{\mu} \in \goth{M}_{R}^{b}(\Phi'_{\beta})$ is defined by $\overline{\mu}(\Gamma)=\mu(-\Gamma)$ for all $\Gamma \in \mathcal{B}(\Phi'_{\beta})$. 

Now to show that $\nu$ is a L\'{e}vy measure we will need two preliminary results. The following is a mild generalization of a result due to Fernique for the characteristic function of infinitely divisible measures on the space of distributions $\mathscr{D}'$. Its proof easily extends to our case so we omit it and refer the reader to \cite{Fernique:1967}, Corollaire 2.  

\begin{lemm}\label{ferniqueLemma}
Let $\mu$ be an infinitely divisible measure on $\Phi'_{\beta}$. 
% and for each $n \in \N$, let $\mu_{n}$ be a $n$-th root of $\mu$. 
Then, for every continuous Hilbertian seminorm $p$ on $\Phi$ and every $\epsilon \in \, ]0,\frac{1}{4}]$ such that:
$$ \forall \, \phi \in \Phi, \quad p(\phi) \leq 1 \quad \Rightarrow \quad \abs{ 1- \widehat{\mu}(\phi)} < \epsilon, $$
we have that 
$$ \forall \, \phi \in \Phi, \, \forall \, n \in \N, \quad n \cdot (1- \emph{Re} \, \widehat{\mu}^{1/n}(\phi) ) \leq 8 \epsilon (1+p(\phi)^{2}).$$  
\end{lemm}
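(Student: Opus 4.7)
The plan is to establish the desired inequality first on the small ball $B_{p}(1)$ by direct analytic estimates that exploit infinite divisibility, and then to extend it to all of $\Phi$ via the elementary ``doubling'' inequality $1 - \cos(2\alpha) \leq 4(1-\cos\alpha)$. In the form $1 - \text{Re}\,\widehat{\sigma}(2\phi) \leq 4(1 - \text{Re}\,\widehat{\sigma}(\phi))$, this is valid for any probability characteristic function $\widehat{\sigma}$ and is the engine of Fernique-type extension arguments.

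Since $\mu$ is infinitely divisible, $\widehat{\mu}$ is nowhere zero on $\Phi$ and admits a continuous logarithm $\psi$ with $\psi(0)=0$; the continuous $n$-th root is $\widehat{\mu}^{1/n}(\phi) = \exp(\psi(\phi)/n)$. For $\phi \in B_{p}(1)$, the hypothesis $|1-\widehat{\mu}(\phi)|<\epsilon\leq 1/4$ places $\widehat{\mu}(\phi)$ in the disc $D(1,\epsilon)$, and the series bound $|\log(1+w)| \leq |w|/(1-|w|)$ gives $|\psi(\phi)| \leq 4\epsilon/3$. I then use the identity
\[
2\bigl(1-\text{Re}\,\widehat{\mu}^{1/n}(\phi)\bigr) = \bigl(1 - |\widehat{\mu}^{1/n}(\phi)|^{2}\bigr) + |1-\widehat{\mu}^{1/n}(\phi)|^{2}
\]
and bound each summand separately. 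Bernoulli's inequality $(1-\delta)^{1/n} \geq 1 - \delta/n$ with $\delta = 1 - |\widehat{\mu}(\phi)|^{2}$, combined with $1 - |z|^{2} \leq 2|1-z|$ for $|z|\leq 1$, yields $n(1-|\widehat{\mu}^{1/n}(\phi)|^{2}) \leq 1-|\widehat{\mu}(\phi)|^{2} \leq 2\epsilon$. The elementary bound $|e^{w}-1| \leq |w|\,e^{|w|}$ with $w=\psi(\phi)/n$ gives $|1-\widehat{\mu}^{1/n}(\phi)| \leq (4\epsilon/3n)\,e^{4\epsilon/(3n)} \leq 2\epsilon/n$, whence $n|1-\widehat{\mu}^{1/n}(\phi)|^{2} \leq 4\epsilon^{2}/n \leq \epsilon$. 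Adding these yields $n(1-\text{Re}\,\widehat{\mu}^{1/n}(\phi)) \leq \tfrac{3}{2}\epsilon$ for every $\phi \in B_{p}(1)$.

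For arbitrary $\phi \in \Phi$ with $p(\phi) > 1$, I choose $k \in \N$ minimal with $p(\phi) \leq 2^{k}$, so that $\phi/2^{k} \in B_{p}(1)$ and $2^{k-1} < p(\phi)$. Iterating the doubling inequality $k$ times on the probability characteristic function $\widehat{\mu}^{1/n}$ produces
\[
n(1-\text{Re}\,\widehat{\mu}^{1/n}(\phi)) \leq 4^{k}\cdot n\bigl(1-\text{Re}\,\widehat{\mu}^{1/n}(\phi/2^{k})\bigr) \leq 4^{k}\cdot\tfrac{3}{2}\epsilon < 6\epsilon\,p(\phi)^{2} \leq 8\epsilon(1+p(\phi)^{2}),
\]
using $4^{k} = 4\cdot 4^{k-1} < 4p(\phi)^{2}$. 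For $p(\phi)\leq 1$ the $B_{p}(1)$-estimate already yields $\tfrac{3}{2}\epsilon \leq 8\epsilon(1+p(\phi)^{2})$, completing the proof.

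The principal obstacle is securing the initial bound on $B_{p}(1)$ with a constant at most $2$ (any such constant produces $8\epsilon(1+p^{2})$ after doubling). This is where infinite divisibility is essential: only because $\widehat{\mu}$ admits a continuous logarithm on all of $\Phi$ can one control $|1-\widehat{\mu}^{1/n}|$ by $O(\epsilon/n)$, so that the term $n|1-\widehat{\mu}^{1/n}|^{2}$ contributes only $O(\epsilon)$ rather than $O(1)$ and the imaginary part of $\widehat{\mu}^{1/n}$ is absorbed into the Bernoulli bound.
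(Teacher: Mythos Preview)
The paper does not actually prove this lemma: it states it as a mild extension of Fernique's result for $\mathscr{D}'$ and refers the reader to \cite{Fernique:1967}, Corollaire~2. Your argument supplies a correct, self-contained proof along what are essentially Fernique's lines---a sharp local estimate on $B_{p}(1)$ obtained from the logarithm of $\widehat{\mu}$, followed by dyadic extension via the elementary inequality $1-\cos 2\alpha \le 4(1-\cos\alpha)$ integrated against the $n$-th root measure. The numerical bookkeeping (Bernoulli, $|e^{w}-1|\le|w|e^{|w|}$, and $4^{k}<4p(\phi)^{2}$) is clean and gives the stated constant $8$.

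One point deserves a sentence of justification in the write-up: the assertion that $\widehat{\mu}$ admits a continuous logarithm $\psi$ on $\Phi$ with $\widehat{\mu}^{1/n}=e^{\psi/n}$. In this generality the cleanest route is to work ray by ray: for fixed $\phi$, the map $t\mapsto \widehat{\mu}(t\phi)$ is the characteristic function of the one-dimensional infinitely divisible law $\mu\circ\pi_{\phi}^{-1}$, so the classical real-line theory gives a continuous logarithm along the segment $[0,1]\ni t\mapsto t\phi$, and the $n$-th root $\widehat{\mu_{n}}(t\phi)$ coincides with $\exp(\psi(t\phi)/n)$ by the standard continuity/root-of-unity argument. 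For $\phi\in B_{p}(1)$ the whole segment stays in $B_{p}(1)$, so $\widehat{\mu}$ stays in $D(1,\epsilon)$ and this branch agrees with the principal logarithm, which is exactly what your bound $|\psi(\phi)|\le 4\epsilon/3$ uses. With that remark added, your proof is complete.
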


Another result that will be of great importance for our forthcoming arguments is the following version of Minlos' lemma due to Fernique. With some modifications, its proof can be carried out as the proof of Lemme 2 in \cite{Fernique:1967-2} for bounded measures on $\mathscr{D'}$.  

\begin{lemm}[Minlos' lemma]\label{minlosLemma}
Let $\mu \in \goth{M}^{b}_{R}(\Phi'_{\beta})$. Suppose that there exists $\epsilon >0$ and a continuous Hilbertian seminorm $p$ on $\Phi$ such that 
$$ 1 - \emph{Re} \, \widehat{\mu}(\phi) 	\leq \epsilon (1+p(\phi)^{2}), \quad \forall \, \phi \in \Phi.$$
If $q$ is any continuous Hilbertian seminorm on $\Phi$, satisfying $p \leq q$ and such that $i_{p,q}$ is Hilbert-Schmidt, then we have that 
$$ \int_{\Phi'} (q'(f)^{2} \wedge 1) \mu (df) \leq \epsilon \left( 1+ \norm{i_{p,q}}^{2}_{\mathcal{L}_{2}(\Phi_{q},\Phi_{p})} \right) < \infty. $$
\end{lemm}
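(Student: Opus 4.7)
My plan is to adapt the classical Minlos--type smoothing argument: one integrates the pointwise bound on $1-\mathrm{Re}\,\widehat{\mu}$ against a Gaussian measure built from an orthonormal basis of $\Phi_{q}$, then transfers the resulting estimate to $\mu$ itself via Fubini. Fix a complete orthonormal basis $\{e_{k}\}_{k\geq 1}$ of $\Phi_{q}$, with representatives chosen in $\Phi$ modulo $\ker q$; since $i_{p,q}$ is Hilbert--Schmidt, $\sum_{k\geq 1}p(e_{k})^{2}=\norm{i_{p,q}}_{\mathcal{L}_{2}(\Phi_{q},\Phi_{p})}^{2}<\infty$.

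On an auxiliary probability space carrying i.i.d.\ standard real Gaussians $(\xi_{k})_{k\geq 1}$, set $X_{N}\defeq\sum_{k=1}^{N}\xi_{k}e_{k}\in\Phi$ (a \emph{finite} sum, so genuinely in $\Phi$). Applying the hypothesis pointwise in $\omega$ and taking expectation yields
$$\Exp\bigl[1-\mathrm{Re}\,\widehat{\mu}(X_{N})\bigr]\leq \epsilon\Bigl(1+\sum_{k=1}^{N}p(e_{k})^{2}\Bigr)\leq \epsilon\bigl(1+\norm{i_{p,q}}_{\mathcal{L}_{2}(\Phi_{q},\Phi_{p})}^{2}\bigr).$$
Since $\mu$ is a bounded measure and $|1-\cos f[X_{N}]|\leq 2$, Fubini applies, and the fact that $f[X_{N}]$ is centered Gaussian with variance $\sum_{k=1}^{N}f[e_{k}]^{2}$ gives
$$\Exp\bigl[1-\mathrm{Re}\,\widehat{\mu}(X_{N})\bigr]=\int_{\Phi'}\Bigl(1-\exp\bigl(-\tfrac{1}{2}\sum_{k=1}^{N}f[e_{k}]^{2}\bigr)\Bigr)\mu(df).$$

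Letting $N\to\infty$, the partial sums increase to $q'(f)^{2}\in[0,\infty]$ (taking the value $+\infty$ precisely when $f\notin\Phi'_{q}$, in which case the exponential vanishes), so monotone convergence delivers
$$\int_{\Phi'}\bigl(1-e^{-q'(f)^{2}/2}\bigr)\mu(df)\leq \epsilon\bigl(1+\norm{i_{p,q}}_{\mathcal{L}_{2}(\Phi_{q},\Phi_{p})}^{2}\bigr).$$
To finish, one invokes the elementary inequality $t\wedge 1\leq C(1-e^{-t/2})$ valid for all $t\geq 0$ with an absolute constant $C$ (e.g.\ $C=3$), converting the exponential integrand into $q'(f)^{2}\wedge 1$. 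The main technical obstacle is producing the constant exactly as displayed in the statement: the Gaussian smoothing naturally yields the estimate up to such an absolute factor, which must then be matched to the stated form either by rescaling the auxiliary Gaussians or by absorbing it into the final inequality; all the probabilistic ingredients---finiteness of $\mu$, applicability of Fubini, and monotone passage to the limit where $q'(f)$ may be infinite---pose no difficulty and proceed verbatim as in the classical Gelfand--Vilenkin/Fernique version.
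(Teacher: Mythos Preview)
The paper does not supply its own proof of this lemma; it simply points to Fernique \cite{Fernique:1967-2} and says the argument there carries over with modifications. Your Gaussian-smoothing approach is exactly the classical Minlos--Sazonov technique, and the backbone of the argument (apply the hypothesis to random finite combinations $\sum_{k\leq N}\xi_{k}e_{k}$, use Fubini against the bounded measure $\mu$, compute the Gaussian average, pass to the limit) is correct and is presumably what Fernique does as well.

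There is, however, a genuine gap beyond the constant you flag. Your limiting identity $\sum_{k}f[e_{k}]^{2}=q'(f)^{2}$ holds only for $f\in\Phi'_{q}$; it can fail badly otherwise. If $f$ is nonzero on $\ker q$ (which is possible since $q$ is merely a seminorm) then $q'(f)=\infty$, yet your chosen representatives $e_{k}$ are orthonormal in $\Phi_{q}$ and carry no information about $\ker q$, so $\sum_{k}f[e_{k}]^{2}$ can be finite---even zero. A two-dimensional example: $\Phi=\R^{2}$, $q(x,y)=|x|$, $e_{1}=(1,0)$, $f=(0,1)$; then $q'(f)=\infty$ but $f[e_{1}]=0$. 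In that situation $1-e^{-\sum f[e_{k}]^{2}/2}=0$ while $q'(f)^{2}\wedge 1=1$, so your integrand comparison $(q'(f)^{2}\wedge 1)\leq C(1-e^{-\sum f[e_{k}]^{2}/2})$ breaks down on the set $\{q'(f)=\infty\}$. To repair this you must bound $\mu(\{q'(f)=\infty\})$ separately---for instance, for any $\phi\in\ker q$ the hypothesis gives $\int(1-\cos f[\phi])\,\mu(df)\leq\epsilon$, and averaging over dilates of $\phi$ yields $\mu(\{f:f[\phi]\neq 0\})\leq\epsilon$; a further argument (or an assumption that $q$ is a norm, as in Fernique's $\mathscr{D}'$ setting where the relevant seminorms are norms on the test-function space) is then needed to cover all of $\{q'(f)=\infty\}$.

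On the constant: your own caveat is warranted. The Gaussian average naturally produces $1-e^{-q'(f)^{2}/2}$, and the best universal comparison with $q'(f)^{2}\wedge 1$ carries a factor $(1-e^{-1/2})^{-1}\approx 2.54$. Rescaling the Gaussians does not remove it (sending $\lambda\to\infty$ or $\lambda\to 0$ in $X_{N}=\lambda\sum\xi_{k}e_{k}$ degenerates the bound rather than sharpening it). For every application in this paper only finiteness matters, so the discrepancy is harmless in context, but as written you have proved the lemma with an absolute constant in front of $\epsilon$, not the displayed constant $1$.
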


We are ready for the main result of this section:

\begin{theo} \label{associatedMeasureIsLevyMeasure}
The measure $\nu$ of the $\Phi'_{\beta}$-valued L\'{e}vy process $L$ is a L\'{e}vy measure on $\Phi'_{\beta}$.  
\end{theo}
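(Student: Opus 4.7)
The plan is to verify the three conditions of Definition~\ref{defiLevyMeasureDualNuclearSpace}. Conditions (1) and (2) are immediate: $\nu(\{0\})=0$ by the very construction of $\nu$ from the jumps $\Delta L$, and for every neighborhood $U$ of $0$ in $\Phi'_{\beta}$ the set $U^{c}$ is bounded below, hence in $\mathcal{A}$, so $\nu(U^{c})<\infty$ and $\restr{\nu}{U^{c}}\in\goth{M}^{b}_{R}(\Phi'_{\beta})$ by Lemma~\ref{lemmRestrLevyMeasureIsRadon}. The real work is in condition~(3), where I would combine infinite divisibility with Fernique's and Minlos' lemmas.

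Since $L_{1}$ is a regular random variable (Assumption~\ref{generalAssumptionsLevyProcess}(2) and Theorem~\ref{theoCharacterizationRegularRV}), the map $\phi\mapsto L_{1}[\phi]$ is continuous into $L^{0}\ProbSpace$ and so $\widehat{\mu}_{L_{1}}$ is continuous at the origin; nuclearity then produces a continuous Hilbertian semi-norm $p$ on $\Phi$ and some $\epsilon\in \, ]0,\tfrac{1}{4}]$ with $|1-\widehat{\mu}_{L_{1}}(\phi)|<\epsilon$ for $p(\phi)\le 1$. Theorem~\ref{theoLevyDefinesConvSemig} gives that $\mu_{L_{1}}$ is infinitely divisible, and the convolution semigroup property forces $\mu_{L_{1/n}}$ to be its unique continuous $n$-th root. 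Fernique's Lemma~\ref{ferniqueLemma} therefore yields
\begin{equation*}
n\bigl(1-\mbox{Re}\,\widehat{\mu}_{L_{1/n}}(\phi)\bigr)\le 8\epsilon\bigl(1+p(\phi)^{2}\bigr), \qquad \forall\,\phi\in\Phi,\ \forall\,n\in\N.
\end{equation*}

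For $A\in\mathcal{A}$ the decomposition $L_{1/n}=(L_{1/n}-J_{1/n}(A))+J_{1/n}(A)$ into independent summands (Theorem~\ref{independenceLevyAndItsPoissonIntegrals}) combined with the Poisson characteristic function~\eqref{charactFunctionPoissonIntegral} factor $\widehat{\mu}_{L_{1/n}}$; using $\mbox{Re}\,z\le|z|\le 1$ one gets $\mbox{Re}\,\widehat{\mu}_{L_{1/n}}(\phi)\le \exp\bigl(-\tfrac{1}{n}\int_{A}(1-\cos f[\phi])\nu(df)\bigr)$. Plugging this into the Fernique bound and sending $n\to\infty$ via $n(1-e^{-x/n})\to x$ yields the key estimate
\begin{equation*}
\int_{A}(1-\cos f[\phi])\,\nu(df)\le 8\epsilon\bigl(1+p(\phi)^{2}\bigr), \qquad \forall\,\phi\in\Phi,\ \forall\,A\in\mathcal{A}.
\end{equation*}
For $A\in\mathcal{A}$ with $\nu(A)>0$, the Radon probability measure $\tilde{\mu}_{A}\defeq\nu|_{A}/\nu(A)$ satisfies $1-\mbox{Re}\,\widehat{\tilde{\mu}}_{A}(\phi)\le \tfrac{8\epsilon}{\nu(A)}(1+p(\phi)^{2})$. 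Choosing a continuous Hilbertian semi-norm $q$ on $\Phi$ with $p\le q$ and $i_{p,q}$ Hilbert-Schmidt (possible by nuclearity), Minlos' Lemma~\ref{minlosLemma} applied to $\tilde{\mu}_{A}$, after multiplication by $\nu(A)$, gives the crucial uniform-in-$A$ bound
\begin{equation*}
\int_{A}\bigl(q'(f)^{2}\wedge 1\bigr)\nu(df)\le 8\epsilon\bigl(1+\norm{i_{p,q}}^{2}_{\mathcal{L}_{2}(\Phi_{q},\Phi_{p})}\bigr)=:C,
\end{equation*}
the cancellation of $\nu(A)$ being the whole point of the normalization.

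Setting $\rho=q$, once this uniform bound is lifted to $\int_{\Phi'}(q'(f)^{2}\wedge 1)\nu(df)\le C$ one immediately obtains $\int_{B_{\rho'}(1)}\rho'(f)^{2}\nu(df)\le C$ and $\nu(B_{\rho'}(1)^{c})\le C$, while the Radonness of $\restr{\nu}{B_{\rho'}(1)^{c}}$ comes from the $\theta_{L}$-regularity of $\nu$ (Lemma~\ref{lemmRestrLevyMeasureIsRadon}) together with the tightness of finite Borel measures on the separable Hilbert pieces $\Phi'_{p_{n}}$, transferred to $\Phi'_{\beta}$ through the continuous inclusions $\Phi'_{p_{n}}\hookrightarrow\Phi'_{\beta}$. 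The main technical obstacle I expect is this lifting step, since $\Phi'_{\beta}$ need not be metrizable at $0$ and there is no evident countable exhaustion of $\Phi'\setminus\{0\}$ by bounded-below sets. The way I would get around it is to exploit the $\theta_{L}$-regularity to restrict attention to $\bigcup_{n}\Phi'_{p_{n}}$, where nuclearity renders the inclusions $\Phi'_{p_{n}}\hookrightarrow\Phi'_{\beta}$ compact (as adjoints of Hilbert-Schmidt embeddings) and thereby supplies the $\Phi'_{\beta}$-bounded-below subsets needed for the monotone-convergence argument.
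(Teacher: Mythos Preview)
Your proof is correct and follows the same architecture as the paper's: conditions (1) and (2) via Lemma~\ref{lemmRestrLevyMeasureIsRadon}, then for (3) the factorization from Theorem~\ref{independenceLevyAndItsPoissonIntegrals} combined with Lemmas~\ref{ferniqueLemma} and~\ref{minlosLemma} to obtain a uniform-in-$A$ bound on $\int_A(q'(f)^2\wedge 1)\,\nu(df)$, which is then lifted to all of $\Phi'$ via $\theta_L$-regularity. The only differences are tactical: you apply Fernique's lemma directly to $\mu_{L_1}$ and extract the $(1-\cos)$-bound by the limit $n(1-e^{-x/n})\to x$, whereas the paper applies it to the Poisson measures $|\widehat{e(\nu_A)}|^2$ (using $|\widehat{e(\nu_A)}|^2\le |\widehat{\mu_{L_1}}|^2$) and the inequality $t/4\le 1-e^{-t}$ on $[0,1]$; and for the lifting step the paper takes the slightly cleaner route of regarding $\nu$ as a Borel measure on the separable metrizable subspace $\Phi'_{\theta_L}$ and invoking $\nu=\sup_{A\in\mathcal{A}_{\goth{B}}}\nu_A$ setwise there.
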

\begin{prf}
By definition $\nu (\{0 \})=0$. Now, because for every neighborhood of zero $U \subseteq \Phi'_{\beta}$, we have that $U^{c} \in \mathcal{A}$, then $\restr{\nu}{U^{c}}  \in  \goth{M}_{R}^{b}(\Phi'_{\beta})$ (Lemma \ref{lemmRestrLevyMeasureIsRadon}). Therefore, it only remains to show that there exits a continuous Hilbertian semi-norm $\rho$ on $\Phi$ such that $\nu$ satisfies \eqref{equDefLevyMeasure} with $\lambda$ replaced by $\nu$. This is because \eqref{equDefLevyMeasure} implies that $\nu(B_{\rho'}(1)^{c})< \infty$ and hence from Lemma \ref{lemmRestrLevyMeasureIsRadon} we obtain that $\restr{\nu}{B_{\rho'}(1)^{c}} \in \goth{M}^{b}_{R}(\Phi'_{\beta})$.  
For our proof, we will benefit from some arguments of the proof of Lemma 2.1 in \cite{Dettweiler:1976}.  

Let $\goth{B}$ be a local base of closed neighborhoods of zero for $\Phi'_{\beta}$ and let $\mathcal{A}_{\goth{B}}=\{ V^{c}: V \in \goth{B}\}$. Because $\Phi'_{\beta}$ is Hausdorff, it follows that $\Phi'_{\beta} \setminus \{0\}= \bigcup_{A \in \mathcal{A}_{\goth{B}}} A$.  

For each $A \in \mathcal{A}_{\goth{B}}$, let $\nu_{A} \defeq \restr{\nu}{A}$. As each $A \in \mathcal{A}_{\goth{B}}$ satisfies $A \in \mathcal{A}$, we have that $\nu_{A} \in \goth{M}_{R}^{b}(\Phi'_{\beta})$ for all $A \in \mathcal{A}_{\goth{B}}$ (Lemma \ref{lemmRestrLevyMeasureIsRadon}). Now consider on $\mathcal{A}_{\goth{B}}$ the order relationship given by the inclusion of sets. Then $\{ \nu_{A} \}_{A \in \mathcal{A}_{\goth{B}}}$ is an increasing net (setwise) in $\goth{M}_{R}^{b}(\Phi'_{\beta})$. Moreover because $\mathcal{A}_{\goth{B}}$ is an increasing net of open subsets that satisfies $\Phi'_{\beta} \setminus \{ 0 \} = \bigcup_{A \in \mathcal{A}_{\goth{B}}} A$, and $\nu$ can be reduced to be a Borel measure on the (separable and metrizable) subspace $\Phi'_{\theta_{L}}$ of $\Phi'_{\beta}$ (this follows from Assumption \ref{generalAssumptionsLevyProcess}(2) and \eqref{intensityMeasurePoissonRandomMeasure}), it follows that $\nu = \sup_{A \in \mathcal{A}_{\goth{B}}} \nu_{A}$ (setwise) (see \cite{BogachevMT}, Propositions 7.2.2 and 7.2.5).  

On the other hand,  note that from Theorem \ref{independenceLevyAndItsPoissonIntegrals}, for each $A \in \mathcal{A}_{\goth{B}}$, the processes $L-J(A)$ and $J(A)$ are independent. Therefore we have 
\begin{equation} \label{decompChartFuncLtForPoissonInteg}
\widehat{\mu}_{L_{t}}(\phi)=\widehat{\mu}_{L_{t}-J_{t}(A)}(\phi) \cdot \widehat{ \mu}_{J_{t}(A)}(\phi), \quad \forall \, A \in \mathcal{A}_{\goth{B}}, \, t \geq 0, \phi \in \Phi. 
\end{equation}
Now for fixed  $A \in \mathcal{A}_{\goth{B}}$, $t \geq 0$, $\phi \in \Phi$, because $\abs{\widehat{\mu}_{L_{t}-J_{t}(A)}(\phi)} \leq 1$ it follows from \eqref{decompChartFuncLtForPoissonInteg} that 
$\abs{\widehat{\mu}_{L_{t}}(\phi)}^{2} \leq \abs{\widehat{ \mu}_{J_{t}(A)}(\phi)}^{2}  \leq 1$. Therefore we have that 
\begin{equation} \label{inequaForEquiContPoissInteg1}
1-\abs{\widehat{ \mu}_{J_{t}(A)}(\phi)}^{2} \leq 1- \abs{\widehat{\mu}_{L_{t}}(\phi)}^{2}, \quad \forall \, A \in \mathcal{A}_{\goth{B}}, \, t \geq 0, \phi \in \Phi. 
\end{equation}
On the other hand, note that if we take $t =1$ in \eqref{distributionPoissonIntegral} then we have $\mu_{J_{1}(A)}= e(\nu_{A})$, for all $A \in \mathcal{A}$. Therefore
it follows from \eqref{inequaForEquiContPoissInteg1} that 
\begin{equation} \label{inequaForEquiContPoissInteg2}
1-\abs{\widehat{ e(\nu_{A})}(\phi)}^{2} \leq 1- \abs{\widehat{\mu}_{L_{1}}(\phi)}^{2}, \quad \forall \, A \in \mathcal{A}_{\goth{B}}, \, \phi \in \Phi. 
\end{equation}
Now because $L_{1}$ is a regular random variable, it follows from Theorem \ref{theoCharacterizationRegularRV} that the map $\phi \mapsto L_{t}[\phi]$ from $\Phi$ into $L^{0}\ProbSpace$ is continuous. But this in turn implies that $\widehat{\mu_{L_{1}}}$ and hence $\abs{\widehat{\mu_{L_{1}}}}^{2}$ is continuous at zero. Therefore there exists a continuous Hilbertian semi-norm $p$ on $\Phi$ such that 
\begin{equation} \label{contCharFuncL1}
\forall \, \phi \in \Phi, \quad p(\phi) \leq 1 \quad \Rightarrow \quad 1-\abs{\widehat{\mu}_{L_{1}}(\phi)}^{2} < \frac{1}{4}, \quad \forall \, \phi \in \Phi.
\end{equation}
Hence it follows from \eqref{inequaForEquiContPoissInteg2} and \eqref{contCharFuncL1} that 
\begin{equation} \label{inequaForEquiContPoissInteg3}
\forall \, A \in \mathcal{A}_{\goth{B}}, \,  \phi \in \Phi, \quad p(\phi) \leq 1 \quad \Rightarrow \quad 1-\abs{\widehat{ e(\nu_{A})}(\phi)}^{2} < \frac{1}{4}. 
\end{equation}
Let $\phi \in \Phi$. For every $A \in \mathcal{A}_{\goth{B}}$ and every $n \in \N$, 
from \eqref{charactFunctPoissonMeas} for the measure $\nu_{A}$,  we have  
$$ - \log \abs{\widehat{ e(\nu_{A})}(\phi)}^{2/n} = \frac{2}{n} \int_{\Phi'} (1- \cos f[x]) \nu_{A}(df) \leq \frac{4}{n} \nu_{A}(\Phi'_{\beta}) < \infty. $$
So for fixed $A \in \mathcal{A}_{\goth{B}}$, by choosing $n \in \N$ sufficiently large such that $ \nu_{A}(\Phi'_{\beta}) \leq \frac{n}{4}$, and by using the elementary inequality $\frac{t}{4} \leq 1-e^{-t}$ that is valid for $t \in [0,1]$, by taking $t= - \log \abs{\widehat{ e(\nu_{A})}(\phi)}^{2/n}$ we obtain that 
\begin{align} 
1- \mbox{Re} \, \widehat{\nu_{A}}(\phi) \label{inequaForEquiContPoissInteg4}
& =  \int_{\Phi'} (1- \cos f[x]) \nu_{A}(df)  \\
& =  - \frac{n}{2} \log \abs{\widehat{ e(\nu_{A})}(\phi)}^{2/n} 
\leq 2 n \cdot  \left( 1- \abs{\widehat{ e(\nu_{A})}(\phi)}^{2/n} \right). \nonumber 
\end{align} 
On the other hand, from \eqref{inequaForEquiContPoissInteg3}
and Lemma \ref{ferniqueLemma} (with $\epsilon =\frac{1}{4}$) we have that 
\begin{equation} \label{inequaForEquiContPoissInteg5}
 n \cdot \left( 1- \abs{\widehat{ e(\nu_{A})}(\phi)}^{2/n} \right) \leq 2  (1+p(\phi)^{2}).
\end{equation}
Then \eqref{inequaForEquiContPoissInteg4} and \eqref{inequaForEquiContPoissInteg5} show that 
\begin{equation}
1- \mbox{Re} \, \widehat{\nu_{A}}(\phi) < 4 (1+p(\phi)^{2}), \quad \forall \, A \in \mathcal{A}_{\goth{B}}, \, \phi \in \Phi. 
\end{equation}
But from Lemma \ref{minlosLemma}, if $\rho$ is any continuous Hilbertian seminorm on $\Phi$, satisfying $p \leq \rho$ and such that $i_{p,\rho}$ is Hilbert-Schmidt, then the above arguments imply that  
$$ \int_{\Phi'} (\rho'(f)^{2} \wedge 1) \nu (df) = \sup_{A \in \mathcal{A}_{\goth{B}}} \int_{\Phi'} (\rho'(f)^{2} \wedge 1) \nu_{A} (df) \leq 4 \left( 1+ \norm{i_{p,\rho}}^{2}_{\mathcal{L}_{2}(\Phi_{\rho},\Phi_{p})} \right) < \infty. $$
Hence $\nu$ is a L\'{e}vy measure. 
\end{prf}

\begin{defi}
From now on, the measure $\nu$ of the L\'{e}vy process $L$ will be called the \emph{L\'{e}vy measure} of $L$. 
\end{defi}

\subsection{The L\'{e}vy-It\^{o} Decomposition.}\label{subsectionLID}

Our main objective for this section is to prove Theorem \ref{levyItoDecompositionTheorem}, which is the L\'{e}vy-It\^{o}  decomposition. We will need the following properties of the space of martingales taking values in the Hilbert space $\Phi'_{q}$. 

For a continuous Hilbertian semi-norm $q$ on $\Phi$ we denote by $\mathcal{M}^{2}(\Phi'_{q})$ and $\mathcal{M}^{2}_{T}(\Phi'_{q})$ the linear spaces of (equivalent clases of) $\Phi'_{q}$-valued mean-zero, square integrable, c\`{a}dl\`{a}g, $\{ \mathcal{F}_{t} \}$-adapted martingales defined respectively on $[0,\infty)$ and on $[0,T]$ (with $T>0$). 

The space $\mathcal{M}^{2}_{T}(\Phi'_{q})$, is a Banach space equipped with the norm $\norm{ \cdot }_{\mathcal{M}^{2}_{T}(\Phi'_{q})}$ defined by %(see \cite{DaPratoZabczyk}, Proposition 3.9, p.79)
$$ \norm{ M }_{\mathcal{M}^{2}_{T}(\Phi'_{q})}= \left( \Exp \sup_{t \in [0,T]} q'(M_{t})^{2} \right)^{1/2}, \quad \forall \, M \in \mathcal{M}^{2}_{T}(\Phi'_{q}). $$ 

For every $T>0$, there exists a canonical inclusion $j_{T}$ of the space $\mathcal{M}^{2}(\Phi'_{q})$ into the space $\mathcal{M}^{2}_{T}(\Phi'_{q})$. Therefore we can equip $\mathcal{M}^{2}(\Phi'_{q})$ with the projective limit topology determined by the projective system $\{ (\mathcal{M}^{2}_{K}(\Phi'_{q}), j_{K}): K \in \N\}$. Then equipped with this topology, $\mathcal{M}^{2}(\Phi'_{q})$ is a Fr\'{e}chet space and a family of semi-norms generating its topology is $\{ \norm{ j_{K} (\cdot)}_{\mathcal{M}^{2}_{K}(\Phi'_{q})} : K \in \N\}$. In particular, convergence in $\mathcal{M}^{2}(\Phi'_{q})$ is then equivalent to convergence in the space $L^{2} \left( \Omega, \mathcal{F}, \Prob; \Phi'_{q} \right)$ uniformly on compact intervals of $[0,\infty)$.

Now we start with our preparations for the proof of Theorem \ref{levyItoDecompositionTheorem}. Let $\nu$ be the L\'{e}vy measure of $L$. According to Definition \ref{defiLevyMeasureDualNuclearSpace} and Theorem \ref{associatedMeasureIsLevyMeasure}, there exists a continuous Hilbertian semi-norm $\rho$ on $\Phi$ such that 
\begin{equation} \label{integrabilityLevyMeasureLevyProcess}
\int_{B_{\rho'}(1)} \rho'(f)^{2} \nu (df) < \infty,  \quad \mbox{and} \quad  \restr{\nu}{B_{\rho'}(1)^{c}} \in  \goth{M}_{R}^{b}(\Phi'_{\beta}), 
\end{equation}
where $B_{\rho'}(1) \defeq B_{\rho}(1)^{0}= \{f \in \Phi'_{\beta}: \rho'(f) \leq 1\}$. As $B_{\rho}(1)$ is a convex, balanced, neighborhood of zero, then its polar $B_{\rho'}(1)$ is a bounded, closed, convex, balanced subset of $\Phi'_{\beta}$.

\begin{theo} \label{existenceMartingaleLevyItoDecomp}
There exists a $\Phi'_{\beta}$-valued mean-zero, square integrable, c\`{a}dl\`{a}g L\'{e}vy process $M=\{ M_{t} \}_{t \geq 0}$ such that for all $t\geq 0$, it has characteristic function given by
\begin{equation} \label{charactFunctionMartingaleLevyItoDecomp}
 \Exp \left( e^{ i M_{t}[\phi] } \right) 
 = \exp \left\{ t \int_{B_{\rho'}(1)} \left( e^{i f[\phi]}-1-if[\phi] \right) \nu (d f) \right\}, \quad \forall \, \phi \in \Phi, 
\end{equation}
and second moments given by
\begin{equation} \label{secondMomentMartingaleLID}
\Exp \left( \abs{M_{t}[\phi]}^{2}\right) = t \int_{B_{\rho'}(1)} \abs{f[\phi]}^{2} \nu (df), \quad \forall \, \phi \in \Phi. 
\end{equation}
Moreover there exists a continuous Hilbertian semi-norm $q$ on $\Phi$, $\rho \leq q$, such that $i_{\rho,q}$ is Hilbert-Schmidt and for which $M$ is a $\Phi'_{q}$-valued mean-zero, square integrable, c\`{a}dl\`{a}g L\'{e}vy process with second moments given by
\begin{equation} \label{secondMomentHilbertSpaceMartingaleLID}
\Exp \left( q'(M_{t})^{2}\right) = t \int_{B_{\rho'}(1)} q'(f)^{2} \nu (df), \quad \forall \, t \geq 0. 
\end{equation}
\end{theo}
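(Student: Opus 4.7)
The plan is to construct $M$ as an $L^2$-limit of compensated Poisson integrals over an exhausting sequence of sets inside $B_{\rho'}(1)$, and to do this in a Hilbert space $\Phi'_q$ chosen using the nuclearity of $\Phi$. Since $\nu(B_{\rho'}(1))$ may be infinite, one cannot appeal to Proposition \ref{propPropertiesPoissonIntegrals} directly.

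First, using that $\Phi$ is nuclear, I will pick a continuous Hilbertian semi-norm $q$ on $\Phi$ with $\rho \leq q$ and $i_{\rho,q} \in \mathcal{L}_2(\Phi_q,\Phi_\rho)$. If $\{e_k\}_{k\in\N}$ is an orthonormal basis of $\Phi_q$, then $\sum_k \rho(e_k)^2 = \norm{i_{\rho,q}}_{\mathcal{L}_2(\Phi_q,\Phi_\rho)}^2 < \infty$. For any $f\in B_{\rho'}(1)$ viewed in $\Phi'_\rho$, Parseval in $\Phi'_q$ gives
\begin{equation*}
q'(f)^2 \;=\; \sum_k \abs{f[e_k]}^2 \;\leq\; \rho'(f)^2 \sum_k \rho(e_k)^2 \;=\; \rho'(f)^2\, \norm{i_{\rho,q}}_{\mathcal{L}_2(\Phi_q,\Phi_\rho)}^2,
\end{equation*}
so by \eqref{integrabilityLevyMeasureLevyProcess} we have $\int_{B_{\rho'}(1)} q'(f)^2\, \nu(df) < \infty$.

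Second, set $A_n \defeq \{ f \in B_{\rho'}(1) : \rho'(f) > 1/n \}$. Then $A_n \in \mathcal{A}$, so $\nu(A_n) < \infty$, and $A_n \uparrow B_{\rho'}(1) \setminus \{0\}$. Because $\int_{A_n} \abs{f[\phi]}\, \nu(df) \leq \rho(\phi) \nu(A_n)<\infty$, Section \ref{subsectionPRMPI} lets me define the compensated Poisson integral $M^n \defeq \widetilde J(A_n) = \{\widetilde J_t(A_n)\}_{t\geq 0}$, which is a $\Phi'_\beta$-valued zero-mean, square integrable, càdlàg L\'evy process, and by Theorem \ref{independencePoissonIntegrals} the increments $\widetilde J(A_{n+k})-\widetilde J(A_n)=\widetilde J(A_{n+k}\setminus A_n)$ are independent of $M^n$. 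Using \eqref{secondMomentCompensatedPoissonIntegral} applied to $\phi=e_k$ and summing on $k$ (Fubini, positivity), I get for $n\leq m$ that for every $\phi\in \Phi$
\begin{equation*}
\Exp \abs{M^m_t[\phi]-M^n_t[\phi]}^2 = t \int_{A_m\setminus A_n} \abs{f[\phi]}^2\, \nu(df), \qquad \Exp\, q'(M^m_t-M^n_t)^2 = t \int_{A_m\setminus A_n} q'(f)^2\, \nu(df).
\end{equation*}

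Third, I will show that $\{M^n\}$ is Cauchy in $\mathcal{M}^2(\Phi'_q)$. For every $T>0$, Doob's maximal inequality applied to the real-valued submartingale $t\mapsto q'(M^m_t-M^n_t)^2$ (or the $\Phi'_q$-valued martingale $M^m-M^n$) combined with the displayed identity yields
\begin{equation*}
\Exp \sup_{t\in[0,T]} q'(M^m_t-M^n_t)^2 \;\leq\; 4 T \int_{A_m\setminus A_n} q'(f)^2\, \nu(df) \;\xrightarrow[n,m\to\infty]{}\; 0,
\end{equation*}
by dominated convergence using the bound on $q'$ above and $\int_{B_{\rho'}(1)} q'(f)^2\nu(df)<\infty$. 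As $\mathcal{M}^2(\Phi'_q)$ is complete, there is $M \in \mathcal{M}^2(\Phi'_q)$ with $M^n \to M$ uniformly on compacts in $L^2(\Omega; \Phi'_q)$; since $i_q'\colon \Phi'_q \hookrightarrow \Phi'_\beta$ is continuous, $M$ is also a $\Phi'_\beta$-valued càdlàg process. Independence and stationarity of increments pass to $L^2$-limits (Proposition \ref{independenceStochProcessDualSpace} plus characteristic-function convergence), as does $M_0=0$ and stochastic continuity, so $M$ is a $\Phi'_\beta$-valued zero-mean, square integrable, càdlàg L\'evy process. Finally, \eqref{charactFunctionCompensatedPoissonIntegral} for $M^n$ combined with dominated convergence (using $\abs{e^{if[\phi]}-1-if[\phi]}\leq \tfrac12 f[\phi]^2\leq \tfrac12 \rho(\phi)^2 \rho'(f)^2$ on $B_{\rho'}(1)$) yields \eqref{charactFunctionMartingaleLevyItoDecomp}, and passing to the limit in the $L^2$-identities above gives \eqref{secondMomentMartingaleLID} and \eqref{secondMomentHilbertSpaceMartingaleLID}.

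The only delicate step is choosing $q$ so that $\int_{B_{\rho'}(1)} q'(f)^2 \nu(df) < \infty$; this is precisely where nuclearity is used (to ensure $i_{\rho,q}$ can be made Hilbert-Schmidt), and it is what converts the scalar square integrability \eqref{integrabilityLevyMeasureLevyProcess} of the L\'evy measure into a genuine Hilbert-space $L^2$-bound on the approximating martingales, securing the Cauchy property in $\mathcal{M}^2(\Phi'_q)$.
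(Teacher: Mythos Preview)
Your proposal follows essentially the same strategy as the paper: approximate $M$ by compensated Poisson integrals over sets exhausting $B_{\rho'}(1)\setminus\{0\}$, show the approximants form a Cauchy family in $\mathcal{M}^2(\Phi'_q)$ via Doob's inequality, and pass to the limit. The paper works with a net indexed by sets of the form $V\cap B_{\rho'}(1)$ with $V^c$ ranging over a base of closed neighborhoods of $0$ in $\Phi'_\beta$, whereas you use the sequence $A_n=\{f\in B_{\rho'}(1):\rho'(f)>1/n\}$; your choice is simpler and spares you the extraction of a subsequence at the end.

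One point needs a small repair. You assert $A_n\in\mathcal{A}$, i.e.\ $0\notin\overline{A_n}$ in $\Phi'_\beta$. This is not clear: $B_{\rho'}(1/n)$ is closed (since $\rho'$ is lower semicontinuous as a supremum of continuous maps $f\mapsto|f[\phi]|$), but it is the polar of the \emph{neighborhood} $B_\rho(n)$ of $0$ in $\Phi$, hence a bounded set in $\Phi'_\beta$ and not in general a neighborhood of $0$ there. So $A_n$ need not be bounded below. Fortunately this is inessential: the Poisson integrals $J(A)$ and $\widetilde J(A)$ in Section~\ref{subSectionPMPILP} are defined for any Borel $A$ with $\nu(A)<\infty$, and by Chebyshev and \eqref{integrabilityLevyMeasureLevyProcess} one has
\[
\nu(A_n)\;\le\; n^2\int_{B_{\rho'}(1)}\rho'(f)^2\,\nu(df)\;<\;\infty.
\]
With this correction your argument goes through as written.
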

\begin{prf}
Let $\goth{B}$ be a local base of closed neighborhoods of zero for $\Phi'_{\beta}$. Let $\mathcal{A}_{\rho'}$ denotes the collection of all sets of the form $V \cap B_{\rho'}(1)$, where $V^{c} \in \goth{B}$. It is clear that  $\mathcal{A}_{\rho'} \subseteq \mathcal{A}$ (see Section \ref{subSectionPMPILP}). Moreover as $\Phi'_{\beta} \setminus \{0\}= \bigcup_{V \in \goth{B}} V^{c}$ (this follows because $\Phi'_{\beta}$ is Hausdorff) then we have $B_{\rho'}(1) \setminus \{0\}= \bigcup_{A \in \mathcal{A}_{\rho'}} A$. 

Fix an arbitrary $A \in \mathcal{A}_{\rho'}$. It follows from \eqref{integrabilityLevyMeasureLevyProcess} that 
\begin{equation} \label{momentsPoissonIntegALevyItoBoundedSeminormRho}
\int_{A} \abs{f[\phi]}^{2} \nu (df) \leq \rho(\phi)^{2} \int_{A} \rho'(f)^{2} \nu (df) \leq \rho(\phi)^{2} \int_{B_{p'}(1)} \rho'(f)^{2} \nu (df) < \infty, \quad \forall \, \phi \in \Phi.  
\end{equation}
Therefore the compensated Poisson integral $\widetilde{J}(A)$ is a $\Phi'_{\beta}$-valued mean-zero, square integrable, c\`{a}dl\`{a}g, regular L\'{e}vy process with characteristic function given by \eqref{charactFunctionCompensatedPoissonIntegral} and second moments given by \eqref{secondMomentCompensatedPoissonIntegral} (with $\widetilde{J}^{(p)}(A)$ replaced by $\widetilde{J}(A)$ and $\nu_{p}$ by $\nu$). Moreover, for each $\phi \in \Phi$ the process $\widetilde{J}(A)[\phi]$ is a real-valued $\{ \mathcal{F}_{t} \}$-adapted martingale. From Doob's inequality, \eqref{secondMomentCompensatedPoissonIntegral} and \eqref{momentsPoissonIntegALevyItoBoundedSeminormRho}, for every $T>0$ we have
$$ \Exp \left( \sup_{t \in [0,T] } \abs{\widetilde{J}_{t}(A)[\phi]}^{2}\right) \leq 4 \Exp \left( \abs{\widetilde{J}_{T}(A)[\phi]}^{2}\right) \leq C(T) \rho(\phi)^{2}, \quad \forall \, \phi \in \Phi, $$
where $C(T)=4T \int_{B_{\rho'}(1)} \rho'(f)^{2} \nu(df) < \infty$. Then from Theorem \ref{theoCondLevyCadlagVersHilbSpace}, there exists a continuous Hilbertian semi-norm $q$ on $\Phi$, $\rho \leq q$, such that $i_{\rho,q}$ is Hilbert-Schmidt and for which $\widetilde{J}(A)$ possesses a version that is a c\`{a}dl\`{a}g, mean-zero, square integrable, L\'{e}vy process in $\Phi'_{q}$. We denote this version again by $\widetilde{J}(A)$. Let $\{ \phi_{j}^{q} \}_{j \in \N} \subseteq \Phi$ be a complete orthonormal system in $\Phi_{q}$. Then from Fubini's theorem, Parseval's identity and \eqref{secondMomentCompensatedPoissonIntegral}, for every $t \geq 0$ we have 
\begin{equation} \label{secondMomentPoissonIntegAHilbertSpace}
\Exp \left( q'(\widetilde{J}_{t}(A))^{2}\right) = \sum_{j =1}^{\infty} \Exp \left( \abs{\widetilde{J}_{t}(A)[\phi_{j}^{q}]}^{2}\right) = t \sum_{j =1}^{\infty} \int_{A} \abs{f[\phi_{j}^{q}]}^{2} \nu (df) = t \int_{A} q'(f)^{2} \nu (df). 
\end{equation}

Now consider on $\mathcal{A}_{\rho'}$ the order induced by the inclusion of sets. Our next objective is to show that for every $T>0$ the net $\{ \{ \widetilde{J}_{t}(A) \}_{t \in [0,T]}: A \in \mathcal{A}_{\rho'} \}$ converges in the space 
$\mathcal{M}^{2}_{T}(\Phi'_{q})$. To do this, we will show that for a fixed $T>0$, $\{ \{ \widetilde{J}_{t}(A) \}_{t \in [0,T]}: A \in \mathcal{A}_{\rho'} \}$ is a Cauchy net in $\mathcal{M}^{2}_{T}(\Phi'_{q})$, then convergence follows by completeness of this space. 

Fix an arbitrary $T>0$. First observe that if $A_{1}, A_{2} \in \mathcal{A}_{\rho'}$, $A_{1} \subseteq A_{2}$, then from Doob's inequality, the definition of compensated Poisson integral and \eqref{secondMomentPoissonIntegAHilbertSpace} we have
\begin{equation} \label{cauchyCondPoissonIntegSetsALevyIto} 
\Exp \left( \sup_{t \in [0,T]} q'(\widetilde{J}_{t}(A_{1})-\widetilde{J}_{t}(A_{2}))^{2} \right) 
\leq  4 \Exp  \left( q'(\widetilde{J}_{T}(A_{2} \setminus A_{1}))^{2} \right) 
=  4 T \int_{A_{2} \setminus A_{1}} q'(f)^{2} \nu (df). 
\end{equation}
% & \leq & 4 \Exp  \left( q'(\widetilde{J}_{T}^{A_{1}}-\widetilde{J}_{T}^{A_{2}})^{2} \right) \\
Therefore if we can show that 
\begin{equation} \label{limitIntegOnSetsAForCauchy}
\lim_{A \in \mathcal{A}_{\rho'}} \int_{A} q'(f)^{2} \nu (df) = \int_{B_{\rho'}(1)}  q'(f)^{2} \nu (df) < \infty,
\end{equation}
then \eqref{cauchyCondPoissonIntegSetsALevyIto} and \eqref{limitIntegOnSetsAForCauchy} would show that $\{ \widetilde{J}^{A} \}_{A \in \mathcal{A}_{\rho'}}$ is a Cauchy net on $\mathcal{M}^{2}_{T}(\Phi'_{q})$. 

To prove \eqref{limitIntegOnSetsAForCauchy}, note that as $\nu$ is a Borel measure on $B_{\rho'}(1)$, and $B_{\rho'}(1)$ is a Suslin set (it is the image under the continuous map $i'_{\rho}$ of the unit ball of the separable Hilbert space $\Phi'_{\rho}$), then $\nu$ is a Radon measure on $B_{\rho'}(1)$ (\cite{BogachevMT}, Vol II, Theorem 7.4.3, p.85). Moreover as $B_{\rho'}(1) \setminus \{ 0 \} = \bigcup_{A \in \mathcal{A}_{\rho'}} A$ and because $\nu$ is a Radon probability measure on $B_{\rho'}(1)$ such that $\nu(\{0\})=0$, we have that $\nu (B_{\rho'}(1)) = \lim_{A \in \mathcal{A}_{\rho'}} \nu(A)$ (see \cite{BogachevMT}, Vol. II, Propositions 7.2.2 and 7.2.5, p.74-5). Therefore from all the above we have 
\begin{eqnarray*}
\lim_{A \in \mathcal{A}_{\rho'}} \abs{ \int_{B_{\rho'}(1)}  q'(f)^{2} \nu (df) - \int_{A} q'(f)^{2} \nu (df) } 
& \leq &   \lim_{A \in \mathcal{A}_{\rho'}} \int_{B_{\rho'}(1) \setminus A} q'(f)^{2} \nu (df) \\
& \leq & \sup_{f \in B_{\rho'}(1)} q'(f)^{2} \lim_{A \in \mathcal{A}_{\rho'}} \mu ( B_{\rho'}(1) \setminus A)=0,  
\end{eqnarray*}
and hence \eqref{limitIntegOnSetsAForCauchy} is valid. 

Thus $\{ \{ \widetilde{J}_{t}(A) \}_{t \in [0,T]}: A \in \mathcal{A}_{\rho'} \}$ is a Cauchy net on $\mathcal{M}^{2}_{T}(\Phi'_{q})$ for every $T>0$. This in turn implies that $\{ \widetilde{J}^{A}: A \in \mathcal{A}_{\rho'} \}$ converges in $\mathcal{M}^{2}(\Phi'_{q})$. 
Therefore there exists some $M=\{ M_{t} \}_{t \geq 0}$ that is a $\Phi'_{q}$-valued mean-zero, square integrable, \cadlag martingale such that the net $\{ \widetilde{J}(A): A \in \mathcal{A}_{\rho'} \}$ converges to $M$ in $L^{2} \left( \Omega, \mathcal{F}, \Prob; \Phi'_{q} \right)$  uniformly on compact intervals of $[0,\infty)$. This uniform convergence, \eqref{secondMomentPoissonIntegAHilbertSpace} and \eqref{limitIntegOnSetsAForCauchy} imply that $M$ satisfies \eqref{secondMomentHilbertSpaceMartingaleLID}. Moreover viewing $M$ as a $\Phi'_{\beta}$-valued processes it is also a $\Phi'_{\beta}$-valued, mean-zero, square integrable, \cadlag martingale. 

To prove \eqref{charactFunctionMartingaleLevyItoDecomp} and \eqref{secondMomentMartingaleLID}, let $\phi \in \Phi$ arbitrary but fixed. From a basic estimate of the complex exponential function (proved in e.g. \cite{Sato}, Lemma 8.6, p.40) we have
$$ \abs{ e^{i f[\phi]}-1-if[\phi] } \leq \frac{\abs{f[\phi]}^{2}}{2}  \leq \frac{\rho(\phi)^{2} \rho'(f)^{2}}{2} \leq \frac{\rho(\phi)^{2}}{2}  < \infty, \quad \forall \, f \in B_{\rho'}(1).$$
Therefore the functions $f \mapsto (e^{i f[\phi]}-1-if[\phi])$ and $f \mapsto \abs{f[\phi]}^{2}$ are bounded on $B_{\rho'}(1)$. Then, using similar arguments to those used to prove \eqref{limitIntegOnSetsAForCauchy} we can show that 
\begin{equation} \label{limitWeakSecondMomentMartingLID}
\lim_{A \in \mathcal{A}_{\rho'}} \int_{A} \abs{f[\phi]}^{2} \nu (df) = \int_{B_{\rho'}(1)} \abs{f[\phi]}^{2} \nu (df),
\end{equation}
and 
\begin{equation} \label{limitCharactFuncMartingLID}
\lim_{A \in \mathcal{A}_{\rho'}} \int_{A} (e^{i f[\phi]}-1-if[\phi]) \nu (df) = \int_{B_{\rho'}(1)} (e^{i f[\phi]}-1-if[\phi]) \nu (df).
\end{equation}
On the other hand, for any $A \in \mathcal{A}_{\rho'}$ and $T>0$, we have that  
\begin{equation} \label{uniformConvergL2PoissonIntegDualSpace} 
\Exp \left( \sup_{t \in [0,T]} \abs{M_{t}[\phi]-\widetilde{J}_{t}(A)[\phi]}^{2} \right) 
\leq  q(\phi)^{2} \, \Exp \left( \sup_{t \in [0,T]} q'( M_{t}-\widetilde{J}_{t}(A))^{2} \right).
\end{equation}
Therefore the fact that $\{ \widetilde{J}(A): A \in \mathcal{A}_{\rho'} \}$ converges to $M$ in $\mathcal{M}^{2}(\Phi'_{q})$ and \eqref{uniformConvergL2PoissonIntegDualSpace}, imply that $\{ \widetilde{J}(A)[\phi]: A \in \mathcal{A}_{\rho'} \}$ converges to $M[\phi]$ in $L^{2} \left( \Omega, \mathcal{F}, \Prob\right)$  uniformly on compact intervals of $[0,\infty)$. This convergence together with \eqref{secondMomentCompensatedPoissonIntegral} and \eqref{limitWeakSecondMomentMartingLID} imply \eqref{secondMomentMartingaleLID}. 

Furthermore as for each $t \geq 0$, $\left\{ \widetilde{J}_{t}(A)[\phi]: A \in \mathcal{A}_{\rho'} \right\}$ converges to $M_{t}[\phi]$ in $L^{2} \left( \Omega, \mathcal{F}, \Prob\right)$, then the net of characteristics functions $\{ \Exp \left( \exp \left( i \widetilde{J}_{t}(A)[\phi] \right) \right): A \in \mathcal{A}_{\rho'} \}$ converges to the characteristic function $\Exp \left( \exp \left(i M_{t}[\phi]\right) \right)$ of $M_{t}$. Then, \eqref{charactFunctionCompensatedPoissonIntegral} and \eqref{limitCharactFuncMartingLID} implies \eqref{charactFunctionMartingaleLevyItoDecomp}.     

Finally as $\mathcal{M}^{2}(\Phi'_{q})$ is metrizable, we can choose a subsequence $\{ \widetilde{J}^{A_{n}}: n \in \N \}$ that converges to $M$ in $\mathcal{M}^{2}(\Phi'_{q})$. Then, $\{ \widetilde{J}^{A_{n}}: n \in \N \}$ converges to $M$ in $L^{2} \left( \Omega, \mathcal{F}, \Prob; \Phi'_{q} \right)$  uniformly on compact intervals of $[0,\infty)$ and because each $\widetilde{J}^{A_{n}}$ is a $\Phi'_{q}$-valued L\'{e}vy process, this implies that $M$ is also a $\Phi'_{q}$-valued L\'{e}vy process. This last fact implies that $M$ is also a $\Phi'_{\beta}$-valued L\'{e}vy process.      
\end{prf}

\begin{nota}
We denote by $\left\{ \int_{B_{\rho'}(1)} f \widetilde{N}(t,df): t \geq 0 \right\}$ the process $M=\{ M_{t} \}_{t \geq 0}$ defined in Theorem \ref{existenceMartingaleLevyItoDecomp}. 
\end{nota}

The next result follows from Proposition \ref{propPropertiesPoissonIntegrals} and because $\restr{\nu}{B_{\rho'}(1)^{c}} \in  \goth{M}_{R}^{b}(\Phi'_{\beta})$. 

\begin{prop} \label{propExistenceLargeJumpPartLID}
The $\Phi'_{\beta}$-valued process $\left\{ \int_{B_{\rho'}(1)^{c}} f N(t,df): t \geq 0  \right\}$ defined by
\begin{equation} \label{definitionCompoundPoissonLID}
 \int_{B_{\rho'}(1)^{c}} f N(t,df) (\omega)[\phi]= \sum_{0 \leq s \leq t} \Delta L_{s}(\omega) [\phi] \ind{B_{\rho'}(1)^{c} }{\Delta L_{s}(\omega)}, \quad \forall \omega \in \Omega, \, \phi \in \Phi, \, t \geq 0.   
\end{equation}
is a $\{ \mathcal{F}_{t} \}$-adapted $\Phi'_{\beta}$-valued, regular, c\`{a}dl\`{a}g L\'{e}vy process. 
Moreover $\forall \phi \in \Phi$, $t \geq 0$
\begin{equation} \label{charactFunctionCompoundPoissonLID}
\Exp \left( \exp\left\{ i \int_{B_{\rho'}(1)^{c}} f N(t,df)[\phi] \right\} \right) = \exp \left\{ t \int_{B_{\rho'}(1)^{c}} \left( e^{i f[\phi]}-1 \right) \nu (d f) \right\}.
\end{equation}
\end{prop}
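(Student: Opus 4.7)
The plan is to obtain this proposition as an immediate specialization of Proposition \ref{propPropertiesPoissonIntegrals} to the concrete stationary Poisson point process $p = \Delta L$ associated to the L\'{e}vy process $L$, with the choice of set $A = B_{\rho'}(1)^{c}$. All the heavy lifting (adaptedness, càdlàg property, regularity, semigroup property of the distributions, characteristic function) has already been carried out in full generality in Proposition \ref{propPropertiesPoissonIntegrals}, so what remains is essentially a verification of hypotheses.

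First I would recall from Section \ref{subSectionPMPILP} that $\Delta L = \{\Delta L_{t}\}_{t \geq 0}$ is a regular $\{\mathcal{F}_{t}\}$-adapted stationary Poisson point process on $(\Phi'_{\beta}\setminus\{0\},\mathcal{B}(\Phi'_{\beta}\setminus\{0\}))$ whose associated Poisson random measure is $N$ and whose characteristic measure is $\nu$ in the sense of \eqref{intensityMeasurePoissonRandomMeasure}. Thus $\Delta L$ plays the role of $p$ in Section \ref{subsectionPRMPI}. The set $A = B_{\rho'}(1)^{c}$ is Borel, and by Theorem \ref{associatedMeasureIsLevyMeasure} together with Definition \ref{defiLevyMeasureDualNuclearSpace}(3) the restriction $\restr{\nu}{B_{\rho'}(1)^{c}}$ belongs to $\goth{M}^{b}_{R}(\Phi'_{\beta})$; in particular $\nu(B_{\rho'}(1)^{c}) < \infty$, which is exactly the finiteness hypothesis in the definition of the Poisson integral.

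Second, the pathwise expression \eqref{definitionCompoundPoissonLID} is nothing but the formula \eqref{poissonIntegralForIdenity} for $J^{(p)}_{t}(A)$ evaluated at $\phi$. Hence the process in the statement coincides with $J(B_{\rho'}(1)^{c}) = \{J_{t}(B_{\rho'}(1)^{c})\}_{t \geq 0}$ in the notation of Section \ref{subSectionPMPILP}. Applying Proposition \ref{propPropertiesPoissonIntegrals} therefore yields at once that this process is a $\{\mathcal{F}_{t}\}$-adapted $\Phi'_{\beta}$-valued regular càdlàg L\'{e}vy process, and the characteristic function identity \eqref{charactFunctionCompoundPoissonLID} is precisely \eqref{charactFunctionPoissonIntegral} with $\nu_{p}$ replaced by $\nu$ and $A$ by $B_{\rho'}(1)^{c}$.

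No genuine obstacle arises; the only point requiring even a moment's thought is the finiteness $\nu(B_{\rho'}(1)^{c}) < \infty$, which has already been secured by the proof that $\nu$ is a L\'{e}vy measure (Theorem \ref{associatedMeasureIsLevyMeasure}) so that $\restr{\nu}{B_{\rho'}(1)^{c}}$ is a bounded Radon measure on $\Phi'_{\beta}$.
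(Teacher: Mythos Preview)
Your proposal is correct and takes essentially the same approach as the paper: the paper simply remarks that the proposition follows from Proposition~\ref{propPropertiesPoissonIntegrals} together with the fact that $\restr{\nu}{B_{\rho'}(1)^{c}} \in \goth{M}_{R}^{b}(\Phi'_{\beta})$. Your write-up is more explicit in tracing why that finiteness holds (via Theorem~\ref{associatedMeasureIsLevyMeasure} and Definition~\ref{defiLevyMeasureDualNuclearSpace}(3)), but the argument is identical in substance.
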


Now define the process $Y=\{ Y_{t} \}_{t\geq 0}$ by
\begin{equation} \label{auxiliarProcess1LevyItoDecomp}
Y_{t}=L_{t}-\int_{B_{\rho'}(1)^{c}} f N(t,df), \quad \forall \, t \geq 0. 
\end{equation}

From Theorem \ref{independenceLevyAndItsPoissonIntegrals} and Proposition \ref{propExistenceLargeJumpPartLID} it follows that $Y$ is  a $\{ \mathcal{F}_{t} \}$-adapted $\Phi'_{\beta}$-valued regular c\`{a}dl\`{a}g L\'{e}vy process independent of $\left\{ \int_{B_{\rho'}(1)^{c}} f N(t,df): t \geq 0 \right\}$.  Moreover from the definition of the Poisson integral \eqref{definitionCompoundPoissonLID}, for any $0 \leq s<t$, 
$$Y_{t}-Y_{s} = L_{t}-L_{s}- \sum_{s < u \leq t} \Delta L_{u} \ind{B_{\rho'}(1)^{c}}{\Delta L_{u}}.$$ 	
Therefore $ \sup_{t \geq 0} \rho'(\Delta Y_{t}(\omega)) \leq 1$ for each $\omega \in \Omega$. This in particular implies that for each $ \phi \in \Phi$, the real-valued process $Y[\phi]$ satisfies, $ \sup_{t \geq 0} \abs{\Delta Y_{t}[\phi](\omega)} \leq \rho(\phi)< \infty$ for each $\omega \in \Omega$, thus $Y[\phi]$ has bounded jumps and consequently $Y$ has finite moments to all orders (see \cite{ApplebaumLPSC}, Theorem 2.4.7, p.118-9). Moreover the independent and stationary increments of $Y$ imply that for each $\phi \in \Phi$, the map $t \mapsto \Exp \left( Y_{t}[\phi] \right)$ is additive and measurable. Therefore, there exists some $\goth{m} \in \Phi'_{\beta}$ such that $\Exp \left( Y_{t}[\phi] \right)= t \goth{m} [\phi]$, for all $\phi \in \Phi$, $t  \geq 0$.

Now consider the process $Z =\{ Z_{t} \}_{t\geq 0}$ given by
\begin{equation} \label{auxiliarProcess2LevyItoDecomp}
Z_{t}= Y_{t}-t\goth{m}, \quad \forall \,  t \geq 0. 
\end{equation}  
From the properties of $Y$ and the definition of $\goth{m}$, $Z$ is a $\{ \mathcal{F}_{t} \}$-adapted $\Phi'_{\beta}$-valued, mean-zero, c\`{a}dl\`{a}g, regular L\'{e}vy process with moments to all orders and with jumps satisfying $ \sup_{t \geq 0} \rho'(\Delta Z_{t}(\omega)) \leq 1$ for each $\omega \in \Omega$. 

Now for every $\phi \in \Phi$, let $\kappa(\phi)=\Exp \left[ \abs{ Z_{1}[\phi]}^{2} \right]$. The fact that $Z_{1}$ is a regular random variable with second moments shows that $\kappa$ is a continuous Hilbertian semi-norm on $\Phi$. Moreover the independent and stationary increments of $Z$ imply that $ \Exp \left( \abs{ Z_{t}[\phi]}^{2} \right)= t \kappa(\phi)^{2} $, for all $\phi \in \Phi$, $t  \geq 0$. Hence from Doob's inequality we have for every $T>0$ that: 
\begin{equation} \label{secondMomentProcessZLID}
\Exp \left( \sup_{t \in [0,T]} \abs{Z_{t}[\phi]}^{2} \right) \leq 4 \Exp \left( \abs{ Z_{T}[\phi]}^{2} \right)= 4 T \kappa(\phi)^{2}\, \quad   \forall \,  \phi \in \Phi.
\end{equation}

\begin{theo} \label{wienerPartLevyItoDecomp}
For the $\Phi'_{\beta}$-valued process $X=\{X_{t} \}_{t \geq 0}$ defined by 
\begin{equation} \label{definitionWienerPartLID}
X_{t}=Z_{t}-\int_{B_{\rho'}(1)} f \widetilde{N} (t,df), \quad \forall \, t \geq 0,
\end{equation}
there exist a continuous Hilbertian semi-norm $\eta$ on $\Phi$ and a $\Phi'_{\eta}$-valued $\{ \mathcal{F}_{t} \}$-adapted Wiener process $W=\{ W_{t} \}_{t \geq 0}$ with mean-zero and covariance functional $\mathcal{Q}$ 
(as defined in Theorem \ref{propertiesWienerProcess}) such that $W$ is an indistinguishable version of $X$. Moreover the semi-norm $\eta$ can be chosen such that $\mathcal{Q} \leq K \, \eta$ (for some $K>0$) and the map $i_{\mathcal{Q},\eta}$ is Hilbert-Schmidt. 
\end{theo}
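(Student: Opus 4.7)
The strategy is to show that $X$ is a continuous $\Phi'_\beta$-valued L\'{e}vy process (after passing to an indistinguishable $\Phi'_\eta$-valued version), and then to identify this continuous L\'{e}vy process with the Wiener process $W$ via the regularization theorem. I would begin by observing that $X = Z - M$ is an $\{\mathcal{F}_t\}$-adapted, regular, c\`{a}dl\`{a}g process in $\Phi'_\beta$ (since $Z$ and $M$ are so, by Theorem \ref{existenceMartingaleLevyItoDecomp}); its increments $X_{t+s}-X_t$ are functionals of the restriction of the Poisson random measure $N$ to the window $(t,t+s]$, which is independent of $\mathcal{F}_t$ and stationary, so $X$ has independent stationary increments (Lemma \ref{lemmLevyProcessIsCylindricalLevy}). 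Moreover \eqref{secondMomentProcessZLID}, \eqref{secondMomentMartingaleLID} and Doob's inequality applied to the martingale $M[\phi]$ combine to give the uniform second-moment bound $\Exp \sup_{t \leq T} \abs{X_t[\phi]}^2 \leq C(T) \varrho(\phi)^2$ for the continuous Hilbertian semi-norm $\varrho(\phi)^2 = \kappa(\phi)^2 + \rho(\phi)^2 \int_{B_{\rho'}(1)} \rho'(f)^2 \nu(df)$ on $\Phi$.

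The technical heart of the proof is to show that each $X[\phi]$ has a.s. continuous sample paths. Define $A_n := B_{\rho'}(1)\setminus B_{\rho'}(1/n) \in \mathcal{A}_{\rho'}$; because $\rho'$ is a norm on $B_{\rho'}(1) \subseteq \Phi'_\rho$, one has $\bigcup_n A_n = B_{\rho'}(1) \setminus \{0\}$. The pre-limit $X^{A_n}_t := Z_t - \widetilde{J}_t(A_n)$ is a real-valued L\'{e}vy process whose jumps satisfy $\Delta X^{A_n}_t[\phi] = \Delta L_t[\phi]\, \ind{B_{\rho'}(1)\setminus A_n}{\Delta L_t}$. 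From the $\mathcal{M}^2(\Phi'_q)$-convergence $\widetilde{J}(A_n) \to M$ established inside the proof of Theorem \ref{existenceMartingaleLevyItoDecomp}, together with $\abs{f[\phi]}^2 \leq q(\phi)^2 q'(f)^2$, one obtains $\Exp \sup_{t \leq T} \abs{X_t[\phi] - X^{A_n}_t[\phi]}^2 \to 0$; passing to a subsequence yields a.s. uniform convergence on $[0,T]$, so that $\Delta X_t[\phi] = \lim_n \Delta X^{A_n}_t[\phi]$ for every $t$. For each jump time $t$ of $L(\omega)$ the right-hand side vanishes: if $\Delta L_t(\omega) \notin B_{\rho'}(1)$ all indicators are zero, and if $\Delta L_t(\omega) \in B_{\rho'}(1)\setminus\{0\}$ then $\rho'(\Delta L_t(\omega)) > 0$, whence $\Delta L_t(\omega) \in A_n$ for all sufficiently large $n$. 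Therefore $X[\phi]$ is a.s.~continuous.

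With continuity of each $X[\phi]$ and the second-moment bound in hand, I would apply Theorem \ref{theoCylindrLevyHilbContHaveLevyCadlagVersHilbSpace} with $n=2$ to the cylindrical L\'{e}vy process determined by $X$. This produces a continuous Hilbertian semi-norm $\eta \geq \varrho$ with $i_{\varrho,\eta}$ Hilbert-Schmidt and a continuous, $\Phi'_\eta$-valued, L\'{e}vy process $W$ with $W[\phi] = X[\phi]$ a.s.~for each $\phi$. Proposition \ref{propCondiIndistingProcess} then makes $W$ and $X$ indistinguishable as regular c\`{a}dl\`{a}g $\Phi'_\beta$-valued processes. By Definition \ref{wienerProcess} the continuous L\'{e}vy process $W$ is Wiener; since $\Exp W_t[\phi] = \Exp Z_t[\phi] - \Exp M_t[\phi] = 0$, the mean vanishes; the covariance functional $\mathcal{Q}$ from Theorem \ref{propertiesWienerProcess} satisfies $\mathcal{Q}(\phi)^2 = \Exp \abs{W_1[\phi]}^2 \leq C(1) \varrho(\phi)^2 \leq C(1) \eta(\phi)^2$, so $\mathcal{Q} \leq K\eta$; and $i_{\mathcal{Q},\eta}$ factors through $i_{\varrho,\eta}$ (Hilbert-Schmidt) composed with the bounded canonical map $\Phi_\varrho \to \Phi_\mathcal{Q}$, hence is Hilbert-Schmidt.

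The main obstacle is the pathwise continuity step. The raw sum $\sum_{s\leq t} \Delta L_s\, \ind{B_{\rho'}(1)}{\Delta L_s}$ can fail to converge pointwise since $\nu$ may place infinite mass near zero; only the compensated integral survives, and only as an $L^2$-limit. Bridging this limit to pointwise jump behavior requires both the metric structure of $\mathcal{M}^2(\Phi'_q)$ (to extract an almost-surely uniformly convergent subsequence) and the elementary but important observation that every nonzero element of $B_{\rho'}(1)$ is eventually captured by the increasing family $\{A_n\}$.
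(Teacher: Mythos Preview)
Your proof is correct and reaches the same conclusion as the paper, but the key step---establishing that each real-valued projection $X[\phi]$ is a Wiener process---is handled by a genuinely different argument.

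The paper fixes $\phi$ with $\rho(\phi)=1$, invokes the one-dimensional L\'{e}vy--It\^{o} decomposition of the real-valued L\'{e}vy process $Z[\phi]$, and then identifies its Poisson random measure $N_\phi$ with the cylindrical push-forward $N_Z(\cdot,\mathcal{Z}(\phi;\cdot))$. Using that $N_Z$ is supported in $B_{\rho'}(1)$ (the jumps of $Z$ are bounded there) and that $B_{\rho'}(1)\subseteq\{f:\abs{f[\phi]}\le 1\}$, the paper shows the small-jump martingale in the one-dimensional decomposition coincides with $\int_{B_{\rho'}(1)} f\,\widetilde{N}(t,df)[\phi]$ and the large-jump part vanishes, leaving $X_t[\phi]=\sigma_\phi^2(B_\phi)_t$ directly. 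Your route is instead a jump-cancellation argument: approximate $M$ by $\widetilde{J}(A_n)$ with $A_n=B_{\rho'}(1)\setminus B_{\rho'}(1/n)$, pass to an a.s.\ uniformly convergent subsequence, and observe that every nonzero jump of $L$ in $B_{\rho'}(1)$ is eventually absorbed by $A_n$, so $\Delta X_t[\phi]=\lim_n \Delta L_t[\phi]\,\ind{B_{\rho'}(1/n)}{\Delta L_t}=0$. Your argument is more elementary in that it avoids quoting the real-valued L\'{e}vy--It\^{o} theorem and works purely with the $\mathcal{M}^2(\Phi'_q)$-convergence already established in Theorem~\ref{existenceMartingaleLevyItoDecomp}; the paper's argument is shorter once that theorem is taken for granted and has the bonus of immediately exhibiting $X[\phi]$ as a scaled Brownian motion rather than merely a continuous L\'{e}vy process. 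The remaining steps---the second-moment bound, the application of Theorem~\ref{theoCylindrLevyHilbContHaveLevyCadlagVersHilbSpace}, indistinguishability via Proposition~\ref{propCondiIndistingProcess}, and the Hilbert--Schmidt factorisation of $i_{\mathcal{Q},\eta}$---match the paper essentially verbatim.

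One small remark: your sentence that the increments of $X$ are ``functionals of the restriction of the Poisson random measure $N$ to the window $(t,t+s]$'' understates things, since $Z$ also carries the continuous (non-Poisson) part of $L$. The correct statement is that $X_{t+s}-X_t$ is measurable with respect to $\sigma(L_u-L_t:\,t\le u\le t+s)$, which is independent of $\mathcal{F}_t$; this is what actually gives the cylindrical L\'{e}vy property you need to feed into Theorem~\ref{theoCylindrLevyHilbContHaveLevyCadlagVersHilbSpace}.
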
 
\begin{prf}
First it is clear that $X$ is a $\Phi'_{\beta}$-valued $\{ \mathcal{F}_{t} \}$-adapted, c\`{a}dl\`{a}g process that has mean-zero and square moments.

Now, we will show that for each $\phi \in \Phi$,  the real-valued process $X[\phi]=\left\{ X_{t}[\phi] \right\}_{t \geq 0}$ is a Wiener process. We proceed in a similar way as in the proof of Proposition 6.2 in \cite{RiedleVanGaans:2009}, where a similar result for the separable Banach space case is considered.

First let $\phi \in \Phi$ be such that $\rho(\phi)=1$. As $Z[\phi]$ defines a real-valued c\`{a}dl\`{a}g L\'{e}vy process it has a corresponding L\'{e}vy-It\^{o} decomposition (see \cite{ApplebaumLPSC}, Theorem 2.4.16, p.126) given by 
$$ Z_{t}[\phi]=b_{\phi}t+\sigma_{\phi}^{2} (B_{\phi})_{t}+ \int_{\{ \abs{y} \leq 1 \}} y \widetilde{N}_{\phi} (t,dy)+  \int_{\{ \abs{y} > 1 \}} y N_{\phi} (t,dy)$$
where $b_{\phi} \in \R$, $\sigma_{\phi}^{2} \in \R_{+}$, $B_{\phi}$ is a standard real-valued Wiener process, $N_{\phi}$ is the Poisson random measure of $Z[\phi]$ and $\widetilde{N}_{\phi}$ its compensated Poisson random measure. All the random components of the decomposition are independent.
For a set $C \in \mathcal{B}(\R)$ that is bounded below we have that  
\begin{equation*}
N_{\phi}(t,C)(\omega)
 =  \sum_{0 \leq s \leq t} \ind{C}{\Delta Z_{s}(\omega)[\phi]} 
=  \sum_{0 \leq s \leq t} \ind{\mathcal{Z}(\phi;C)}{\Delta Z_{s}(\omega)} 
=  N_{Z}\left( t, \mathcal{Z}(\phi;C) \right)(\omega),  
\end{equation*}
where $\mathcal{Z}(\phi;C) \defeq \left\{ f \in \Phi': f[\phi] \in C \right\}$, and $N_{Z}$ denotes the Poisson random measure associated to $Z$. Note that $\mathcal{Z}(\phi;C)$ is a cylindrical set and consequently belongs to $\mathcal{B}(\Phi'_{\beta})$. Moreover as $C$ is bounded below in $\mathcal{B}(\R)$, it follows that $\mathcal{Z}(\phi;C)$ is bounded below in $\mathcal{B}(\Phi'_{\beta})$. To see why this is true, let $\pi_{\phi}$ be given by \eqref{defiMapProjectionCylinder}. Then by \eqref{defiCylindricalSet} and the continuity of $\pi_{\phi}$ it follows that $\overline{\mathcal{Z}(\phi;C)} = \overline{ \pi_{\phi}^{-1}(C)} \subseteq \pi_{\phi}^{-1}(\overline{C})$. Hence if $0 \in \overline{\mathcal{Z}(\phi;C)}$ then $0 \in \pi_{\phi}^{-1}(\overline{C})$, and consequently $0 \in \overline{C}$. But this contradicts the fact that $C$ is bounded below. Therefore, $\mathcal{Z}(\phi;C)$ is bounded below. 

Now let $C=[-1,1]^{c}$ and $D=\left\{ f \in \Phi': \abs{f[\phi]} \leq 1 \right\}$. We then have that $D=\mathcal{Z}(\phi;C)^{c}$ and because $\phi \in B_{\rho}(1)$, it follows that $B_{\rho'}(1) \subseteq D$. 
Now because the jumps of $Z$ satisfy $\sup_{t \geq 0} \rho'(\Delta Z_{t}(\omega)) \leq 1$ for each $\omega \in \Omega$, the support of $N_{Z}(t, \cdot)$ is in $B_{\rho'}(1)$ for each $t\geq 0$, and consequently the support of $\widetilde{N}_{Z}(t,\cdot)$ is also in $B_{\rho}(1)$ for $t \geq 0$. Since $B_{\rho'}(1) \subseteq D$, it follows that 
\begin{equation*}
\int_{D} f \widetilde{N}_{Z} (t,df)[\phi] 
 = \int_{B_{\rho'}(1)} f \widetilde{N}_{Z} (t,df)[\phi]+  \int_{D \setminus B_{\rho'}(1)} f \widetilde{N}_{Z} (t,df)[\phi] 
=  \int_{B_{\rho'}(1)} f \widetilde{N}_{Z} (t,df)[\phi]
\end{equation*}
and 
$$\int_{D^{c}} f N_{Z} (t,df)[\phi]=0.$$
Moreover $\widetilde{N}_{Z}$ coincides with $\widetilde{N}$ in $B_{\rho'}(1)$, so we have that 
\begin{eqnarray*}
Z_{t}[\phi]
& = & b_{\phi}t+\sigma_{\phi}^{2} (B_{\phi})_{t}+ \int_{\left\{ \abs{y} < 1 \right\}} y \widetilde{N}_{\phi} (t,dy)+  \int_{\left\{ \abs{y} \geq 1 \right\}} y N_{\phi} (t,dy) \\
& = & b_{\phi}t+\sigma_{\phi}^{2} (B_{\phi})_{t}+ \int_{D} f \widetilde{N}_{Z} (t,df)[\phi]+  \int_{D^{c}} f N_{Z} (t,df)[\phi] \\
& = & b_{\phi}t+\sigma_{\phi}^{2} (B_{\phi})_{t}+ \int_{B_{\rho'}(1)} f \widetilde{N}_{Z} (t,df)[\phi] \\
& = & b_{\phi}t+\sigma_{\phi}^{2} (B_{\phi})_{t}+ \int_{B_{\rho'}(1)} f \widetilde{N} (t,df)[\phi] \\
\end{eqnarray*}
Now taking expectations we obtain that for every $t \geq 0$,
$$0=\Exp Z_{t}[\phi] = b_{\phi}t  +\sigma_{\phi}^{2} \Exp \left( (B_{\phi})_{t} \right) + \Exp \left( \int_{ B_{\rho'}(1)} f \widetilde{N} (t,df)[\phi] \right) = b_{\phi}t$$
consequently $b_{\phi}=0$. We obtain $X_{t}[\phi]=Z_{t}[\phi]-\int_{ B_{\rho'}(1) } f \widetilde{N} (t,df)[\phi]=\sigma_{\phi}^{2} (B_{\phi})_{t}$ and so $X[\phi]$ is a Wiener process. 
The same representation holds for arbitrary $\phi \in \Phi$, as can be seen by replacing $\phi$ with $\phi / \rho(\phi)$ in the argument just given. Therefore $X[\phi]$ is a Wiener process $\forall \phi \in \Phi$. 

Now note that for every $T>0$ and $\phi \in \Phi$, from Doob's inequality, \eqref{secondMomentMartingaleLID} and \eqref{secondMomentProcessZLID}, we have that
\begin{eqnarray*}
 \Exp \left( \sup_{t \in [0,T]} \abs{X_{t}[\phi]}^{2} \right) 
 & \leq & 4\Exp \left( \abs{ X_{T}[\phi]}^{2} \right) \\
& \leq & 8 T \left( \Exp \left( \abs{ Z_{T}[\phi]}^{2} \right) + \Exp \left( \abs{ M_{T}[\phi]}^{2} \right) \right)  \\
& \leq & 8T (\kappa(\phi)^{2} + C_{\rho} \, q(\phi)^{2}),
\end{eqnarray*}
where $C_{\rho}= \int_{ B_{\rho'}(1)} q'(f)^{2} \nu(df)<\infty$. Let $\sigma$ be a continuous Hilbertian semi-norm on $\Phi$ such that $\kappa \leq \sigma$ and $q \leq \sigma$. Then from the above inequalities for each $T>0$ and $\phi \in \Phi$ we have
$$ \Exp \left( \sup_{t \in [0,T]} \abs{X_{t}[\phi]}^{2} \right) 
  \leq  8T (1+ C_{\rho} ) \sigma(\phi)^{2}. $$
Then Theorem \ref{theoCylindrLevyHilbContHaveLevyCadlagVersHilbSpace} shows that there exists a continuous Hilbertian semi-norm $\eta$ on $\Phi$, $\sigma \leq \eta$, such that $i_{\sigma,\eta}$ is Hilbert-Schmidt and there exists a $\Phi'_{\eta}$-valued Wiener processes (i.e. a continuous L\'{e}vy process) $W=\{ W_{t}\}_{t \geq 0}$ that has finite second moments in $\Phi'_{\eta}$ and such that for every $\phi \in \Phi$, $W[\phi]= \{ W_{t}[\phi] \}_{t \geq 0}$ is a version of $X[\phi]= \{ X_{t}[\phi] \}_{t \geq 0}$. However as both $W$ and $X$ are regular c\`{a}dl\`{a}g processes in $\Phi'_{\beta}$, then the fact that $W[\phi]=X[\phi]$ for each $\phi \in \Phi$ implies that $W$ and $X$ are indistinguishable (Proposition \ref{propCondiIndistingProcess}). 
Hence, $W$ is $\{ \mathcal{F}_{t} \}$-adapted and is also $\Phi'_{\beta}$-valued Wiener process. 

Finally if $\mathcal{Q}$ is the covariance functional of $W$, from \eqref{covarianceFunctWienerProcess} it follows that for every $\phi \in \Phi$ we have
$$ \mathcal{Q}(\phi)^{2}=\Exp \left( \abs{ W_{1}[\phi]}^{2} \right)=  \Exp \left( \abs{ X_{1}[\phi]}^{2} \right) \leq 2 (1+ C_{\rho} ) \sigma(\phi)^{2} \leq 2 (1+ C_{\rho} ) \eta(\phi)^{2}. $$
Then $\mathcal{Q} \leq K \, \eta$ with $K^{2}=2 (1+ C_{\rho} )$. Moreover, because $i_{\mathcal{Q},\sigma}$ is linear and continuous and $i_{\sigma,\eta}$ is Hilbert-Schmidt, we have that $i_{\mathcal{Q},\eta}= i_{\sigma,\eta} \circ i_{\mathcal{Q},\sigma}$ is Hilbert-Schmidt. 
\end{prf}

We are ready for the main result of this section. 

\begin{theo}[L\'{e}vy-It\^{o} decomposition] \label{levyItoDecompositionTheorem}
Let $L=\left\{ L_{t} \right\}_{t\geq 0}$ be a $\Phi'_{\beta}$-valued L\'{e}vy process. Then for each $t \geq 0$ it has the following representation
\begin{equation} \label{levyItoDecomposition}
L_{t}=t\goth{m}+W_{t}+\int_{B_{\rho'}(1)} f \widetilde{N} (t,df)+\int_{B_{\rho'}(1)^{c}} f N (t,df)
\end{equation}
where 
\begin{enumerate}
\item $\goth{m} \in \Phi'_{\beta}$, 
\item $\rho$ is a continuous Hilbertian semi-norm on $\Phi$ such that the L\'{e}vy measure $\nu$ of $L$ satisfies \eqref{integrabilityLevyMeasureLevyProcess} and $B_{\rho'}(1) \defeq \{f \in \Phi'_{\beta}: \rho'(f) \leq 1\} \subseteq \Phi'_{\beta}$ is bounded, closed, convex and balanced, 
\item $\{ W_{t} \}_{t \geq 0}$ is a $\Phi'_{\eta}$-valued Wiener process with mean-zero and covariance functional $\mathcal{Q}$, where $\eta$ is a continuous Hilbertian semi-norm on $\Phi$ such that $\mathcal{Q} \leq K \eta$ (for some $K>0$) and the map $i_{\mathcal{Q},\eta}$ is Hilbert-Schmidt,
\item $\left\{ \int_{B_{\rho'}(1)} f \widetilde{N} (t,df): t\geq 0 \right\}$ is a $\Phi'_{q}$-valued mean-zero, square integrable, c\`{a}dl\`{a}g L\'{e}vy process with characteristic function given by \eqref{charactFunctionMartingaleLevyItoDecomp} and second moments given by \eqref{secondMomentMartingaleLID}, where $q$ is a continuous Hilbertian semi-norm on $\Phi$ such that $\rho \leq q$ and the map $i_{\rho,q}$ is Hilbert-Schmidt, 
\item $\left\{ \int_{B_{\rho'}(1)^{c}} f N (t,df): t\geq 0 \right\}$ is a $\Phi'_{\beta}$-valued c\`{a}dl\`{a}g L\'{e}vy process defined in \eqref{definitionCompoundPoissonLID} by means of a Poisson integral with respect to the Poisson random measure $N$ of $L$ on the set $B_{\rho'}(1)^{c}$ and  with characteristic function given by \eqref{charactFunctionCompoundPoissonLID}. 
\end{enumerate}
All the random components of the decomposition \eqref{levyItoDecomposition} are independent.     
\end{theo}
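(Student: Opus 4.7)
The plan is to assemble the decomposition from the pieces already built, and then to establish mutual independence of the summands. For the assembly, unwinding the defining identities \eqref{auxiliarProcess1LevyItoDecomp}, \eqref{auxiliarProcess2LevyItoDecomp} and \eqref{definitionWienerPartLID} gives, for each $t \geq 0$,
\[
L_t \, = \, Y_t + \int_{B_{\rho'}(1)^c} f \, N(t,df) \, = \, t \goth{m} + X_t + \int_{B_{\rho'}(1)} f \, \widetilde{N}(t,df) + \int_{B_{\rho'}(1)^c} f \, N(t,df),
\]
and by Theorem \ref{wienerPartLevyItoDecomp} the Wiener process $W$ is indistinguishable from $X$, so $X_t$ may be replaced by $W_t$ to yield \eqref{levyItoDecomposition}. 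Properties (1)--(5) have already been established: Theorem \ref{wienerPartLevyItoDecomp} gives the structure of $W$, Theorem \ref{existenceMartingaleLevyItoDecomp} gives the compensated small-jumps part, and Proposition \ref{propExistenceLargeJumpPartLID} gives the large-jumps part.

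For the mutual independence of the three random summands $W$, $M \defeq \left\{ \int_{B_{\rho'}(1)} f \, \widetilde{N}(t,df) \right\}_{t \geq 0}$ and $P \defeq \left\{ \int_{B_{\rho'}(1)^c} f \, N(t,df) \right\}_{t \geq 0}$, I would split the argument into two steps. The first step is to show that $P$ is independent of the pair $(W,M)$. Since \eqref{integrabilityLevyMeasureLevyProcess} gives $\nu(B_{\rho'}(1)^c) < \infty$, Theorem \ref{independenceLevyAndItsPoissonIntegrals} applied with $A = B_{\rho'}(1)^c$ yields that $Y = L - P$ is independent of $P$; the deterministic shift $Z = Y - t \goth{m}$ preserves this. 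Both $W$ and $M$ are $\sigma(Z)$-measurable up to null sets: $M$ is built in Theorem \ref{existenceMartingaleLevyItoDecomp} as the $\mathcal{M}^{2}(\Phi'_{q})$-limit of compensated integrals $\widetilde{J}(A)$ for $A \in \mathcal{A}_{\rho'}$, each of which depends only on the jumps of $L$ in $A \subseteq B_{\rho'}(1)$, and these coincide with the jumps of $Z$; and $W$ is indistinguishable from $X = Z - M$. Hence the pair $(W,M)$ is independent of $P$.

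The main obstacle is the second step, the independence of $W$ and $M$. I would fix $A \in \mathcal{A}_{\rho'}$ and apply Theorem \ref{independenceLevyAndItsPoissonIntegrals} to the L\'{e}vy process $Z$ (which inherits Assumption \ref{generalAssumptionsLevyProcess} since it is obtained from $L$ by subtracting a regular c\`{a}dl\`{a}g $\Phi'_{\beta}$-valued process and a deterministic drift): this produces independence of $Z - J(A)$ and $J(A)$, and since the compensator $t \int_{A} f \, \nu(df)$ is deterministic, $Z - \widetilde{J}(A)$ is also independent of $\widetilde{J}(A)$. The key move is then to pass to the limit along the net $\mathcal{A}_{\rho'}$: from the proof of Theorem \ref{existenceMartingaleLevyItoDecomp}, $\widetilde{J}(A) \to M$ in $\mathcal{M}^{2}(\Phi'_{q})$, so for each $t \geq 0$ one has $\widetilde{J}_t(A) \to M_t$ in probability in $\Phi'_q$, and consequently $Z_t - \widetilde{J}_t(A) \to W_t$ in probability. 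Because independence of a pair of random vectors is preserved under joint convergence in probability, the finite-dimensional joint law of $(W_{t_i}[\phi_{j}])_{i,j}$ and $(M_{t_i}[\phi_{j}])_{i,j}$ factorizes as a product measure for every choice of $t_1, \dots, t_n$ and $\phi_1, \dots, \phi_m$. Proposition \ref{independenceStochProcessDualSpace} then upgrades this to independence of $W$ and $M$ as $\Phi'_{\beta}$-valued regular processes, and combined with the first step this delivers the full mutual independence claimed in the statement.
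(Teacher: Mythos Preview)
Your assembly of the decomposition and the verification of properties (1)--(5) match the paper exactly. The independence argument, however, is genuinely different.

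The paper's route is shorter: for fixed $\phi_{1},\dots,\phi_{n}\in\Phi$ it invokes the classical L\'{e}vy--It\^{o} decomposition of the $\R^{n}$-valued L\'{e}vy process $\{(L_{t}[\phi_{1}],\dots,L_{t}[\phi_{n}])\}_{t\geq 0}$, identifies the projections of $W$, $M$ and $P$ with the three pieces of that finite-dimensional decomposition (this identification is implicit in the proof of Theorem~\ref{wienerPartLevyItoDecomp}), reads off their independence from the $\R^{n}$ theory, and then lifts via Proposition~\ref{independenceStochProcessDualSpace}. Your approach instead stays in the infinite-dimensional setting: you use Theorem~\ref{independenceLevyAndItsPoissonIntegrals} to separate $P$ from $(W,M)$, then again on $Z$ together with the $\mathcal{M}^{2}(\Phi'_{q})$-convergence of $\widetilde{J}(A)$ to $M$ to separate $W$ from $M$, passing independence through convergence in probability at the level of finite-dimensional distributions. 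Both arguments are correct. The paper's buys brevity by outsourcing the hard work to the $\R^{n}$ case; yours is more self-contained and makes transparent exactly where each independence comes from, at the cost of having to check that $Z$ inherits Assumption~\ref{generalAssumptionsLevyProcess} (it does, after enlarging $\vartheta_{L}$ to accommodate $\goth{m}$) and of the limit-passage step. One small point worth stating explicitly in your write-up: mutual independence of the triple follows from $P\perp(W,M)$ together with $W\perp M$ by the elementary factorization $\Exp[f(W)g(M)h(P)]=\Exp[f(W)g(M)]\,\Exp[h(P)]=\Exp[f(W)]\,\Exp[g(M)]\,\Exp[h(P)]$.
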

\begin{prf}
The decomposition \eqref{levyItoDecomposition} and the properties of its components follow from Theorems \ref{existenceMartingaleLevyItoDecomp} and \ref{wienerPartLevyItoDecomp}, Proposition \ref{propExistenceLargeJumpPartLID}, \eqref{auxiliarProcess1LevyItoDecomp} and \eqref{auxiliarProcess2LevyItoDecomp}. Now we prove the independence of the components in \eqref{levyItoDecomposition}. 

For any $\phi_{1}, \dots, \phi_{n} \in \Phi$, by considering the \Levy-\Ito{} decomposition of the $\R^{n}$-valued \Levy{} process $ \left\{ \left( L_{t}[\phi_{1}],\dots, L_{t}[\phi_{n}] \right) \right\}_{t \geq 0}$, it follows that the $\R^{n}$-valued processes \newline  
$ \{ (W_{t}[\phi_{1}], \dots, W_{t}[\phi_{n}]) \}_{t\geq 0}$, $\left\{ \left( \int_{B_{\rho'}(1)} f \widetilde{N} (t,df)[\phi_{1}], \dots, \int_{B_{\rho'}(1)} f \widetilde{N} (t,df)[\phi_{n}] \right): t\geq 0 \right\}$, \newline
and $\left\{ \left( \int_{B_{\rho'}(1)^{c}} f N (t,df)[\phi_{1}], \dots, \int_{B_{\rho'}(1)^{c}} f N (t,df)[\phi_{n}] \right): t\geq 0 \right\}$ are independent. But because the processes $\{ W_{t} \}_{t \geq 0}$, $\left\{\int_{B_{\rho'}(1)} f \widetilde{N} (t,df): t\geq 0 \right\}$ and $\left\{ \int_{B_{\rho'}(1)} f N (t,df): t\geq 0 \right\}$ are regular, then Proposition \ref{independenceStochProcessDualSpace} shows that they  are independent. 
\end{prf}

As an important by-product of the proof of the L\'{e}vy-It\^{o} decomposition we obtain a L\'{e}vy-Khintchine theorem for the characteristic function of any $\Phi'_{\beta}$-valued L\'{e}vy process.

\begin{theo}[L\'{e}vy-Khintchine theorem for $\Phi'_{\beta}$-valued L\'{e}vy processes]  \label{levyKhintchineFormulaLevyProcessTheorem} \hfill
\begin{enumerate}
\item If $L=\left\{ L_{t} \right\}_{t\geq 0}$ is a $\Phi'_{\beta}$-valued, regular, c\`{a}dl\`{a}g L\'{e}vy process, there exist $\goth{m} \in \Phi'_{\beta}$, a continuous Hilbertian semi-norm $\mathcal{Q}$ on $\Phi$, a L\'{e}vy measure $\nu$ on $\Phi'_{\beta}$ and a continuous Hilbertian semi-norm $\rho$ on $\Phi$ for which $\nu$ satisfies \eqref{integrabilityLevyMeasureLevyProcess}; and such that for each $t \geq 0$, $\phi \in \Phi$, 
\begin{equation} \label{levyKhintchineFormulaLevyProcess}
\begin{split}
& \Exp \left( e^{i L_{t}[\phi] } \right) = e^{t \eta(\phi)}, \quad  \mbox{ with} \\ 
& \eta(\phi)= i \goth{m}[\phi] - \frac{1}{2} \mathcal{Q}(\phi)^{2} + \int_{\Phi'_{\beta}} \left( e^{i f[\phi]} -1 - i f[\phi] \ind{ B_{\rho'}(1)}{f} \right) \nu(d f).  
\end{split}
\end{equation}

\item Conversely, let $\goth{m} \in \Phi'_{\beta}$, $\mathcal{Q}$ be a continuous Hilbertian semi-norm on $\Phi$, and $\nu$ be a $\theta$-regular L\'{e}vy measure on $\Phi'_{\beta}$ satisfying \eqref{integrabilityLevyMeasureLevyProcess} for a continuous Hilbertian semi-norm $\rho$ on $\Phi$. There exists a $\Phi'_{\beta}$-valued, regular, c\`{a}dl\`{a}g L\'{e}vy process $L=\left\{ L_{t} \right\}_{t\geq 0}$ defined on some probability space $\ProbSpace$, unique up to equivalence in distribution, whose characteristic function is given by \eqref{levyKhintchineFormulaLevyProcess}. In particular, $\nu$ is the L\'{e}vy measure  of $L$. 
\end{enumerate}
\end{theo}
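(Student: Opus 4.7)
For part (1) the plan is to apply the L\'evy-It\^o decomposition (Theorem \ref{levyItoDecompositionTheorem}) to $L$ and then multiply characteristic functions across the four independent summands. Writing $L_{t}=t\goth{m}+W_{t}+\int_{B_{\rho'}(1)} f \widetilde{N}(t,df)+\int_{B_{\rho'}(1)^{c}} f N(t,df)$, the mutual independence of the components in the decomposition (Theorem \ref{levyItoDecompositionTheorem}) gives
\begin{equation*}
\Exp\left(e^{iL_t[\phi]}\right) = e^{it\goth{m}[\phi]} \cdot \Exp\left(e^{iW_t[\phi]}\right) \cdot \Exp\left(e^{i\int_{B_{\rho'}(1)} f \widetilde N(t,df)[\phi]}\right) \cdot \Exp\left(e^{i\int_{B_{\rho'}(1)^c} fN(t,df)[\phi]}\right).
\end{equation*}
The Wiener factor is $\exp(-t\mathcal{Q}(\phi)^2/2)$ by \eqref{charactFunctionWienerProcess} (the Wiener part in the decomposition is mean zero), the small-jumps factor is supplied by \eqref{charactFunctionMartingaleLevyItoDecomp}, and the large-jumps factor by \eqref{charactFunctionCompoundPoissonLID}. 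Collecting exponents and noting that $\ind{B_{\rho'}(1)}{f}=0$ on $B_{\rho'}(1)^{c}$, the two jump integrals merge into a single integral over $\Phi'_\beta$ against $\nu$, producing \eqref{levyKhintchineFormulaLevyProcess}. That $\nu$ is a L\'evy measure on $\Phi'_\beta$ satisfying \eqref{integrabilityLevyMeasureLevyProcess} is Theorem \ref{associatedMeasureIsLevyMeasure}.

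For part (2) the plan is to synthesise the three L\'evy-It\^o-type components directly from the data $(\goth{m},\mathcal{Q},\nu,\rho)$ on a common probability space and add them. First, Theorem \ref{existenceWienerProcess} produces a $\Phi'_\beta$-valued zero-mean Wiener process $W$ with covariance functional $\mathcal{Q}$. Second, since $\restr{\nu}{B_{\rho'}(1)^c}$ is a finite Radon measure on $\Phi'_\beta$ by Definition \ref{defiLevyMeasureDualNuclearSpace}(3), a standard construction yields an independent stationary Poisson point process with this characteristic measure, and Proposition \ref{propPropertiesPoissonIntegrals} then makes $C_t=\int_{B_{\rho'}(1)^c} f N(t,df)$ into a $\Phi'_\beta$-valued regular c\`adl\`ag L\'evy process. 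Third, for the small-jumps martingale $M$ the $\theta$-regularity of $\nu$ lets us regard $\restr{\nu}{B_{\rho'}(1)}$ as a Borel measure on the separable space $\Phi'_\theta = \bigcup_n \Phi'_{p_n}$ on which a Poisson random measure $\widetilde N$ on $[0,\infty)\times(B_{\rho'}(1)\setminus\{0\})$ with intensity $dt\otimes\restr{\nu}{B_{\rho'}(1)}$ can be constructed in the usual point-process fashion; forming compensated integrals $\widetilde J(A_n)$ over an exhausting sequence $A_n\nearrow B_{\rho'}(1)\setminus\{0\}$ with $\nu(A_n)<\infty$ and invoking the Cauchy-in-$\mathcal{M}^2(\Phi'_q)$ argument of Theorem \ref{existenceMartingaleLevyItoDecomp} (which works thanks to \eqref{integrabilityLevyMeasureLevyProcess}) produces the required limit $M$ in $\Phi'_q$ for a continuous Hilbertian seminorm $q\ge\rho$ with $i_{\rho,q}$ Hilbert-Schmidt. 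Arranging $W$, $M$, $C$ independent and setting $L_t=t\goth{m}+W_t+M_t+C_t$ gives a sum of independent $\Phi'_\beta$-valued regular c\`adl\`ag L\'evy processes, hence itself such a process; its characteristic function is the product of the summands', which by the calculation in part (1) equals $e^{t\eta(\phi)}$, and by construction $\nu$ is the L\'evy measure of $L$. Uniqueness in distribution is then immediate: \eqref{levyKhintchineFormulaLevyProcess} determines each one-dimensional marginal (via Minlos' theorem, applicable since $\mu_t$ is Radon by Proposition \ref{propLevyMeasuresAreRadon}), and independent stationary increments fix all finite-dimensional distributions.

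The main obstacle I anticipate lies in the third component of part (2), namely constructing the small-jumps Poisson random measure from $\nu$ alone, without an a priori L\'evy process whose jumps provide it. The key is to exploit $\theta$-regularity to descend to the separable space $\Phi'_\theta$, where $\restr{\nu}{B_{\rho'}(\varepsilon)^c}$ is a finite Radon measure for every $\varepsilon>0$ and the standard point-process construction applies directly; one then has to check that the resulting compensated integrals still behave correctly when viewed in the enveloping space $\Phi'_\beta$. Once this is in place, the $\mathcal{M}^2(\Phi'_q)$-Cauchy estimates underpinning Theorem \ref{existenceMartingaleLevyItoDecomp} transfer essentially verbatim, and the remainder of the construction is routine.
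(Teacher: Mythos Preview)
Your proposal is correct and follows essentially the same approach as the paper: part (1) multiplies characteristic functions across the independent L\'evy--It\^o components, and part (2) synthesises a Wiener piece via Theorem \ref{existenceWienerProcess}, Poisson jump pieces from a point process with characteristic measure $\nu$ (the paper builds a single Poisson point process for all of $\nu$ via Ikeda--Watanabe and then splits, whereas you build the small- and large-jump pieces separately, but this is a cosmetic variation), and then re-runs the $\mathcal{M}^2(\Phi'_q)$-Cauchy argument of Theorem \ref{existenceMartingaleLevyItoDecomp}. One small slip: in your uniqueness argument the Radonness of $\mu_t$ comes from regularity of $L$ via Theorem \ref{theoCharacterizationRegularRV}, not from Proposition \ref{propLevyMeasuresAreRadon} (which concerns L\'evy measures, not the time-$t$ laws).
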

\begin{prf} If $L$ is a $\Phi'_{\beta}$-valued, regular, c\`{a}dl\`{a}g L\'{e}vy process then \eqref{levyKhintchineFormulaLevyProcess} follows from the independence of the random components of the decomposition \eqref{levyItoDecomposition}, \eqref{charactFunctionWienerProcess} (recall here that $W$ has mean zero and covariance functional $\mathcal{Q}$), \eqref{charactFunctionMartingaleLevyItoDecomp} and \eqref{charactFunctionCompoundPoissonLID}. 

For the converse, assume we have $\goth{m}$, $\mathcal{Q}$, $\nu$ and $\rho$ with the properties in the statement of the theorem. 
First as $\nu$ is a $\sigma$-finite Borel measure on $\Phi'_{\beta}$ (Proposition \ref{propLevyMeasuresAreRadon}), there exist a stationary Poisson point processes $p= \{ p(t) \}_{t \geq 0}$ on $(\Phi'_{\beta},\mathcal{B}(\Phi'_{\beta}))$ with associated Poisson random measure $R$, $p$ and $R$ unique up to equivalence in distribution, such that $\nu$ is the characteristic measure of $p$ (see \cite{IkedaWatanabe}, Theorem I.9.1, p.44; see also  \cite{Sato}, Proposition 19.4, p.122). If $ U_{n} \in \mathcal{B}(\Phi'_{\beta})$, for $n \in \N$, are disjoint, $\Phi'_{\beta}=\bigcup_{n} U_{n}$ and $\nu(U_{n})< \infty$ for every $n \in \N$, the point process $p$ can be constructed from a sequence of stopping times $\tau^{(n)}_{i}$ with exponential distribution with parameter $\nu(U_{n})$ and a sequence $\xi^{(n)}_{i}$ of $\Phi'_{\beta}$-valued random variables with probability  distribution $\nu(\cdot)/\nu(U_{n})$ 
(see details in \cite{IkedaWatanabe}, Theorem I.9.1, p.44). Because $\nu$ is concentrated on $\Phi'_{\theta}$ for a weaker countably Hilbertian topology $\theta$ on $\Phi$ (Lemma \ref{lemmRestrLevyMeasureIsRadon}), it follows  that the random variables $\xi^{(n)}_{i}$ are regular. But as $p$ takes the values of these random variables (indeed we have  $p(\tau^{(n)}_{1}+ \dots + \tau^{(n)}_{i})=\xi^{(n)}_{i}$ for $n,i \in \N$), then $p$ is a regular process in $\Phi'_{\beta}$.       

Now note that in the proof of Theorem \ref{existenceMartingaleLevyItoDecomp}, we only used the fact that the L\'{e}vy measure $\nu$ of a L\'{e}vy process $L$ satisfies the integrability condition in  \eqref{integrabilityLevyMeasureLevyProcess}, and that the Poisson integral with respect to the Poisson random measure $N$ of $L$ exists and satisfies the properties given in Section \ref{subsectionPRMPI}. Since we can define Poisson integrals with respect to the Poisson random measure $R$ of $p$ satisfying the properties given in Section \ref{subsectionPRMPI} (here we use that $p$ is a regular process), and $\nu$ satisfies \eqref{integrabilityLevyMeasureLevyProcess}, we can replicate the arguments in the proof of Theorem \ref{existenceMartingaleLevyItoDecomp} to conclude that there exists a continuous Hilbertian semi-norm $q$ on $\Phi$ such that $\rho \leq q$ and the map $i_{\rho,q}$ is Hilbert-Schmidt, and a $\Phi'_{q}$-valued mean-zero, square integrable, c\`{a}dl\`{a}g L\'{e}vy process $\widetilde{M}=\{ \widetilde{M}_{t}\}_{t \geq 0}$ with characteristic function given by \eqref{charactFunctionMartingaleLevyItoDecomp}. 

On the other hand, because from \eqref{integrabilityLevyMeasureLevyProcess}  we have $\nu(B_{\rho'}(1)^{c})<\infty$, it follows from Proposition \ref{propExistenceLargeJumpPartLID} that there exists a
$\Phi'_{\beta}$-valued, regular, c\`{a}dl\`{a}g L\'{e}vy process 
 $\widetilde{J} = \{ \widetilde{J}_{t} \}_{t \geq 0}$, where $\widetilde{J}_{t}=\int_{B_{\rho'}(1)^{c}} f R(t,df)$ as given in \eqref{definitionCompoundPoissonLID} (with $N$ replaced by $R$), with characteristic function  \eqref{charactFunctionCompoundPoissonLID}. 
Moreover, from Theorem \ref{existenceWienerProcess} there exists a $\Phi'_{\beta}$-valued Wiener process $\widetilde{W}= \{\widetilde{W}_{t} \}_{t \geq 0}$, unique up to equivalence in distribution, such that $\goth{m}$ and $\mathcal{Q}$ are the mean and the covariance functional of $\widetilde{W}$. Hence, $\widetilde{W}$ has characteristic function given by \eqref{charactFunctionWienerProcess}. 

We can assume without loss of generality that $ \widetilde{W}$, $\widetilde{M}$ and $ \widetilde{J}$ are independent $\Phi'_{\beta}$-valued process defined on some probability space $\ProbSpace$ (see e.g. \cite{Kallenberg}, Corollary 6.18, p.117).  Hence, if we define $L=\left\{ L_{t} \right\}_{t\geq 0}$, where for each $t \geq 0$, $ L_{t}= \widetilde{W}_{t}+ \widetilde{M}_{t}+\widetilde{J}_{t}$, then $L$ being the sum of a finite number of independent c\`{a}dl\`{a}g L\'{e}vy process is also a $\Phi'_{\beta}$-valued, c\`{a}dl\`{a}g L\'{e}vy process. It is also unique up to equivalence in distribution, and for each $t \geq 0$, $L_{t}$ has characteristic function given by \eqref{levyKhintchineFormulaLevyProcess}. 
\end{prf}

\section{L\'{e}vy-Khintchine theorem for infinitely divisible measures} \label{sectionLKTIDM}

Our final result is the L\'{e}vy-Khintchine formula for infinitely divisible measures in the dual of a nuclear space. This result is not covered by the work of Dettweiler \cite{Dettweiler:1976} (see Section \ref{sectionExampCommen}).

\begin{theo}[L\'{e}vy-Khintchine theorem] \label{levyKhintchineFormula} 
Let $\mu \in \goth{M}_{R}^{1}(\Phi'_{\beta})$. Then:
\begin{enumerate}
\item If $\Phi$ is also a barrelled space and if $\mu$ is  infinitely divisible, then there exists $\goth{m} \in \Phi'_{\beta}$, a continuous Hilbertian semi-norm $\mathcal{Q}$ on $\Phi$, a L\'{e}vy measure $\nu$ on $\Phi'_{\beta}$ and a continuous Hilbertian semi-norm $\rho$ on $\Phi$ for which $\nu$ satisfies \eqref{integrabilityLevyMeasureLevyProcess}; such that the characteristic function of $\mu$ satisfies the following formula for every $\phi \in \Phi$: 
\begin{equation} \label{levyKhintchineFormulaEquation}
\widehat{\mu}(\phi)=\exp \left[ i \goth{m}[\phi] - \frac{1}{2} \mathcal{Q}(\phi)^{2} + \int_{\Phi'_{\beta}} \left( e^{i f[\phi]} -1 - i f[\phi] \ind{ B_{\rho'}(1)}{f} \right) \nu(d f) \right].
\end{equation}
\item Conversely, let $\goth{m} \in \Phi'_{\beta}$, $\mathcal{Q}$ be a continuous Hilbertian semi-norm on $\Phi$, and $\nu$ be a $\theta$-regular L\'{e}vy measure on $\Phi'_{\beta}$ satisfying \eqref{integrabilityLevyMeasureLevyProcess} for a continuous Hilbertian semi-norm $\rho$ on $\Phi$. If $\mu$ has characteristic function given by \eqref{levyKhintchineFormulaEquation}, then $\mu$ is infinitely divisible. 
\end{enumerate}
\end{theo}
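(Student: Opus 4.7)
The plan is to reduce both parts of Theorem \ref{levyKhintchineFormula} to the already-established L\'{e}vy-Khintchine formula for $\Phi'_{\beta}$-valued L\'{e}vy processes (Theorem \ref{levyKhintchineFormulaLevyProcessTheorem}) via the one-to-one correspondence between infinitely divisible Radon probability measures on $\Phi'_{\beta}$ and $\Phi'_{\beta}$-valued L\'{e}vy processes. The two directions of that correspondence are recorded in Theorems \ref{theoLevyDefinesConvSemig} and \ref{theoInfiDivisMeasuImpliLevyProc}, and all the delicate analytic work (Poisson integrals, the small-jump martingale, the Wiener part, independence of the components) has already been absorbed into Theorem \ref{levyKhintchineFormulaLevyProcessTheorem}. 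Consequently the proof of Theorem \ref{levyKhintchineFormula} is essentially a bookkeeping step at $t=1$.

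For part (1), I assume $\Phi$ is barrelled nuclear and $\mu \in \goth{I}(\Phi'_{\beta})$. Theorem \ref{theoInfiDivisMeasuImpliLevyProc} produces a $\Phi'_{\beta}$-valued regular c\`{a}dl\`{a}g L\'{e}vy process $L=\{L_{t}\}_{t \geq 0}$ with $\mu_{L_{1}}=\mu$. Applying Theorem \ref{levyKhintchineFormulaLevyProcessTheorem}(1) to $L$ yields characteristics $(\goth{m}, \mathcal{Q}, \nu, \rho)$ with $\nu$ a L\'{e}vy measure on $\Phi'_{\beta}$ satisfying \eqref{integrabilityLevyMeasureLevyProcess}, and such that for each $t \geq 0$ and $\phi \in \Phi$ the characteristic function of $L_{t}$ is $\exp(t\eta(\phi))$ with $\eta$ as in \eqref{levyKhintchineFormulaLevyProcess}. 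Setting $t=1$ gives exactly \eqref{levyKhintchineFormulaEquation} for $\widehat{\mu}$.

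For part (2), given the data $(\goth{m}, \mathcal{Q}, \nu, \rho)$ with $\nu$ a $\theta$-regular L\'{e}vy measure satisfying \eqref{integrabilityLevyMeasureLevyProcess}, Theorem \ref{levyKhintchineFormulaLevyProcessTheorem}(2) supplies a $\Phi'_{\beta}$-valued regular c\`{a}dl\`{a}g L\'{e}vy process $\tilde{L}=\{\tilde{L}_{t}\}_{t\geq 0}$ on some probability space such that $\widehat{\mu_{\tilde{L}_{1}}}$ is precisely the right-hand side of \eqref{levyKhintchineFormulaEquation}. Thus $\widehat{\mu_{\tilde{L}_{1}}}(\phi) = \widehat{\mu}(\phi)$ for every $\phi \in \Phi$. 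Since both $\mu_{\tilde{L}_{1}}$ and $\mu$ are Radon probability measures on $\Phi'_{\beta}$, and a Radon probability measure on $\Phi'_{\beta}$ is determined by its values on the cylindrical $\sigma$-algebra $\mathcal{C}(\Phi')$ and hence by its characteristic function (compare the uniqueness half of Minlos' theorem invoked in Section \ref{subSectionCylAndStocProcess}), we conclude $\mu = \mu_{\tilde{L}_{1}}$. Finally, Theorem \ref{theoLevyDefinesConvSemig} guarantees that $\mu_{\tilde{L}_{1}} \in \goth{I}(\Phi'_{\beta})$, so $\mu$ is infinitely divisible.

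There is no real obstacle once Theorems \ref{theoInfiDivisMeasuImpliLevyProc} and \ref{levyKhintchineFormulaLevyProcessTheorem} are in hand; the only point that requires a brief justification is the uniqueness claim in (2), namely that equality of characteristic functions forces equality of two Radon probability measures on $\Phi'_{\beta}$. This is standard: the characteristic function determines the finite-dimensional distributions $\mu \circ \pi_{\phi_{1},\ldots,\phi_{n}}^{-1}$ by the classical uniqueness theorem on $\R^{n}$, hence $\mu$ and $\mu_{\tilde{L}_{1}}$ agree on $\mathcal{C}(\Phi')$, and being Radon they agree on $\mathcal{B}(\Phi'_{\beta})$.
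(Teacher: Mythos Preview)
Your proposal is correct and follows essentially the same approach as the paper: both parts are reduced to Theorem \ref{levyKhintchineFormulaLevyProcessTheorem} via the correspondence Theorems \ref{theoInfiDivisMeasuImpliLevyProc} and \ref{theoLevyDefinesConvSemig}, evaluated at $t=1$. The only difference is that you spell out the uniqueness step in part (2) (equality of characteristic functions of Radon probability measures forces equality of the measures), which the paper leaves implicit when it asserts $\mu_{L_{1}}=\mu$.
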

\begin{prf}
First suppose that $\mu$ is infinitely divisible. Then, it follows from Theorem \ref{theoInfiDivisMeasuImpliLevyProc} that there exists a $\Phi'_{\beta}$-valued, regular, c\`{a}dl\`{a}g L\'{e}vy process $L=\left\{ L_{t} \right\}_{t\geq 0}$ such that $\mu_{L_{1}}=\mu$. Then the existence of $\mu$, $\mathcal{Q}$, $\nu$ and $\rho$ follows from Theorem \ref{levyKhintchineFormulaLevyProcessTheorem}(1). Furthermore the fact that $\mu$ satisfies \eqref{levyKhintchineFormulaEquation} follows from taking $t=1$ in \eqref{levyKhintchineFormulaLevyProcess} and because $\mu_{L_{1}}=\mu$. 

Conversely, suppose that $\mu$ satisfies \eqref{levyKhintchineFormulaEquation} for the given $\mu$, $\mathcal{Q}$, $\nu$ and $\rho$. Then it follows from Theorem \ref{levyKhintchineFormulaLevyProcessTheorem}(2) that there exists a $\Phi'_{\beta}$-valued, regular, c\`{a}dl\`{a}g L\'{e}vy process $L=\left\{ L_{t} \right\}_{t\geq 0}$ such that $\mu_{L_{1}}=\mu$. But then Theorem \ref{theoLevyDefinesConvSemig} shows that $\mu$ is infinitely divisible. 
\end{prf}

\begin{rema}
If $\Phi$ is a barrelled nuclear space, the assumption that the L\'{e}vy measure $\nu$ is $\theta$-regular in Theorems \ref{levyKhintchineFormulaLevyProcessTheorem}(2) and  \ref{levyKhintchineFormula}(2) can be disposed because every L\'{e}vy measure on $\Phi$ is $\theta$-regular (see Proposition  \ref{propLevyMeasuresAreRadon}). 
\end{rema}

\section{Examples, Remarks and Comparisons} \label{sectionExampCommen}

All throughout this paper we have considered L\'{e}vy processes and infinitely divisible measures defined on the dual of a nuclear space. In this section we provide examples of nuclear spaces arrising in applications and give emphasis on examples which possess the additional structure of barrelledness.  Furthermore, we provide some additional remarks on the results obtained throught this paper and we also make comparison of our results with those obtained by other authors.

\textbf{Examples:} Many spaces of functions widely used in analysis are examples of (complete) barrelled nuclear spaces. We can mention for example  (for details see \cite{Pietsch, Schaefer, Treves}) the spaces $\mathscr{E}_{K} \defeq \mathcal{C}^{\infty}(K)$ ($K$: compact subset of $\R^{d}$), $\mathscr{E}\defeq \mathcal{C}^{\infty}(\R^{d})$, the rapidly decreasing functions $\mathscr{S}(\R^{d})$, the space of harmonic functions $\mathcal{H}(U)$ ($U$: open subset of $\R^{d}$), and the space of test functions $\mathscr{D}(U) \defeq \mathcal{C}_{c}^{\infty}(U)$ ($U$: open subset of $\R^{d}$). Their (strong) dual spaces $\mathscr{E}'_{K}$, $\mathscr{E}'$, $\mathscr{S}'(\R^{d})$, $\mathcal{H}'(U)$, $\mathscr{D}'(U)$ are also (complete) barrelled nuclear spaces. Other examples are the space of polynomials $\mathcal{P}_{n}$ in $n$-variables, the space of real-valued sequences $\R^{\N}$ (with direct sum topology), and the space of real-analytic functions $\mathcal{A}(V)$ ($V$: closed subset of $\R^{d}$, see \cite{HogbeNlendMoscatelli}). 

There are interesting examples of nuclear spaces that are not (or might not be)   barrelled. As examples we have the space $\R^{D}$ equipped with its product topology ($D$ arbitrary set, see \cite{Treves}) and the nuclear K\"{o}the sequence spaces (see \cite{HogbeNlendMoscatelli, Jarchow}). Other examples are the space of holomorphic functions defined on a  (quasi-)complete dual nuclear space (e.g. $\mathcal{H}(\mathscr{D}'(\R^{d}))$, see \cite{Dineen}), the space of continuous linear operators between a semi-reflexive dual nuclear space into a nuclear space (e.g. the space $\mathscr{D}'(U; \R^{\N}) \defeq \mathcal{L}(\mathcal{C}^{\infty}_{c}(U),\R^{\N})$ of distributions with values in $\R^{\N}$, $U \subseteq \R^{n}$ open), and tensor products of  nuclear spaces (e.g. the non-barrelled nuclear space $\mathscr{D}(\R) \widehat{\otimes} \mathscr{S}(\R^{d}) $, or the the space of holomorphic functions with values in the space of distributions $\mathcal{H}(U;\mathscr{D}'(\R^{d})) \cong \mathcal{H}(U) \widehat{\otimes} \mathscr{D}'(\R^{d})$, $U \subseteq \R^{n}$ open); for references see \cite{Schaefer, Treves}. A somewhat more abstract example of a (complete) nuclear space that is not barrelled is the following: if $E$ is an infinite dimensional Banach space, it is possible to show that $E$ is the strong dual of some complete nuclear space $\Phi$ that is not barrelled (see Corollary 1 of Theorem IV.4.3.3 in  \cite{HogbeNlendMoscatelli}). 

%Note that $\Phi$ cannot be barrelled because if that is the case, then $\Phi$ would be reflexible and  then $\Phi=E'_{\beta}$ (see \cite{Schaefer}, Theorems III.7.2 and IV.5.6). But this equality is impossible because in that case $\Phi$ would be both nuclear and Banach, and this is only possible if $\Phi$ is finite dimensional (see \cite{Pietsch}, Theorem 4.4.14) so we get a contradiction.

%The study of L\'{e}vy-Khintchine formula for infinitely divisible measures defined on a topological vector space has a long story and many authors contribute to it. We will try to mention some of these works, but the reader is warned that this list is not exhaustive. 
 
\textbf{Remarks and comparisons:}
The L\'{e}vy-Khintchine formula for infinitely divisible measures defined on a separable Hilbert space  was proven by Varadhan in \cite{Varadhan:1962}.  In \cite{Fernique:1967}, Fernique proved the existence of a L\'{e}vy-Khintchine formula for infinitely divisible measures defined on $\mathcal{D}'$. Later, in  \cite{Tortrat:1967}, Tortrat studied the representation  of an infinitely divisible measure defined on a topological vector spaces as the convolution of a Gaussian measure and a ``generalized'' Poisson measure, under different additional assumptions on the underlying space. This study was continued by the same author in \cite{Tortrat:1969}, moreover, a L\'{e}vy-Khintchine formula was proven for infinitely divisible measures defined on a locally convex space that are tight with respect to a family of compact Hilbertian subsets. In particular, the result of Tortrat extended the previous work of Varadhan and Fernique. 

However, it was Dettweiler in \cite{Dettweiler:1976} who proved the existence of a L\'{e}vy-Khintchine formula for infinitely divisible measures defined on a complete locally convex space. There, Dettweiler proved that, for such a general context, the L\'{e}vy measure of an infinitely divisible measure no longer possesses the square integrability condition with respect to a Hilbertian seminorm obtained by Varadhan and Fernique. Nevertheless, Dettweiler showed that if the underlying locally convex space is a complete Badrikian space (this last property means that the space possesses a fundamental system of Hilbertian, compact, convex, balanced subsets; such a class of spaces was introduced in \cite{Badrikian}, Expos\'{e} no.10), then a characterization of L\'{e}vy measures in terms of square integrability with respect to a Hilbertian seminorm also holds. The result of Dettweiler hence contains those of Varadhan, Fernique and Tortrat.    

The L\'{e}vy-Khintchine formula obtained by Dettweiler on a complete Badrikian space 
coincides with \eqref{levyKhintchineFormulaEquation}. However, here it is very important to stress the fact that in Theorem \ref{levyKhintchineFormula}, the space $\Phi'_{\beta}$ is not necessary complete nor a Badrikian space. Hence we have obtained a L\'{e}vy-Khintchine formula that works in a case that is not covered by the formula proved by Dettweiler in \cite{Dettweiler:1976}. Similarly, we have been able to show that the L\'{e}vy measure of an infinitely divisible measure defined on $\Phi'_{\beta}$ is characterized by \eqref{integrabilityLevyMeasureLevyProcess}, and hence, such a characterization is possible beyond the context of complete Badrikian spaces. 

It is interesting the fact that our proof of the L\'{e}vy-Khintchine formula was carried out via the connection between infinitely divisible measures, convolution semigroups of measures and L\'{e}vy processes. This connection is very well known if the space is finite dimensional (see e.g. \cite{ApplebaumLPSC, Sato}), and can be extended to a complete Souslin locally convex space using for example the results of Schwartz in \cite{Schwartz:1977} (see \cite{Baumgartner:2015}). Observe that in our context the fact that an infinitely divisible measure can be embedded in a continuous time convolution semigroup follows from arguments essentially due to Siebert \cite{Siebert:1974, Siebert:1976}, that are of very general nature. However, to the extent of our knowledge our Theorem \ref{theoInfiDivisMeasuImpliLevyProc} is the first that shows the correspondence of a L\'{e}vy process with a given infinitely divisible measure on the strong dual $\Phi'_{\beta}$ of a barrelled nuclear space $\Phi$ (observe that the space $\Phi'_{\beta}$ is in general not a Souslin space, hence we have a situation that is not contained in the work \cite{Schwartz:1977}). Here, we want to stress the fact that the role played by our assumption of nuclearity of the space $\Phi$ is present in the use of the regularization theorem for cylindrical L\'{e}vy processes (Theorem \ref{theoCylindrLevyProcessHaveLevyCadlagVersion}). Such result result does not work in general for other classes of locally convex spaces (e.g. in Hilbert spaces) without impossing additional assumptions on equicontinuity on a weaker topology (e.g. the Sazonov's topology). Moreover, the assumption of barrelledness is based on the fact that it implies that uniform tightness of a collection of measures implies the equicontinuity of its characteristic functions, and this last condition is essential for the use of the regularization theorem.  

It is a very interesting fact that if one consider specific classes of infinitely divisible measures, for example, the Poisson and the stable measures, one can obtain a L\'{e}vy-Khintchine formula and some integrability characterizations for its L\'{e}vy measures for general (complete) locally convex spaces (see e.g. \cite{ChungRajputTortrat:1982, Rajput:1977, Tortrat:1976, Tortrat:1977}). The same degree of  generality on the space can be obtained if one considers infinitely divisible measures on cones (see e.g. \cite{Dettweiler:1976-1, PerezAbreuRosinski:2007}). 

%Characterizations of infinitely divisible measures in Hilbert spaces through the nonnegative defiteness and continuity on the $S$-topology of its characteristic functionals were carried out by Prakasa Rao in \cite{PrakasaRao:1970} (a characterization by the same author but within the context of locally compact abelian groups is given in \cite{PrakasaRao:1971}). A result of similar nature was obtained by the same author in \cite{PrakasaRao:1970-1} for infinitely divisible measures defined on locally convex spaces $E$ whose dual is nuclear under the topology of uniform convergence on compact convex subsets of $E$.   

The existence of c\`{a}dl\`{a}g versions and the  L\'{e}vy-It\^{o} decomposition for L\'{e}vy processes taking values in the dual of (certain classes of) nuclear spaces have been considered firstly by \"{U}st\"{u}nel in \cite{Ustunel:1984}, and more recently by Baumgartner in \cite{Baumgartner:2015}. In \cite{Ustunel:1984} it is assumed that $\Phi$ is a complete, separable, nuclear space (and barrelled; this is not explicitly assumed but is used in the main arguments) and that $\Phi'_{\beta}$ is nuclear and Souslin. In  \cite{Baumgartner:2015}, the assumptions are that the space where the L\'{e}vy processes takes values is a complete, Souslin, locally convex space which possesses  a fundamental system of Banach, compact, convex, separable, subsets (as for example a Badrikian space). In the context of duals of nuclear spaces, the assumptions in \cite{Baumgartner:2015} reduce to those in \cite{Ustunel:1984}. 

Observe that for our Theorems \ref{theoConditCadlagVersionLevyProc} and  \ref{levyItoDecompositionTheorem} on the existence of c\`{a}dl\`{a}g versions and the L\'{e}vy-It\^{o} decomposition, we have not assumed any property on the space $\Phi$ other than being nuclear, and we have made no assumptions on its dual space $\Phi'_{\beta}$. We have made only assumptions on the L\'{e}vy process (see Assumptions \ref{generalAssumptionsLevyProcess}) but these are always satisfied if $\Phi$ is additionally barrelled. Hence, our results generalize those in \cite{Baumgartner:2015, Ustunel:1984}. Indeed, we have obtained a more detailed description of the components in the decomposition. Furthermore, in both works \cite{Baumgartner:2015, Ustunel:1984} the authors used the L\'{e}vy-Khintchine formula of Dettweiler as a fundamental input, but in our case, we have obtained this formula as a by-product. Finally, it is worth to mentioning that, to the extent of our knowledge, this work is the first that considers regularization of cylindrical L\'{e}vy processes in duals of nuclear spaces. The result in Theorem \ref{theoCylSemiGroupImpliesCylLevyProc} that relates some families of cylindrical measures  with a cylindrical L\'{e}vy process is also new, and it is very interesting from the point of view that it holds for general locally convex spaces. 
    
\textbf{Acknowledgements} { The author would like to thank  David Applebaum for all his helpful comments and suggestions. 
Thanks also to the The University of Costa Rica for providing financial support through the grant 820-B6-202	``Ecuaciones diferenciales en derivadas parciales en espacios de dimensi\'{o}n infinita''. Some earlier parts of this work were carried out at The University of Sheffield and the author wishes to express his gratitude. Many thanks are also due to the referees, who made very helpful remarks.
}

\end{document}